\documentclass[10pt,twoside, a4paper, english, reqno]{amsart}
\usepackage[dvips]{epsfig}
\usepackage{amscd}
\usepackage{a4wide}
\usepackage{amssymb}
\usepackage{amsthm}
\usepackage{comment}
\usepackage{amsmath}
\usepackage{bm} 
\usepackage{latexsym}
\usepackage[a4paper,inner=2cm,outer=2cm,top=2.5cm,bottom=2.5cm]{geometry}

\usepackage{upref}
\usepackage{enumerate}
\usepackage{xcolor}
\usepackage{mathtools}
\usepackage[ocgcolorlinks, linkcolor=blue,citecolor=red,urlcolor=blue]{hyperref}
\usepackage[foot]{amsaddr}
\newcommand{\mb}{\mathbb}
\newcommand{\D}{\mathrm{d}}
\newcommand{\mc}{\mathcal}
\newcommand{\m}{\mbox}
\newcommand{\ld}{\lambda}
\newcommand{\og}{\omega}

\newcommand{\rt}{\sqrt}

\newcommand{\dd}{\partial}
\newcommand{\tb}{\textbf}

\newcommand{\nn}{\nonumber}

\newcommand{\al}{\alpha}

\newcommand{\iy}{\infty}

\newtheorem{theorem}{Theorem}[section]
\newtheorem{proposition}[theorem]{Proposition}
\newtheorem{lemma}[theorem]{Lemma}

\newtheorem{remark}[theorem]{Remark}
\newtheorem{corollary}[theorem]{Corollary}   
\numberwithin{figure}{section}
\numberwithin{table}{section}
\allowdisplaybreaks	
\newcommand{\R}{\mathbb{R}}

\newcommand{\HS}{\mathbf{H}}
\newcommand{\LS}{\mathbf{L}}
\newcommand{\CS}{\mathbf{C}}

\newcommand{\Lc}{\mathcal{L}}

\newcommand{\lb}{\left(}

\newcommand{\rb}{\right)}

\renewcommand{\O}{\Omega}
\renewcommand{\o}{\omega}
 \mathtoolsset{showonlyrefs}
  \def\diam{ {{\rm diam}}}
    \def\div{ {{\rm div}}}
\def\diag{ {\rm diag} }
\def\supp{ {\rm supp} }

\newcommand{\gs}{\overrightarrow{g}}
\newcommand{\hs}{\overrightarrow{h}}
\newcommand{\us}{\overrightarrow{u}}
\newcommand{\vs}{\overrightarrow{v}}

\newcommand{\Sb}{\mathbb{S}}
\numberwithin{equation}{section} \allowdisplaybreaks
\usepackage[normalem]{ulem}	
\usepackage{cancel}
\title[Stability estimate of matrix potential]{Stability in the determination of the coefficients for coupled relativistic Schr\"odinger system
}
\author[N. Kumar, P. Kumar and  T. Sarkar]
		{Nitesh Kumar$^\diamond$, 
        Parveen Kumar$^\dagger$
        and Tanmay Sarkar$^\ddagger$}	
		\date{}
		\address[]{$^\diamond$Department of Mathematics, 
			Indian Institute of Technology Jammu,
			NH-44 Bypass Road, Nagrota PO, Jagti, 
			Jammu - 181221, INDIA.\\
             {Department of Applied Science and Humanities, Government Engineering College Gopalganj, Dist.-Gopalganj, Bihar-841501, India.}}
		\email[]{2019rma0004@iitjammu.ac.in,\,{nitesh.maths.dstte@bihar.gov.in}}
        \address[]{$^{\dagger}$Department of Mathematics,  Indian Institute of Technology Ropar, Rupnagar, Punjab-140001, INDIA.}
		\email[]{\tt parveen.24maz0013@iitrpr.ac.in}
		\address[]{$^\ddagger$Department of Mathematics, 
			Indian Institute of Technology Jammu,
			NH-44 Bypass Road, Nagrota PO, Jagti, 
			Jammu - 181221, INDIA.}
		\email[]{tanmay.sarkar@iitjammu.ac.in}

\begin{document}
\begin{abstract} 
In this article, we prove a stability result for the identification of the coefficients appearing in a coupled relativistic Schr\"odinger system posed on a bounded domain of $\mathbb{R}^{1+n}$, where $n \geq 3$. More precisely, by utilizing input-output operator, where Neumann boundary data are available only on a subset of the boundary, and constructing an appropriate geometric optics solution, we derive a stability estimate of $\log$-$\log$ type for the matrix-valued first-order coefficients. This result is subsequently used to obtain a $\log$-$\log$-$\log$ type stability estimate for the zeroth-order matrix-valued potential.

\vspace{0.2 cm}
\textbf{Keywords:} Inverse problem,  stability estimate, relativistic Schr\"odinger system, input-output map, matrix-valued potential, geometric optics (GO) solutions.

\vspace{0.2 cm}
\textbf{Mathematics subject classification (2020):} Primary: 35L53, 35R30; Secondary: 35L20. 
\end{abstract}
\maketitle

\section{Introduction}
\subsection{Mathematical model} Let $\O\subset \mathbb{R}^n$ be a bounded domain with smooth boundary $\partial \O$, where $n\geq 3.$
For $T>\diam(\O)$, let us introduce the coupled relativistic Schr\"odinger operator $\mathcal{L}_{\mathcal{A},\mathcal{B},q}$ in $Q:=(0,T)\times \O$, given by
\begin{align}\label{Def: Govern operator}
\mathcal{L}_{\mathcal{A},\mathcal{B},q}\overrightarrow {u}(t,x) :
={\displaystyle  \begin{bmatrix} (\partial_t+A^1_0(t,x))^2u_1(t,x)-\sum\limits_{j=1}^n(\partial_j+A^1_j(t,x))^2u_1(t,x)+\sum\limits_{j=1}^{m}q_{1j}(t,x)u_j(t,x) \\  (\partial_t+A^2_0(t,x))^2u_2(t,x)-\sum\limits_{j=1}^n(\partial_j+A^2_j(t,x))^2u_2(t,x)+\sum\limits_{j=1}^{m} q_{2j}(t,x)u_j(t,x) \\ \vdots \\ (\partial_t+A^m_0(t,x))^2u_m(t,x)-\sum\limits_{j=1}^n(\partial_j+A^m_j(t,x))^2u_m(t,x)+\sum\limits_{j=1}^{m} q_{mj}(t,x)u_j(t,x) \end{bmatrix}},
\end{align}
where $\displaystyle \overrightarrow {u}(t,x):=\left(u_1(t,x), u_2(t,x),\cdots, u_m(t,x)\right)^{T}$ represent the displacement vector.
 Throughout this article, we use the following notation convention: for a given vector $\overrightarrow{v}:=(v_1,v_2,\cdots,v_m)$, we define
$\partial_t \overrightarrow{v}:=(\partial_t v_1, \partial_t v_2,\cdots,\partial_t v_m)^T$. Similarly, the operators $\nabla_x$ and $\Delta_x$ also act componentwise on the given vector; that is,  $\nabla_x \overrightarrow{v}:= (\nabla_x v_1, \nabla_x v_2,\cdots,\nabla_x v_m)^T 
$ and $\Delta_x \overrightarrow{v}:= (\Delta_x v_1, \Delta_x v_2,\cdots,\Delta_x v_m)^T$. Let $\square:= \partial_t^2-\Delta_{x}$ denote the standard wave operator and $\overrightarrow{A}^i:=(A^i_1,A^i_2,\cdots,A^i_n)$ for each $1\leq i\leq m$. Now, if we denote 
\begin{equation}\label{Def: A, B and q}
\begin{split}
\mathcal{A}(t,x)&:=\diag\left(A^{1}_0(t,x),\,A^{2}_0(t,x),\,\cdots,\,A^{m}_0(t,x)\right),\\
\mathcal{B}(t,x)&:=\diag\left(\overrightarrow{A}^{1}(t,x),\,\overrightarrow{A}^{2}(t,x),\,\cdots,\,\overrightarrow{A}^{m}(t,x)\right),\quad  \\q(t,x)&:=(q_{ij}(t,x))_{1\leq i,j\leq m},
\end{split}
\end{equation}
then the aforementioned operator can be compactly written as 
\begin{equation}\label{Govern operator: compact form}
\begin{split}
&\mathcal{L}_{\mathcal{A},\mathcal{B},q}\overrightarrow {u}(t,x)=\square \overrightarrow{u}(t,x) + \mathcal{A}(t,x) \,  \partial_t\overrightarrow{u}(t,x) -\mathcal{B} (t,x)\cdot \nabla_x\overrightarrow{u}(t,x)+\big(\widetilde{\mathcal{A}}(t,x)+q(t,x)\big)\overrightarrow{u}(t,x), 
\end{split}
\end{equation}
where
 \[
  \widetilde{\mathcal{A}}(t,x) :=\diag(\widetilde{A^1}(t,x),\,\widetilde{A^2}(t,x),\,\dots,\,\widetilde{A^m}(t,x))
\] with
\begin{align}\label{Def: Tilda A}
\widetilde{A^i}(t,x)=\partial_tA^i_0(t,x)+(A^i_0)^2(t,x)-\nabla_x\cdot\overrightarrow{A}^i(t,x)-|\overrightarrow{A}^i(t,x)|^2,\quad i=1,2,\dots,m.
\end{align}
Also, the dot product $\mathcal{B} \cdot \nabla_x\overrightarrow{u}$ is defined by
\begin{align}
  \mathcal{B} (t,x)\cdot \nabla_x\overrightarrow{u}(t,x):=\left(\overrightarrow{A}^1(t,x)\cdot\nabla_xu_1(t,x),\,\overrightarrow{A}^2(t,x)\cdot\nabla_x u_2(t,x),\,\cdots,\,\overrightarrow{A}^m(t,x)\cdot\nabla_x u_m(t,x)\right)^T .
  \end{align}
  Throughout the paper, boldface notation denotes vector-valued functions, each component of which belongs to the underlying function space. For example, $\mathbf{H}(X)$ denotes the space of vector-valued functions on a measurable set $X$ whose components belong to $H(X)$. 
  \par
  In the present paper, we consider a inverse related to stably determine the time-dependent coefficients appearing in the following initial boundary value problem (IBVP) for a linear coupled relativistic Schr\"odinger system
  \begin{equation}\label{Govern IBVP}
  \left\{\begin{array}{l c}
\mathcal{L}_{\mathcal{A},\mathcal{B},q}\overrightarrow {u}(t,x) = \overrightarrow {0}, &\quad (t,x)\in Q,\\
\overrightarrow {u}(0,x)=\overrightarrow {\psi_0}(x),~~ \partial_t \overrightarrow {u}(0,x) = \overrightarrow {\psi_1}(x), &\quad x\in\Omega,\\
\overrightarrow {u}(t,x) = \overrightarrow {f}(t,x), &\quad (t,x)\in  \Sigma,
\end{array}\right.
\end{equation}
where $\Sigma:=(0,T)\times \partial\O$ denotes the lateral boundary. For $1\leq i,j
\leq m$, let $A_0^i \in C_c^{\infty}(Q, \mathbb{R}), ~ \overrightarrow{A}^i \in  C_c^{\infty}(Q, \mathbb{R}^n)$ and  $q_{ij} \in L^{\infty}(Q)$. Then for $\overrightarrow{\psi_0}\in \mathbf H^1(\Omega),\,\overrightarrow{\psi_1}\in \mathbf L^2(\Omega)$ and $\overrightarrow{f}\in \mathbf H^1(\Sigma)$ such that the compatibility condition $\overrightarrow{\psi_0}|_{\partial\Omega}=\overrightarrow{f}(0,\cdot)|_{\partial\Omega}$ holds, the IBVP \eqref{Govern IBVP} admits a unique solution $\overrightarrow{u}\in  C^1\big([0,T];\mathbf L^2(\Omega)\big)\cap  C\big([0,T];\mathbf H^1(\Omega)\big)$(see Theorem \ref{Theorem:well_posedness}). Moreover, there exist a constant $C=C(\mc{A},\mc{B},\Omega,T)>0$ such that
\[\|\partial_\nu \overrightarrow{u}\|_{\mathbf L^2(\Sigma)}+\|\partial_t \overrightarrow{u}(t,\cdot)\|_{\mathbf L^2(\Omega)}+\|\overrightarrow{u}(t,\cdot)\|_{\mathbf H^1(\Omega)}\leq C\left(\|\overrightarrow{\psi_0}\|_{\mathbf H^1(\Omega)}+\|\overrightarrow{\psi_1}\|_{\mathbf L^2(\Omega)}+\|\overrightarrow{f}\|_{\mathbf L^2(\Sigma)}\right)\]
holds for any $t \in [0,T].$ The coupled system has various applications in engineering sciences. Related systems have applications involving wave propagation in power transmission networks \cite{JacekBanasiak}, studying output regulation and asymptotic tracking of prescribed reference signals \cite{IRSCHEID2023110748}, and characterizing dynamical boundary data in inverse problems for wave equations with matrix-valued potentials \cite{belishev2020datacharacterizationdynamicalinverse}. Also, a system of coupled Klein–Gordon equations is used to describe interacting scalar fields and provide effective continuum models for the long-wavelength dynamics of two-component media, including coupled periodic chains and elastic bi-layer materials \cite{KHUSNUTDINOVA20031}. This motivates us to investigate coupled wave equations with first-order perturbations and matrix-valued potentials.
\subsection{Inverse problem}
The main aim of this article is to derive a stable estimate of the coefficients from the input-output operator, with Neumann data measured on a suitable part of the boundary.  
Using the well-posedness of the forward problem, we define the input-output operator $\Lambda^*$ as 
\begin{align}
\begin{aligned}
    \Lambda^*:{\mathbf H^1(\Omega)\times \mathbf L^2(\Omega)\times \mathbf H^1(\Sigma)}&\mapsto\ { \mathbf H^1(\Omega)\times \mathbf L^2(\Sigma)} \\
\Lambda^*(\overrightarrow{\psi_0},\overrightarrow{\psi_1},\overrightarrow{f})&:=\left(\us(T,\cdot),\,\dd_\nu \us|_{\Sigma}\right),
\end{aligned}
\end{align}
where $\overrightarrow{u}$ is the solution of IBVP \eqref{Govern IBVP} and $\nu$ represents the  outward unit normal vector on $\partial \Omega$.
However, in the above-mentioned input-output operator, the Neumann data is prescribed on the whole boundary. 
To make sense of a partial input-output operator, we start by defining the illuminated and shadowed faces of the boundary $\partial \O$ with respect to some fixed $\omega_{0}\in\mathbb{S}^{n-1}$. Let
\begin{align}
\partial\Omega_{+,\omega_{0}}:=\left\{x\in\partial\Omega:\ \nu(x)\cdot\omega_{0}> 0 \right\}\quad\m{and}\quad \partial\Omega_{-,\omega_{0}}:=\left\{x\in\partial\Omega:\ \nu(x)\cdot\omega_{0}< 0 \right\}
\end{align}
denotes the  $\omega_0$-shadowed and $\omega_0$-illuminated face, respectively. Also, for any $\varepsilon>0$, we define
\begin{align}
  \partial\Omega_{+,\varepsilon,\og_0}&:=\left\{x\in\partial\Omega:\ \nu(x)\cdot\omega_{0}> \varepsilon  \right\}, \quad \partial\Omega_{-,\varepsilon,\og_0}:=\left\{x\in\partial\Omega:\ \nu(x)\cdot\omega_{0}<  \varepsilon \right\},\\   
  \Sigma_{\pm,\omega_0}&:=(0,T)\times\partial\Omega_{\pm,\omega_0}, \quad \Sigma_{\pm,\varepsilon,\omega_0}:=(0,T)\times \partial\Omega_{\pm,\varepsilon,\omega_0}.
\end{align}
We may now define the so-called partial
input-output operator $\Lambda$ as 
\begin{align}\label{Def: input-output map}
\begin{aligned}
\Lambda:{\mathbf H^1(\Omega)\times \mathbf L^2(\Omega)\times \mathbf H^1(\Sigma)}&\mapsto\ { \mathbf H^1(\Omega)\times \mathbf L^2(\Sigma_{-,\varepsilon/2,\og_0})}\\
\Lambda(\overrightarrow{\psi_0},\overrightarrow{\psi_1},\overrightarrow{f})&:=\left(\us(T,\cdot),\,\dd_\nu \us|_{\Sigma_{-,\varepsilon/2,\og_0}}\right),
\end{aligned}
\end{align}
where $\overrightarrow{u}$ be the solution to IBVP \eqref{Govern IBVP} and Neumann boundary data is measured only on $\Sigma_{-,\varepsilon/2,\og_0}$. Since $\Lambda$ is a continuous operator, we denote its operator norm by $\|\cdot\|$. For arbitrary $M_0, \alpha>1$, let $\mc{T}(M_0,\alpha)$ denote the admissible class of functions, given by 
\begin{align*}
\mc{T}(M_0,\alpha):=\Big\{(\mc{A},\mc{B},q):\,A^i_0\in C_c^\infty(Q,\mathbb{R}),\,\overrightarrow{A}^i\in C_c^\infty(Q, \mathbb{R}^n),\, q_{ij}\in L^\infty(Q), \,\m{for}\, 1\leq i,j\leq m, \m{such that}\\
\|\mc{A}\|_{\mathbf H^{\frac{n+1}{2}+\alpha}(Q)}+\|\mc{B}\|_{\mathbf H^{\frac{n+1}{2}+\alpha}(Q)}+\|q\|_{\mathbf H^{\frac{n+1}{2}+\alpha}(Q)}\le M_0\Big\}.
\end{align*}
The main result of this paper is as follows.
\begin{theorem}[Statement of main result]\label{main theorem} 
For each $\ell=1,2$, let
\begin{align*}
\mathcal{A}^{(\ell)}=\diag\Big(A^{(\ell),1}_0,\,A^{(\ell),2}_0,\,\cdots,\,A^{(\ell),m}_0\Big)~\m{and}~
\mathcal{B}^{(\ell)}=\diag\Big(\overrightarrow{A}^{(\ell),1},\,\overrightarrow{A}^{(\ell),2},\,\cdots,\,\overrightarrow{A}^{(\ell),m}\Big).
\end{align*}
Let $(\mc{A}^{(\ell)},\mc{B}^{(\ell)},q^{(\ell)})\in \mc{T}(M_0,\alpha)$ 
such that
\begin{align}\label{Def: extra condition}
    \lVert (A^{(2),j}_0, \overrightarrow{A}^{{(2),j}})- (A^{(2),k}_0, \overrightarrow{A}^{{(2),k}})\rVert_{L^{\infty}(Q)}\leq \lVert(\mc{A}^{(1)},\mc{B}^{(1)})-(\mc{A}^{(2)},\mc{B}^{(2)})\rVert_{\mathbf{L}^{\infty}(Q)},
\end{align}
 for all $1\leq j,k\leq m$ and $\Lambda_\ell$ be the partial input-output operator corresponding to $\mc{L}_{\mc{A}^{(\ell)},\mc{B}^{(\ell)},q^{(\ell)}}$. 
Furthermore,
assume that \[\div_{(t,x)}(\mc{A}^{(1)}, \mc{B}^{(1)})=\div_{(t,x)}(\mc{A}^{(2)}, \mc{B}^{(2)}).\] Then the following estimates	
\begin{align} 
&\|(\mc{A}^{(2)},\mc{B}^{(2)})-(\mc{A}^{(1)},\mc{B}^{(1)})\|_{\LS^{\infty}(Q)}\le C\left(\|\Lambda_1-\Lambda_2\|^{\mu_1}+\big|\log|\log\|\Lambda_1-\Lambda_2\||\big|^{-\mu_2}\right),\label{main result 1}\\
&\|q^{(2)}-q^{(1)}\|_{\LS^{\infty}(Q)}\le C\left(\|\Lambda_1-\Lambda_2\|^{\mu_3}+\Big|\log\big|\log|\log\|\Lambda_1-\Lambda_2\||\big|\Big|^{-\mu_4}\right),\label{main result 2}
\end{align}	
 hold, where constants $C,\mu_1,\mu_2,\mu_3$ and $\mu_4>0$ are depending only on  $Q,m,\,\alpha\,\m{and}~M_0$.		
\end{theorem}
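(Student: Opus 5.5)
The proof will follow the Bellassoued--Ben A\"icha / Senapati type strategy for time-dependent coefficients from a partial input-output map, adapted componentwise to the coupled system. I will proceed in three stages.

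\textbf{Step 1: geometric optics solutions.} For $\omega\in\mathbb{S}^{n-1}$ near $\omega_0$, a large parameter $\lambda$ and $\varphi\in C_c^\infty(\mathbb{R}^n)$, I will construct solutions of $\mathcal{L}_{\mathcal{A}^{(1)},\mathcal{B}^{(1)},q^{(1)}}\overrightarrow{u}=0$ of the form
\begin{equation}
\overrightarrow{u}=\varphi(x+t\omega)\, e^{i\lambda(x\cdot\omega+t)}\, \overrightarrow{b}(t,x)+\overrightarrow{R}_\lambda ,
\end{equation}
with $\overrightarrow{R}_\lambda(0,\cdot)=\partial_t\overrightarrow{R}_\lambda(0,\cdot)=0$ and $\overrightarrow{R}_\lambda|_\Sigma=0$. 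Because $\mathcal{A},\mathcal{B}$ are diagonal, the transport equation is decoupled: its $j$-th component is $b_j=\exp\big(-\int_0^\infty (A^{j}_0,\overrightarrow{A}^j)(t+s,x+s\omega)\cdot(1,-\omega)\,ds\big)e_j$ (up to sign conventions). The matrix $q$ only enters the remainder. Energy estimates (Theorem~\ref{Theorem:well_posedness}) will give $\|\overrightarrow{R}_\lambda\|\le C/\lambda$. I will build the analogous backward solutions $\overrightarrow{v}$ for the adjoint operator with $(\mathcal{A}^{(2)},\mathcal{B}^{(2)},q^{(2)})$, vanishing at $t=T$. For these the boundary term on $\Sigma_{+,\varepsilon/2,\omega_0}$ will be handled by a Carleman estimate with linear weight $e^{\lambda(x\cdot\omega+t)}$, so that only the measured part $\Sigma_{-,\varepsilon/2,\omega_0}$ appears.

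\textbf{Step 2: estimate for the first-order coefficients.} I will let $\overrightarrow{w}$ solve the system for the difference with zero data, and use Green's formula with $\overrightarrow{v}$. This gives an integral identity whose leading term in $\lambda$ is
\begin{equation}
\lambda\int_Q (A^{j}_{0,d},\overrightarrow{A}^{j}_d)\cdot(1,-\omega)\,|\varphi|^2 \,b^{(1)}_j\overline{b^{(2)}_j}\,dx\,dt ,
\end{equation}
where $A_d := A^{(1)}-A^{(2)}$. The remaining terms are bounded by $C(e^{C\lambda}\|\Lambda_1-\Lambda_2\|+\lambda^{-1/2})$. Condition \eqref{Def: extra condition} is what lets me replace the cross terms, which involve differences between components $j\neq k$ coming from the coupling through $q$ in the remainders, by quantities controlled by $\|(\mathcal{A}^{(1)},\mathcal{B}^{(1)})-(\mathcal{A}^{(2)},\mathcal{B}^{(2)})\|_{L^\infty}$, which can then be absorbed. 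Dividing by $\lambda$ and using that $\varphi$ is arbitrary, I will bound the light-ray transform of $(A^{j}_{0,d},\overrightarrow{A}^j_d)\cdot(1,-\omega)$. The bound first appears inside an exponential, $|e^{-\int\cdots}-1|$, which produces a logarithm. Fourier slicing then gives control of $\widehat{\sigma_j}$ on the cone of $\xi$ with $\xi\cdot(1,-\omega)=0$, $\omega$ near $\omega_0$, where $\sigma_j$ is the 1-form $(A^j_{0,d},\overrightarrow{A}^j_d)$. Together with the divergence assumption, $d\sigma_j$ is recovered there. An analytic continuation / unique continuation estimate for compactly supported functions (Vessella type) extends this from the open cone to a ball $|\xi|\le\rho$, at the cost of another logarithm. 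The high frequencies are cut using the a priori bound in $H^{\frac{n+1}{2}+\alpha}$. Since $\mathrm{div}\,\sigma_j=0$ and $\sigma_j$ has compact support, the Hodge decomposition recovers $\sigma_j$ from $d\sigma_j$. Finally, Sobolev embedding and interpolation give \eqref{main result 1}, with the $\log$-$\log$ coming from the two logarithmic losses.

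\textbf{Step 3: the potential.} I will return to the identity at order $\lambda^0$, now taking vector amplitudes $e_k$ for $\overrightarrow{u}$ and $e_j$ for $\overrightarrow{v}$. This isolates $\int q_{d,jk}|\varphi|^2 b^{(1)}_k\overline{b^{(2)}_j}$. The error now includes $\lambda\|(\mathcal{A}^{(1)},\mathcal{B}^{(1)})-(\mathcal{A}^{(2)},\mathcal{B}^{(2)})\|$, which by \eqref{main result 1} is of $\log$-$\log$ size. The same Fourier, analytic continuation and interpolation chain then adds one more logarithm, giving \eqref{main result 2}.

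The main obstacle I expect is the quantitative analytic continuation step combined with the coupling. The off-diagonal $q$ mixes components in the remainder estimates, and making the constants uniform while keeping only partial Neumann data requires careful use of the Carleman estimate. If needed, I would first try a stronger weight, $\lambda$ replaced by $\lambda+\tau$.
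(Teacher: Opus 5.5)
Your overall architecture (integral identity, exact-derivative trick for the light-ray transform, Fourier slicing on a cone with $n\ge 3$, divergence condition, Vessella continuation, interpolation, then a second round for $q$ with a $\lambda\|(\mathcal{A}^{(1)},\mathcal{B}^{(1)})-(\mathcal{A}^{(2)},\mathcal{B}^{(2)})\|_{L^\infty}$ error) matches the paper. However, there is a genuine gap in Step 1, and a second one in Step 3.

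\textbf{Step 1: oscillatory solutions do not give partial data.} The solutions $\varphi(x+t\omega)e^{i\lambda(t+x\cdot\omega)}\overrightarrow{b}+\overrightarrow{R}_\lambda$, with remainders from energy estimates and $\overrightarrow{v}$ vanishing at $t=T$, are the full-data construction. They have modulus $O(1)$, so nothing in them compensates the Carleman weight.

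To bound the unmeasured term $\int_{\Sigma\setminus\Sigma_{-,\varepsilon/2,\omega_0}}\partial_\nu\overrightarrow{w}\cdot\overline{\overrightarrow{v}}$, you would have to do two things. First, pass from $\int_Q|\overrightarrow{F}|^2$ to $\int_Q e^{-2\tau(t+x\cdot\omega)}|\overrightarrow{F}|^2$, where $\overrightarrow{F}$ is the source of the difference problem, of size $\lambda$ and not small. Second, pass back from the weighted boundary norm to an unweighted one. Together these cost $e^{C\tau}$ against a gain of only $\tau^{-1/2}$, for any Carleman parameter $\tau$, so you get nothing beyond the trivial bound.

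The missing idea is to use exponentially growing and decaying solutions whose exponentials \emph{are} the Carleman weight:
$\overrightarrow{u}^{(2)}=e^{\lambda(t+x\cdot\omega)}(\overrightarrow{B}^{(2)}+\overrightarrow{R}^{(2)}_\lambda)$ and $\overrightarrow{v}=e^{-\lambda(t+x\cdot\omega)}(\overrightarrow{B}^{(1)}+\overrightarrow{R}^{(1)}_\lambda)$.
Then $e^{-\lambda(t+x\cdot\omega)}\mathcal{L}_{\mathcal{A}^{(1)},\mathcal{B}^{(1)},q^{(1)}}\overrightarrow{w}=O(\lambda)$ in $L^2(Q)$. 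Cauchy--Schwarz with the boundary Carleman estimate then bounds the unmeasured term by $\lambda^{-1/2}\cdot O(\lambda)=o(\lambda)$.

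Such remainders cannot come from energy estimates, which would give size $e^{C\lambda}$. The paper builds them from the shifted estimate $\|\overrightarrow{u}\|_{H^{-1}_\lambda}\le C\|P_{\mathcal{A},\mathcal{B},\lambda,\omega}\overrightarrow{u}\|_{H^{-2}_\lambda}$ via Hahn--Banach and Riesz, with nothing prescribed at $t=T$. Consequently $\overrightarrow{v}(T,\cdot)\neq 0$, and the term $\int_\Omega\partial_t\overrightarrow{w}(T)\cdot\overline{\overrightarrow{v}(T)}$, which is not part of the data, must be absorbed by $\lambda\int_\Omega e^{-2\lambda(T+x\cdot\omega)}|\partial_t\overrightarrow{w}(T)|^2$ on the left of the Carleman estimate.

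This is also where the factor $e^{\beta\lambda}$ in front of $\|\Lambda_1-\Lambda_2\|$ comes from: it is the size of the exponential GO inputs. Balancing it against $\lambda^{-\delta}$ gives the first logarithm. The passage from $|e^{-X}-1|$ to $|X|$ that you credit with a logarithm loses nothing, since $|X|\le C|e^{-X}-1|$ on the a priori bounded range.

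\textbf{Step 3: misplaced hypothesis and the off-diagonal weight.} You put the hypothesis bounding $\|(A^{(2),j}_0,\overrightarrow{A}^{(2),j})-(A^{(2),k}_0,\overrightarrow{A}^{(2),k})\|_{L^\infty(Q)}$ in the wrong step.

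In Step 2 it is not needed. With amplitudes in the $j$-th component on both sides, every contribution of the coupling through $q$, including via the remainders, is $O(1)$ in $L^1$, hence $O(\lambda^{-1})$ after dividing by $\lambda$.

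It is needed in Step 3, exactly where your sketch passes over it. Write $\mathcal{C}^{(\ell),j}=(A^{(\ell),j}_0,\overrightarrow{A}^{(\ell),j})$. For $j\neq k$, the weight $b^{(1)}_k\overline{b^{(2)}_j}$ is the exponential of $\pm\int_0^t(1,-\omega)\cdot(\mathcal{C}^{(1),k}-\mathcal{C}^{(2),j})\,ds$ along the ray. This involves two \emph{different} components, varies along the ray, and need not be close to $1$. So you only get an attenuated light-ray transform of $q_{jk}$, and the Fourier-slice/Vessella chain does not apply.

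The fix is to split $\mathcal{C}^{(1),k}-\mathcal{C}^{(2),j}=(\mathcal{C}^{(1),k}-\mathcal{C}^{(2),k})+(\mathcal{C}^{(2),k}-\mathcal{C}^{(2),j})$. The hypothesis then gives $|b^{(1)}_k\overline{b^{(2)}_j}-1|\le C\|(\mathcal{A}^{(1)},\mathcal{B}^{(1)})-(\mathcal{A}^{(2)},\mathcal{B}^{(2)})\|_{L^\infty(Q)}$, which is already $\log$-$\log$ small.

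Likewise, on the diagonal the order-$\lambda^0$ term sees $q_{jj}+\widetilde{A^{(2),j}}-\widetilde{A^{(1),j}}$, not $q_{jj}$ alone. This contains first derivatives of $\sigma_j$. You therefore need a derivative bound on $\sigma_j$ from Step 2, obtained by interpolating the $H^{-1}$ estimate with the $H^{\frac{n+1}{2}+\alpha}$ a priori bound, before $q_{jj}$ can be isolated.
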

\begin{remark}
    The additional hypothesis \eqref{Def: extra condition} is required in determining the stable estimate for the off-diagonal components of the potential term $q$. However, for the diagonal components, this condition holds trivially. An analogue assumption for reaction-convection-diffusion systems is considered in \cite{bhardwaj2026stabledetermination}.
\end{remark}

\subsection{Existing papers}
The inverse problem of recovering coefficients in hyperbolic equations has received considerable attention in recent years. We begin by recalling some foundational results concerning inverse problems for hyperbolic equations with time-independent coefficients. Rakesh and Symes \cite{rakesh1988uniqueness} proved that the scalar potential in the wave equation can be uniquely determined from Neumann-to-Dirichlet (NtD) map by employing complex geometric optics (CGO) solutions. This CGO construction originates from the work of Sylvester and Uhlmann \cite{sylvester1987global} in the context of the conductivity equation.
Subsequently, Isakov \cite{isakov1991inverse} extended this uniqueness result to the case of perturbations involving time derivatives (damping term).

For the time-dependent coefficients, reconstruction of a potential within a restricted domain, given that the potential is known outside the domain, has been established in \cite{ramm1991property}. However, in an infinite cylinder, the uniqueness problem is settled by Ramm and Sj\"ostrand \cite{ramm1991inverse}, later generalized in \cite{Salazar_2013, Salazar_2014}. In \cite{stefanov1998stability, stefanov2005stable}, Stefanov and Uhlmann derived stable determination of the Riemannian metric from information encoded in the Dirichlet-to-Neumann (DtN) map. In \cite{kian2013stability}, Kian investigated the stable recovery of the zeroth-order coefficient in an infinite waveguide from the DtN map. In \cite{kian2016recovery}, Kian studied the wave equation with a first-order time-derivative perturbation and proved uniqueness for the corresponding inverse problem.
Further, in \cite{kian2017unique}, a unique identification result was proved for a time-dependent potential $q(t,x)$ from partial boundary measurements. In a Riemannian setup, Kian and Oksanen \cite{kian2019recovery} proved the global uniqueness of $q(t,x)$ from the Cauchy data set.
For more results on stability in the determination of the coefficients in a single wave equation from boundary measurements, we refer \cite{bellassoued2004uniqueness, bellassoued2006lipschitz, bellassoued2017optimal, bellassoued2008stability,
bellassoued2019inverse, bellassoued2019stability, kian2016stability, salazar2014stability, senapati2020stability,isakov1992stability} and the references therein. Recently, Kumar et al. \cite{kumar2026partial} established a unique coefficient identification result in a semilinear damped wave operator using the partial DtN map, whereas in \cite{bhardwaj2026reconstruction}, the authors explicitly reconstructed the coefficients for the same operator, but the DtN map is accessible on the entire boundary. For further studies on inverse problems for nonlinear hyperbolic PDEs with partial data, we refer to \cite{kian2021determination,feizmohammadi2021inverse,liu2025partial}.

In \cite{krishnan2020inverse}, uniqueness was obtained for an inverse problem associated with the relativistic Schr\"odinger equation. \cite{senapati2020stability} settled the stability of determining the coefficient of the relativistic Schr\"odinger equation. 
The present work extends \cite{senapati2020stability}, where the author considered a single relativistic Schr\"odinger equation; here, we establish stability estimates for a coupled system of relativistic Schr\"odinger equations. 
The inverse problem of reconstructing matrix-valued potentials has received comparatively less attention than the corresponding scalar case. Avdonin et al. \cite{avdonin1992boundary} established recovery of a time-independent potential for a wave equation. For the Schr\"odinger equation posed in a bounded domain under a convexity assumption, Eskin and Ralston \cite{eskin2003inverse} determined both the zeroth-order and first-order matrix-valued coefficients from the DtN map. In one spatial dimension, Khanfer et al. \cite{khanfer2019inverse} proved uniqueness of a time-dependent matrix potential under suitable assumptions. 
Mishra and Vashisth \cite{mishra2021determining} demonstrated uniqueness for a system of wave equations with a zeroth-order perturbation using partial boundary data, whereas \cite{kumar2024stable} derived a stability estimate for an analogous system in an infinite waveguide. For an inverse problem associated with a coupled system of wave equations using point source-receiver data, we refer to \cite{bhardwaj2026inverseproblemscoupledwave}.
More recently, \cite{BenFraj2025} and \cite{KumarSarkar2026} proved that a time-dependent matrix potential for the wave equation can be stably determined from arbitrary measurements and input-output map with partial data respectively.

In the present work, we investigate an inverse problem: stably determining coefficients from an input-output operator when the Neumann boundary measurements are available only on a subset of the boundary. More precisely, we demonstrate the Theorem \ref{main theorem}. The proof of Theorem~\ref{main theorem} relies on deriving a suitable integral identity and constructing a suitable choice of solution (known as geometric optics solution) to IBVP \eqref{Govern IBVP} and its $ L^2$-adjoint problem, and finally, using  the Carleman estimates in the aforementioned identity.
The coupling between the system components and partial boundary observations are considered in 
\cite{mishra2021determining, kumar2024stable} and \cite{KumarSarkar2026} respectively. However, the main challenge in this work lies in recovering simultaneously the two coefficient classes; first-order coefficients $(\mathcal{A},\mathcal{B})$ and a matrix potential $q$. It is observed that the stability estimates get deteriorated from log–log for the first-order coefficients to log–log–log for the zeroth order matrix potential. This reflects the additional difficulty of extracting the zeroth-order matrix potential from the available data.

The remainder of this paper is organized as follows. In Section \ref{Section: 2}, we discuss the existence and uniqueness of the solution to the IBVP \eqref{Govern IBVP} along with the energy estimate and bound on the normal derivative. Section \ref{Carleman Estimate} is devoted to deriving the boundary Carleman estimate for the operator $\mathcal{L}_{\mc{A},\mc{B},q}$. Next, in Section \ref{section-GO solution}, we construct the geometric optics solutions associated to the operator $\mathcal{L}_{\mc{A},\mc{B},q}$ and its adjoint operator. Finally, in Section \ref{Section: Proof of main result}, we first derive an integral identity, and then we use the aforementioned geometric optics solution and Carleman estimate to derive the stability estimates \eqref{main result 1} and \eqref{main result 2} in Subsection \ref{Stability estimate for matrix potentials} and \ref{Stability estimate for potential matrix q}, respectively.

\section{Well-posedness of the forward problem}\label{Section: 2}
In this section, we establish the existence, uniqueness, and stability estimate for the IBVP \eqref{Govern IBVP}. Motivated by the approach of  \cite[Theorem 2.30]{kachalov2001inverse} and \cite[Section 2]{lasiecka1986non}, we prove the following result.
\begin{theorem}\label{Theorem:well_posedness}
   Let $\overrightarrow{\psi_0}\in \HS^1(\Omega),\,\overrightarrow{\psi_1}\in \LS^2(\Omega)$ and $\overrightarrow{f}\in \HS^1(\Sigma)$ and satisfies the compatibility criteria, $\overrightarrow{\psi_0}|_{\partial\Omega}=\overrightarrow{f}(0,\cdot)|_{\partial\Omega}$. Then the IBVP \eqref{Govern IBVP} has a unique solution $$\overrightarrow{u}\in C^1\big([0,T];\LS^2(\Omega)\big)\cap C\big([0,T];\HS^1(\Omega)\big)$$ and there exists a constant $C>0$ depending on $\Omega,\,T,\,\mc{A},\,\mc{B}$ and $q$, such that for any $t\in[0,T]$, the following estimate holds
\begin{align}\label{regularity}
\|\partial_\nu \overrightarrow{u}\|_{\LS^2(\Sigma)}+\|\partial_t \overrightarrow{u}(t,\cdot)\|_{\LS^2(\Omega)}+\|\overrightarrow{u}(t,\cdot)\|_{\HS^1(\Omega)}\leq C\left(\|\overrightarrow{\psi_0}\|_{\HS^1(\Omega)}+\|\overrightarrow{\psi_1}\|_{\LS^2(\Omega)}+\|\overrightarrow{f}\|_{\HS^1(\Sigma)}\right).
\end{align}
\end{theorem}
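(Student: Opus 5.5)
We reduce the IBVP to one with homogeneous boundary data and lower-order terms treated as a perturbation of the decoupled wave system. Lasiecka–Lions–Triggiani type results then give both the regularity and the hidden regularity of the normal derivative.

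\textbf{Step 1: the free wave system.} Let $\overrightarrow{w}$ solve $\square \overrightarrow{w}=\overrightarrow{F}$ in $Q$ with data $(\overrightarrow{\psi_0},\overrightarrow{\psi_1},\overrightarrow{f})$. Since $\square$ acts componentwise, the scalar theory of \cite[Section 2]{lasiecka1986non} and \cite[Theorem 2.30]{kachalov2001inverse} applies to each component. Hence for $\overrightarrow{F}\in \LS^1(0,T;\LS^2(\Omega))$, $\overrightarrow{f}\in\HS^1(\Sigma)$ and compatible data, there is a unique $\overrightarrow{w}\in C^1([0,T];\LS^2(\Omega))\cap C([0,T];\HS^1(\Omega))$ with $\partial_\nu\overrightarrow{w}\in\LS^2(\Sigma)$. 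Moreover
\[
\|\partial_\nu \overrightarrow{w}\|_{\LS^2(\Sigma)}+\sup_t\big(\|\partial_t\overrightarrow{w}(t)\|_{\LS^2}+\|\overrightarrow{w}(t)\|_{\HS^1}\big)\le C\big(\|\overrightarrow{\psi_0}\|_{\HS^1}+\|\overrightarrow{\psi_1}\|_{\LS^2}+\|\overrightarrow{f}\|_{\HS^1(\Sigma)}+\|\overrightarrow{F}\|_{\LS^1(0,T;\LS^2)}\big).
\]
The bound on the normal derivative (hidden regularity) comes from the multiplier $h\cdot\nabla \overrightarrow{w}$, where $h$ is a smooth extension of $\nu$.

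\textbf{Step 2: fixed point.} Write the equation as $\square\overrightarrow{u}=\overrightarrow{F}(\overrightarrow{u}):=-\mathcal{A}\partial_t\overrightarrow{u}+\mathcal{B}\cdot\nabla_x\overrightarrow{u}-(\widetilde{\mathcal{A}}+q)\overrightarrow{u}$. The coefficients $\mathcal{A},\mathcal{B},\widetilde{\mathcal{A}}$ are smooth and compactly supported, and $q\in L^\infty$. Hence the map $\overrightarrow{u}\mapsto \overrightarrow{F}(\overrightarrow{u})$ sends $X_\tau:=C^1([0,\tau];\LS^2)\cap C([0,\tau];\HS^1)$ into $L^1(0,\tau;\LS^2)$ with norm at most $C\tau\|\overrightarrow{u}\|_{X_\tau}$. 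Define $\Phi(\overrightarrow{v})$ as the solution from Step 1 with source $\overrightarrow{F}(\overrightarrow{v})$. By the Step 1 estimate, $\Phi$ is a contraction on $X_\tau$ for small $\tau$, where $\tau$ depends only on the coefficient bounds. Since $\tau$ is uniform, we iterate on $[\tau,2\tau],\dots$, restarting with data $(\overrightarrow{u}(k\tau),\partial_t\overrightarrow{u}(k\tau))\in\HS^1\times\LS^2$; these satisfy the compatibility condition automatically. This covers $[0,T]$ and gives existence and uniqueness. An alternative to the iteration is to obtain the global estimate directly via Gronwall.

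\textbf{Step 3: estimate.} Apply Step 1 to $\overrightarrow{u}$ on $[0,t]$. The source is bounded by
\[
\|\overrightarrow{F}(\overrightarrow{u})\|_{L^1(0,t;\LS^2)}\le C\int_0^t\big(\|\partial_t\overrightarrow{u}(s)\|_{\LS^2}+\|\overrightarrow{u}(s)\|_{\HS^1}\big)ds.
\]
Set $E(t)=\|\partial_t\overrightarrow{u}(t)\|_{\LS^2}+\|\overrightarrow{u}(t)\|_{\HS^1}$. Then $E(t)\le C(\text{data})+C\int_0^tE$, and Gronwall gives the energy part of \eqref{regularity}. Plugging this into the hidden-regularity bound of Step 1 on $[0,T]$ yields the estimate for $\|\partial_\nu\overrightarrow{u}\|_{\LS^2(\Sigma)}$. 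The constant depends on $\Omega$, $T$ and the norms of $\mathcal{A},\mathcal{B},q$ (including $\widetilde{\mathcal{A}}$, which involves derivatives of $\mathcal{A},\mathcal{B}$).

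\textbf{Main obstacle.} The delicate point is Step 1 with nonhomogeneous $\HS^1(\Sigma)$ Dirichlet data, where the boundary trace of $\partial_\nu\overrightarrow{w}$ must be controlled. We will handle it by citing the scalar results componentwise. The only point to check is the coupling term $q\overrightarrow{u}$, and it causes no difficulty because it is of zeroth order and enters only through the $L^1(L^2)$ source.
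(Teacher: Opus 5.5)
Your proof is correct, but it is organized differently from the paper's.

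\textbf{The paper's route.} The paper splits $\us=\us^{(1)}+\us^{(2)}$. Here $\us^{(1)}$ solves the decoupled system with the first-order terms $\mathcal{A}\partial_t-\mathcal{B}\cdot\nabla_x+\widetilde{\mathcal{A}}$ and the full data $(\overrightarrow{\psi_0},\overrightarrow{\psi_1},\overrightarrow{f})$; its existence and the bound on $\partial_\nu\us^{(1)}$ are quoted from \cite{kachalov2001inverse}. The second piece $\us^{(2)}$ solves the coupled system with zero data and source $-q\us^{(1)}$. It is obtained from the Lions--Magenes Galerkin theory \cite{lions2012non}, using coercivity of the associated sesquilinear form. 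Then $\partial_\nu\us^{(2)}\in\LS^2(\Sigma)$ is proved by an explicit multiplier computation with $(T-t)\,\vartheta\cdot\nabla_x\us^{(2)}$.

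\textbf{Your route.} You move every lower-order term, including the first-order ones and the coupling, into an $L^1(0,T;\LS^2(\Omega))$ source for the free operator $\square$. You then close with a short-time contraction and Gronwall.

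\textbf{Comparison.} Your approach is more economical. You need only the scalar free-wave theorem of \cite{lasiecka1986non}, applied componentwise. The hidden regularity of $\partial_\nu\us$ is inherited directly from that theorem, so no multiplier identity for the perturbed coupled operator has to be redone. The damping term $\mathcal{A}\partial_t\us$ is also treated on the same footing as the other terms, rather than having to fit a sesquilinear-form framework. In exchange, the paper's route keeps the coupling inside a variational setting and makes the multiplier mechanism behind the trace bound explicit, at the cost of invoking two separate existence theories.

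\textbf{One point to tighten in Step 3.} You want $E(t)\le C(\text{data})+C\int_0^tE$ with a constant independent of $t$. To get this, write $\us=\overrightarrow{w}_0+\overrightarrow{z}$. Take $\overrightarrow{w}_0$ to be the free wave on $[0,T]$ carrying all the data and no source, and $\overrightarrow{z}$ the free wave with zero data and source $\overrightarrow{F}(\us)$. Then use the Duhamel energy bound for $\overrightarrow{z}$.
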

\begin{proof}
To establish the existence and uniqueness of the solution to the initial-boundary value problem (IBVP) \eqref{Govern IBVP}, we first decompose
\[
\us(t,x) := \us^{(1)}(t,x) + \us^{(2)}(t,x).
\]
We then determine the functions $\us^{(1)}$ and $\us^{(2)}$ as solutions of the following IBVPs.
\begin{equation}\label{gov_eqn_for_u1}
\begin{cases}
  \square \us^{(1)} + \mathcal{A}(t,x) \,  \partial_t\us^{(1)} -\mathcal{B} (t,x)\cdot \nabla_x\us^{(1)}+\widetilde{\mathcal{A}}(t,x)\us^{(1)}=\overrightarrow{0},&(t,x)\in Q,\\
  \overrightarrow {u}^{(1)}(0,x)=\overrightarrow {\psi_0}(x),~~ \partial_t \overrightarrow {u}^{(1)}(0,x) = \overrightarrow {\psi_1}(x), &\quad x\in\Omega,\\
\overrightarrow {u}^{(1)}(t,x) = \overrightarrow {f}(t,x), & (t,x)\in  \Sigma,
\end{cases}
\end{equation}
and 
\begin{equation}\label{gov_eqn_for_u2}
\begin{cases}
  \mathcal{L}_{\mathcal{A},\mathcal{B},q}\us^{(2)}=-q(t,x)\us^{(1)},&(t,x)\in Q,\\
  \overrightarrow {u}^{(2)}(0,x)=\overrightarrow {0},~~ \partial_t \overrightarrow {u}^{(2)}(0,x) = \overrightarrow {0}, &\quad x\in\Omega,\\
\overrightarrow {u}^{(2)}(t,x) = \overrightarrow {0}, & (t,x)\in  \Sigma,
\end{cases}
\end{equation}
respectively. The existence and uniqueness of the solution $\us^{(1)}$ for the hyperbolic IBVP \eqref{gov_eqn_for_u1} can be guaranteed by \cite{kachalov2001inverse}. In addition, we have
\begin{align}\label{Existence: u_1}
\us^{(1)}\in C^1\big([0,T];\LS^2(\Omega)\big)\cap C\big([0,T];\HS^1(\Omega)\big),\quad \dd_\nu \us^{(1)}\in \LS^2(\Sigma)\end{align} and there exists a constant $C=C(\mc{A},\mc{B},\Omega,T)>0$ such that
\begin{align}\label{energy_estimate_u1}
\|\partial_\nu \us^{(1)}\|_{\LS^2(\Sigma)}+\|\us^{(1)}\|_{\HS^1(Q)}\leq C\left(\|\overrightarrow{\psi_0}\|_{\HS^1(\Omega)}+\|\overrightarrow{\psi_1}\|_{\LS^2(\Omega)}+\|\overrightarrow{f}\|_{\HS^1(\Sigma)}\right).\end{align}
Furthermore, for the existence and uniqueness of $\us^{(2)}$, we now introduce the sesquilinear form
$B(t;\cdot,\cdot)$ associated to IBVP
\eqref{gov_eqn_for_u2}.  Define
$B(t;\cdot,\cdot):\HS_0^1(\Omega)\times \HS_0^1(\Omega)\rightarrow \mathbb{R}$ by
\begin{align}\label{bilinear_map}
B[t;\gs,\hs]:=\int_{\Omega}\left[\nabla_x\gs\cdot\overline{\nabla_x\hs}-\big(\mc{B}(t,x)\cdot\nabla_x\gs\big)\cdot\overline{\hs}+\big(\widetilde{\mathcal{A}}(t,x)+q(t,x)\big)\gs\cdot\overline{\hs}\right]dx.
\end{align}
Using the Cauchy-Schwartz inequality, one can observe that the sesquilinear form $B[t;\cdot,\cdot]$ is continuous on $\HS_0^1(\Omega)\times \HS_0^1(\Omega)$. More precisely, we have 
\begin{align}\label{Bilinear_is_bdd}
|B[t;\gs,\hs]|\leq C\|\gs\|^2_{\HS_0^1(\Omega)}\|\hs\|^2_{\HS_0^1(\Omega)},
\end{align}
where constant $C=C(\mc{A},\,\mc{B},\,q,\,\Omega,\,T)>0$.
Also, we have
\begin{align*}
\Re B[t;\gs,\gs]\geq\|\nabla_x\gs\|^2_{\LS^2(\Omega)}-\Big(\|\widetilde{\mathcal{A}}\|_{\mathbf L^\infty(Q)}+\| q\|_{\mathbf L^\infty(Q)}\Big)\|\gs\|^2_{\LS^2(\Omega)},
\end{align*}
where $\Re$ denotes the real part of the underlying function.
Choosing $\lambda>\|\widetilde{\mathcal{A}}\|_{\mathbf L^\infty(Q)}+\|q\|_{\mathbf L^\infty(Q)}$ in the above equation, we get 
\begin{align}\label{bilinear_coercivity}
\Re B[t;\gs,\gs]+\lambda\|\gs\|^2_{\LS^2(\Omega)}\geq\alpha\|\gs\|^2_{\HS^1(\Omega)}\quad \text{for some constant}\,\alpha>0.
\end{align} 
Therefore, using the Theorem $8.1$ together with Remark $8.1$ of Chapter $3$  in \cite{lions2012non} (see also \cite{lions2013equations}), we observe that the IBVP \eqref{gov_eqn_for_u2} admits a unique solution $\us^{(2)}\in C^1\big([0,T];\LS^2(\Omega)\big)\cap C\big([0,T];\HS^1(\Omega)\big)$ and it satisfies the following estimate 
\begin{align}\label{H1_estimate_for_u2}
\lVert\us^{(2)}\rVert_{\HS^1(Q)}\leq C \lb \lVert\overrightarrow{\psi_0}\rVert_{\HS^1(\Omega)}+ \lVert\overrightarrow{\psi_1}\rVert_{\LS^2(\Omega)}+\lVert\overrightarrow {f}\rVert_{\HS^1(\Sigma)}\rb.
\end{align}
To complete the proof of Theorem \ref{Theorem:well_posedness}, it remains to verify that $\dd_\nu\us^{(2)}\in \LS^2(\Sigma)$. To this end, we follow the approach proposed in \cite{mishra2021determining}. Fix an arbitrary point $x\in\dd\Omega$ and denote by $\nu(x)$ the outward unit normal vector to $\dd\Omega$ at $x$. We then extend $\nu$ to a (sufficiently smooth) vector field on $\overline{\Omega}$, and denote this extension by $\vartheta(x)$. Next, we multiply the left-hand side of IBVP
\eqref{gov_eqn_for_u2} with $(T-t)\vartheta(x)\cdot\nabla_{x}\us^{(2)}$ and integrate it over $Q$, we have
\begin{align*}
\int\limits_{Q}\lb T-t\rb  \Lc_{\mc{A},\mc{B},q}&\us^{(2)}(t,x) \cdot \lb \vartheta(x)\cdot\nabla_{x}\us^{(2)}(t,x)\rb\D x \D t\\
=&\int\limits_{Q}\lb T
-t\rb\dd_{t}^{2}\us^{(2)}(t,x)\cdot \lb \vartheta(x)\cdot\nabla_{x}\us^{(2)}(t,x)\rb\D x \D t \\
&\quad -\int\limits_{Q}\lb T-t\rb\Delta_{x}\us^{(2)}(t,x)\cdot \lb \vartheta(x)\cdot\nabla_{x}\us^{(2)}(t,x)\rb\D x \D t\\
&\quad +\int\limits_{Q}\lb T-t\rb \mc{A}(t,x)\dd_t\us^{(2)}(t,x)\cdot \lb \vartheta(x)\cdot\nabla_{x}\us^{(2)}(t,x)\rb\D x \D t\\
&\quad -\int\limits_{Q}\lb T-t\rb \lb\mc{B}(t,x)\cdot\nabla_x\us^{(2)}(t,x)\rb\cdot \lb \vartheta(x)\cdot\nabla_{x}\us^{(2)}(t,x)\rb\D x \D t\\
&\quad +\int\limits_{Q}\lb T-t\rb \lb \widetilde{\mc{A}}+q(t,x)\rb\us^{(2)}(t,x)\cdot \lb \vartheta(x)\cdot\nabla_{x}\us^{(2)}(t,x)\rb\D x \D t\\
=& \sum_{j=1}^{m}\int\limits_{Q}\lb T-t\rb\dd_{t}^{2}u^{(2)}_j(t,x)\lb \vartheta(x)\cdot\nabla_{x}u^{(2)}_j(t,x)\rb\D x\D t\\
&\quad -\sum_{j=1}^{m}\int\limits_{Q}\lb T-t\rb\Delta_{x}u^{(2)}_j(t,x)\lb \vartheta(x)\cdot\nabla_{x}u^{(2)}_j(t,x)\rb\D x\D t\\
&\quad +\sum_{j=1}^{m}\int\limits_{Q}\lb T-t\rb A^j_0(t,x)\dd_t u^{(2)}_j(t,x)\lb \vartheta(x)\cdot\nabla_{x}u^{(2)}_j(t,x)\rb\D x\D t\\
&\quad -\sum_{j=1}^{m}\int\limits_{Q}\lb T-t\rb\lb \overrightarrow{A^{j}}(t,x)\cdot\nabla_{x}u^{(2)}_j(t,x)\rb\lb \vartheta(x)\cdot\nabla_{x}u^{(2)}_j(t,x)\rb\D x\D t\\
&\quad +\sum_{j=1}^{m}\int\limits_{Q}\lb T-t\rb  \widetilde{A^j}(t,x) u^{(2)}_j(t,x)\lb \vartheta(x)\cdot\nabla_{x}{u^{(2)}_j}(t,x)\rb\D x\D t\\
&\quad +\sum_{i,j=1}^{m}\int\limits_{Q}\lb T-t\rb q_{ij}(t,x)u^{(2)}_j(t,x)\lb \vartheta(x)\cdot\nabla_{x}{u^{(2)}_i}(t,x)\rb\D x\D t\\
:\triangleq& I+II+III+IV+V+VI.
\end{align*}
By multiplying the right-hand side of the IBVP \eqref{gov_eqn_for_u2} by $(T - t)\,\vartheta(x)\cdot\nabla_{x}\us^{(2)}$, integrating the resulting expression over $Q$, and subsequently comparing it with the preceding identity, we obtain
\begin{align}\label{integral_identity_1}
I+II+III+IV+V+VI=-\int\limits_{Q}\lb T-t\rb q(t,x)\us^{(1)}(t,x)\cdot \lb \vartheta(x)\cdot\nabla_{x}\us^{(2)}(t,x)\rb\D x \D t.
\end{align}
Using integration by parts, we observe that $I$ reduces to
\begin{align*}
\begin{aligned}
I&=-T\sum_{j=1}^{m}\int\limits_{\Omega} \dd_{t}u^{(2)}_j(0,x)\lb \vartheta(x)\cdot\nabla_{x}u^{(2)}_j(0,x)\rb\D x+\sum_{j=1}^{m}\int\limits_{Q}\dd_{t}u^{(2)}_j(t,x)\lb \vartheta(x)\cdot\nabla_{x}u^{(2)}_j(t,x)\rb \D x\D t\\
&\qquad \qquad-\sum_{j=1}^{m}\int\limits_{Q}\lb T-t\rb\dd_{t}u^{(2)}_j(t,x)\lb \vartheta(x)\cdot\nabla_{x}\dd_{t}u^{(2)}_j(t,x)\rb \D x\D t\\
&=-T\int\limits_{\Omega} \dd_{t}\us^{(2)}(0,x)\cdot\lb \vartheta(x)\cdot\nabla_{x}\us^{(2)}(0,x)\rb\D x+\int\limits_{Q}\dd_{t}\us^{(2)}(t,x)\cdot\lb \vartheta(x)\cdot\nabla_{x}\us^{(2)}(t,x)\rb\D x\D t\\
&\qquad \qquad  -\int\limits_{Q}\frac{(T-t)}{2} \nabla_{x}\cdot\lb \vartheta(x)\lvert\dd_{t}\us^{(2)}(t,x)\rvert^{2}\rb\D x\D t +\int\limits_{Q}\frac{(T-t)}{2}\lvert\dd_{t}\us^{(2)}(t,x)\rvert^{2}\nabla_{x}\cdot\vartheta(x)\D x \D t.
\end{aligned}
\end{align*}
Furthermore, by applying the Gauss divergence theorem along with the initial and boundary conditions prescribed in the IBVP \eqref{gov_eqn_for_u2}, the foregoing expression simplifies to
\begin{align}\label{integral_identity_A1}
\begin{aligned}
I&=\int\limits_{Q}\dd_{t}\us^{(2)}(t,x)\cdot\lb \vartheta(x)\cdot\nabla_{x}\us^{(2)}(t,x)\rb\D x\D t +\int\limits_{Q}\frac{T-t}{2}\lvert\dd_{t}\us^{(2)}(t,x)\rvert^{2}\nabla_{x}\cdot\vartheta(x)\D x \D t.
\end{aligned}
\end{align}
Similarly, using integration by parts, $II$ reduces to
\begin{align*}
II&=-\sum_{j=1}^{m}\int\limits_{Q}\lb T-t\rb\Delta_{x}u^{(2)}_j(t,x)\lb \vartheta(x)\cdot\nabla_{x}u^{(2)}_j(t,x)\rb\D x\D t\\
&=-\sum_{j,k=1}^{m}\int\limits_{Q}(T-t)\sum_{l=1}^{n}\dd_{k}^{2}u^{(2)}_j(t,x)\vartheta_{l}(x)\dd_{l}u^{(2)}_j(t,x)\D x\D t\\
&=-\sum_{j=1}^{m}\int\limits_{Q}(T-t)\nabla_{x}\cdot \lb \nabla_{x}u^{(2)}_j(t,x)\vartheta(x)\cdot\nabla_{x}u^{(2)}_j(t,x)\rb\D x\D t\\
&\qquad-\int\limits_{Q}\frac{T-t}{2}\nabla_{x}\cdot\vartheta(x)\lvert\nabla_{x}\us^{(2)}(t,x)\rvert^{2}\D x\D t\\
&\qquad+\sum_{j,k=1}^{m}\int\limits_{Q}(T-t)\sum_{l=1}^{n}\dd_{k}u^{(2)}_j(t,x)\dd_{k}\vartheta_{l}(x)\dd_{l}u^{(2)}_j(t,x)\D x\D t\\
&\qquad+\int\limits_{Q}\frac{T-t}{2}\nabla_{x}\cdot\lb \vartheta(x)\lvert\nabla_{x}\us^{(2)}(t,x)\rvert^{2}\rb\D x \D t.\\
\end{align*}
Next, we impose the Gauss divergence theorem in the above expression and use the vanishing boundary condition  to arrive at
\begin{equation}\label{integral_identity_A2}
\begin{aligned}
II&=-\int\limits_{\Sigma}\frac{T-t}{2}\lvert\dd_{\nu}\us^{(2)}(t,x)\rvert^{2}\D S_{x}\D t+\sum_{j,k=1}^{m}\int\limits_{Q}(T-t)\sum_{l=1}^{n}\dd_{k}u^{(2)}_j(t,x)\dd_{k}\vartheta_{l}(x)\dd_{l}u^{(2)}_j(t,x)\D x\D t\\
&\qquad \qquad -\int\limits_{Q}\frac{T-t}{2}\nabla_{x}\cdot\vartheta(x)\lvert\nabla_{x}\us^{(2)}(t,x)\rvert^{2}\D x\D t.
\end{aligned}
\end{equation}
Taking into account \eqref{integral_identity_A1}, \eqref{integral_identity_A2} in \eqref{integral_identity_1}, we have
\begin{align*}
    -&\int\limits_{\Sigma}\frac{T-t}{2}\lvert\dd_{\nu}\us^{(2)}(t,x)\rvert^{2}\D S_{x}\D t \\
    & \quad=-\sum_{j,k=1}^{m}\int\limits_{Q}(T-t)\sum_{l=1}^{n}\dd_{k}u^{(2)}_j(t,x)\dd_{k}\vartheta_{l}(x)\dd_{l}u^{(2)}_j(t,x)\D x\D t
    + \int\limits_{Q}\frac{T-t}{2}\nabla_{x}\cdot\vartheta(x)\lvert\nabla_{x}\us^{(2)}(t,x)\rvert^{2}\D x\D t\\
    &\qquad -\int\limits_{Q}\dd_{t}\us^{(2)}(t,x)\cdot\lb \vartheta(x)\cdot\nabla_{x}\us^{(2)}(t,x)\rb\D x\D t -\int\limits_{Q}\frac{T-t}{2}\lvert\dd_{t}\us^{(2)}(t,x)\rvert^{2}\nabla_{x}\cdot\vartheta(x)\D x \D t\\
    &\qquad -\int\limits_{Q}\lb T-t\rb q(t,x)\us^{(1)}(t,x)\cdot \lb \vartheta(x)\cdot\nabla_{x}\us^{(2)}(t,x)\rb\D x \D t -III-IV-V-VI.
\end{align*}
Afterwards, using the Cauchy-Schwartz inequality in the right-hand side of the above identity, we end up with
\begin{align}\label{L2_norm_normal_derivative}
\begin{aligned}
\Big|\int\limits_{\Sigma}&\frac{T-t}{2}\lvert\dd_{\nu}\us^{(2)}(t,x)\rvert^{2}\D S_{x}\D t\Big|\\
&\leq C\int\limits_{Q}\lb\lvert\us^{(1)}(t,x)\rvert^{2}+\lvert\us^{(2)}(t,x)\rvert^{2}+ \lvert\dd_{t}\us^{(2)}(t,x)\rvert^{2}+\lvert\nabla_{x}\us^{(2)}(t,x)\rvert^{2}\rb\D x\D t.
\end{aligned}
\end{align}
Combining \eqref{L2_norm_normal_derivative} with \eqref{energy_estimate_u1} and \eqref{H1_estimate_for_u2}, to get
\begin{align*}
 \|\partial_\nu \us^{(2)}\|_{\LS^2(\Sigma)} \leq C \lb \lVert\overrightarrow{\psi_0}\rVert_{\HS^1(\Omega)}+ \lVert\overrightarrow{\psi_1}\rVert_{\LS^2(\Omega)}+\lVert\overrightarrow {f}\rVert_{\HS^1(\Sigma)}\rb.
\end{align*}
As a consequence, we have 
\begin{align}\label{Existence: u_2}
\us^{(2)}\in \CS^{1}\lb [0,T];\LS^{2}(\Omega)\rb\cap \CS\lb [0,T];\HS^{1}(\Omega)\rb  \mbox{and}\  \dd_{\nu}\us^{(2)}\in \LS^{2}(\Sigma).
\end{align}
Combining \eqref{Existence: u_1} and \eqref{Existence: u_2}, we conclude that
\begin{align*}
&\us\in \CS^{1}\lb [0,T];\LS^{2}(\Omega)\rb\cap \CS\lb [0,T];\HS^{1}(\Omega)\rb  \mbox{and}\  \dd_{\nu}\us\in \LS^{2}(\Sigma).
\end{align*}
Also, we have
\begin{align*}
 \lVert\dd_{\nu}\us\rVert_{\LS^{2}(\Sigma)}+\lVert\vec{u}\rVert_{\HS^{1}(Q)}\leq C \lb \lVert\overrightarrow{\psi_0}\rVert_{\HS^1(\Omega)}+ \lVert\overrightarrow{\psi_1}\rVert_{\LS^2(\Omega)}+\lVert\overrightarrow {f}\rVert_{\HS^1(\Sigma)}\rb.
\end{align*}
Hence the result follows.
\end{proof}

\section{Carleman estimate}\label{Carleman Estimate}
In the present section, first we establish the Carleman estimate for the operator
$\mathcal{L}_{\mc{A},\mc{B},q}$ and then  shift its index to obtain weighted $H^{-1}-H^{-2}$ Carleman estimate. For the wave operator subjected exclusively to a zeroth-order perturbation, a corresponding Carleman estimate has been established in \cite{kian2016stability} for the weight function $\varphi_\lambda = \lambda \,(t + x \cdot \o)$. However, we modify the weight function in the presence of a first-order perturbation so that it absorbs the corresponding first-order contribution.
The proof idea follows and is inspired by \cite{kian2016recovery, krishnan2020inverse, senapati2020stability}.  
\subsection{Boundary Carleman estimate}
The Carleman estimate enables us to manage the boundary terms across sections of the boundary where measurements are lacking. We give a detailed proof of the Carleman estimate which provides the necessary ingredients for the stability estimate in Theorem \ref{main theorem}. More precisely, we have the following theorem.
\begin{theorem}\label{Theorem: Boundary Carleman estimate}
Let  $(\mc{A},\mc{B},q)\in \mc{T}(M_0,\alpha)$ and $\omega\in \Sb^{n-1}$. Furthermore, let $\overrightarrow{v}\in \CS^{2}(\overline{Q})$ be an arbitrary vector field satisfying
\begin{align*}
\overrightarrow{v}|_{\Sigma}=\overrightarrow{0},\ \overrightarrow{v}|_{t=0}=\partial_{t}\overrightarrow{v}|_{t=0}=\overrightarrow{0}.
\end{align*}
Then there exist constants $\lambda_{0}, C > 0$, both depending only on $m, M_{0}, \alpha \text{ and } Q$, such that for every $\lambda \geq \lambda_{0}$, the following holds:
\begin{align}\label{boundary carleman estimate}
\begin{aligned}
&\int_{Q}e^{-2\ld(t+x\cdot \og)}\left(\ld^2|\vs(t,x)|^2+|\nabla_{(t,x)}\vs(t,x)|^2\right) dxdt+\ld \int\limits_{\Sigma_{+,\omega}}e^{-2\ld(t+x\cdot \og)}|\omega\cdot \nu(x)||\partial_{\nu}\vs|^2dS_xdt\\
&\quad +\ld\int_{\Omega} e^{-2\ld(T+x\cdot \og)}|\dd_t\vs(T,x)|^2\ dx\le C\int_{\Omega} e^{-2\lambda(T+x\cdot\omega)}\left(\lambda^3|\vs(T,x)|^2+\ld|\nabla_x{\vs(T,x)}|^2\right)dx\\
&\quad+C \bigg(\int_{Q}e^{-2\lambda(t+x\cdot\omega)}|\mathcal{L}_{\mathcal{A},\mathcal{B},q}\vs(t,x)|^2dxdt+\lambda\int\limits_{\Sigma_{-,\omega}}{ e^{-2\lambda(t+x\cdot\omega)}|\omega\cdot \nu(x)||\partial_{\nu}\vs|^2}~dS_xdt \bigg).
\end{aligned}
\end{align}
\end{theorem}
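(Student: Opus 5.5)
The plan is to reduce the statement to the scalar wave operator with the linear weight $\varphi_\lambda=\lambda(t+x\cdot\omega)$, as in \cite{kian2016stability}. We then absorb the first- and zeroth-order terms of $\mathcal{L}_{\mathcal{A},\mathcal{B},q}$ as perturbations, and finally sum over the $m$ components.

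\textbf{Step 1: conjugated wave operator.} For a scalar $v\in C^2(\overline Q)$ with $v|_{\Sigma}=0$ and $v|_{t=0}=\partial_t v|_{t=0}=0$, set $w=e^{-\lambda(t+x\cdot\omega)}v$. Then $e^{-\lambda(t+x\cdot\omega)}\square v = P_\lambda w$, where
\[
P_\lambda w=\square w+2\lambda(\partial_t w-\omega\cdot\nabla_x w).
\]
The zeroth-order terms cancel because $|\omega|=1$. We split $P_\lambda w=P_1w+P_2w$ with $P_1=\square$ and $P_2=2\lambda(\partial_t-\omega\cdot\nabla_x)$, and expand
\[
\|P_\lambda w\|^2\ge 2\langle P_1w,P_2w\rangle+\|P_2w\|^2 .
\]
The cross term $4\lambda\int_Q\square w\,(\partial_t w-\omega\cdot\nabla_x w)$ is integrated by parts. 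Since the vector field $\partial_t-\omega\cdot\nabla_x$ has constant coefficients, the interior contributions are exact divergences. Hence only boundary terms survive:
\begin{itemize}
\item on $\Sigma$, using $w=0$ there, we get $-2\lambda\int_\Sigma(\omega\cdot\nu)|\partial_\nu w|^2$ up to sign;
\item at $t=T$, we get terms in $\lambda(|\partial_t w|^2+|\nabla_x w|^2)$ and $\lambda\,\partial_t w\,(\omega\cdot\nabla_x w)$;
\item at $t=0$, all terms vanish.
\end{itemize}
Rearranging puts $\lambda\int_{\Sigma_{+,\omega}}$ on the left and $\lambda\int_{\Sigma_{-,\omega}}$ on the right. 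Interior control comes from $\|P_2w\|^2=4\lambda^2\|(\partial_t-\omega\cdot\nabla)w\|^2$. From this we recover $\lambda^2\|w\|^2$ by a one-dimensional Poincaré inequality along the characteristic lines $s\mapsto(t+s,x-s\omega)$; these lines exit $Q$ through the zero data at $t=0$ or through $\Sigma$, up to a trace at $t=T$ that goes to the right-hand side.

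\textbf{Step 2: full gradient and the trace at $t=T$.} Pure $\partial_t-\omega\cdot\nabla$ control does not give $|\nabla_{(t,x)}w|^2$. I would add a lower-order multiplier: test $P_\lambda w$ against $\lambda w$ (times a small constant) and integrate by parts. This yields $\lambda(\|\nabla_xw\|^2-\|\partial_tw\|^2)$ plus a boundary term at $t=T$ of size $\lambda|w\,\partial_t w|$. Combined with the Step 1 control of $\partial_tw+\ldots$, this gives the full gradient at cost $\lambda^{-1}\|P_\lambda w\|^2$.

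The boundary term at $t=T$ is handled as follows. Keep $\lambda\int_\Omega|\partial_tw(T)|^2$ on the left. Bound the mixed terms by Young's inequality with $\lambda^3|w(T)|^2+\lambda|\nabla_xw(T)|^2$ on the right. Converting back via $\partial_t w=e^{-\varphi}(\partial_tv-\lambda v)$ produces the stated weights $\lambda^3|v(T)|^2$ and $\lambda|\nabla_xv(T)|^2$. I expect this bookkeeping at $t=T$, getting the signs right so that $\partial_t v(T)$ really sits on the left, to be the main obstacle. My fallback is to choose the multiplier weight carefully, e.g.\ replacing the $\lambda w$ multiplier by $\lambda(1+\epsilon)w$, or to use $\partial_t v(T)=e^{\varphi}(\partial_tw+\lambda w)$.

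\textbf{Step 3: perturbation and summation.} Apply the scalar estimate to each component $v_i$ and sum over $i$. Write
\[
\square\vs=\mathcal{L}_{\mathcal{A},\mathcal{B},q}\vs-\mathcal{A}\,\partial_t\vs+\mathcal{B}\cdot\nabla_x\vs-(\widetilde{\mathcal{A}}+q)\vs .
\]
The coefficients are bounded in $L^\infty$ by $C(M_0)$, using the Sobolev embedding $H^{\frac{n+1}{2}+\alpha}\subset L^\infty$. Therefore
\[
\int e^{-2\varphi}|\square\vs|^2\le 2\int e^{-2\varphi}|\mathcal{L}\vs|^2+C\int e^{-2\varphi}\bigl(|\nabla_{(t,x)}\vs|^2+|\vs|^2\bigr).
\]
The estimate of Steps 1--2 actually controls $\lambda\int e^{-2\varphi}|\nabla_{(t,x)}\vs|^2$ and $\lambda^3\int e^{-2\varphi}|\vs|^2$, up to a factor $\lambda^{-1}$ relative to the right-hand side, so the scalar estimate holds with those stronger left-hand weights. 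The last term is therefore absorbed once $\lambda\ge\lambda_0(M_0,m,Q)$, giving \eqref{boundary carleman estimate}. The coupling through $q_{ij}$ is harmless: it is zeroth order, and $|q\vs|^2\le C|\vs|^2$ componentwise.
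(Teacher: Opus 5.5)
There is a genuine gap in Step 3. The linear weight $\varphi_\lambda=\lambda(t+x\cdot\omega)$ is a limiting weight. Its only interior gain is $4\lambda^2\|(\partial_t-\omega\cdot\nabla_x)w\|^2_{L^2(Q)}$. Through the Poincar\'e inequality along characteristics, this gives $\lambda^2\int_Q e^{-2\varphi_\lambda}|v|^2$ with a constant depending only on $Q$. Your multiplier $\lambda w$ in Step 2 only produces the indefinite Lorentzian quantity $\lambda(\|\nabla_x w\|^2-\|\partial_t w\|^2)$, so it does not yield the full gradient.

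Your claim that the estimate controls $\lambda^3\int_Q e^{-2\varphi_\lambda}|v|^2$ and $\lambda\int_Q e^{-2\varphi_\lambda}|\nabla_{(t,x)}v|^2$ is false. Take $w(t,x)=\chi(t)\psi(x+t\omega)$, with $\chi$ supported in a short time interval and $\psi$ of small support, so that $w\in C_c^\infty(Q)$, and set $v=e^{\varphi_\lambda}w$. Then:
\begin{itemize}
\item all $\Sigma$ and $t=T$ terms vanish;
\item $(\partial_t-\omega\cdot\nabla_x)w=\chi'(t)\psi(x+t\omega)$ does not depend on $\lambda$, so $\int_Q e^{-2\varphi_\lambda}|\square v|^2=\|\square w+2\lambda(\partial_t-\omega\cdot\nabla_x)w\|^2_{L^2(Q)}=O(\lambda^2)$;
\item yet $\lambda^3\int_Q e^{-2\varphi_\lambda}|v|^2=\lambda^3\|w\|^2_{L^2(Q)}$ and $\lambda\int_Q e^{-2\varphi_\lambda}|\nabla_{(t,x)}v|^2\approx 2\lambda^3\|w\|^2_{L^2(Q)}$.
\end{itemize}

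Without that extra power of $\lambda$ the absorption fails. Since $e^{-\varphi_\lambda}\nabla_{(t,x)}v=\nabla_{(t,x)}w+\lambda(1,\omega)w$, the first-order terms contribute
$\int_Q e^{-2\varphi_\lambda}|\mathcal{A}\partial_t\vs-\mathcal{B}\cdot\nabla_x\vs|^2\approx\lambda^2\int_Q|(\mathcal{A}-\omega\cdot\mathcal{B})\overrightarrow{w}|^2+\dots$.
This is of the same order $\lambda^2\|\overrightarrow{w}\|^2$ as your gain, but with a constant $C(M_0)$ that can exceed the geometric one. Taking $\lambda$ large does not help; you would need a smallness assumption on $(\mathcal{A},\mathcal{B})$. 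The zeroth-order coupling through $q$ is indeed harmless, as you say.

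The missing idea is a convexification of the weight by a parameter independent of $\lambda$. The paper uses $\varphi_{\lambda,\sigma}=\lambda(t+x\cdot\omega)-\sigma t^2/2$. Its cross term $J_1$ yields $\sigma\int_Q(|\partial_t\us|^2+|\nabla_x\us|^2)+\lambda^2\sigma\int_Q|\us|^2$, with $\us=e^{-\varphi_{\lambda,\sigma}}\vs$. Choosing $\sigma$ large in terms of $C(m,M_0,Q,\alpha)$ absorbs $\|I_3\us\|^2\le C(\|\nabla_{(t,x)}\us\|^2+\lambda^2\|\us\|^2+\|\us\|^2)$. Because $\sigma$ is then fixed, the factors $e^{\pm\sigma T^2}$ picked up when returning to the linear weight only change the constant $C$.
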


\begin{proof}
For $(t,x)\in Q$, we denote $\varphi(t,x):=t+x\cdot\omega$ and $\varphi_\lambda(t,x):=\lambda\,(t+x\cdot\omega)$, where $\o\in \mathbb{S}^{n-1}$. To absorb the first-order perturbation, we next define a perturbed weight function depending on an auxiliary parameter $\sigma$, where $\sigma$ is taken to be independent of $\lambda$ and will be fixed at a later stage. Specifically, for parameters satisfying $\lambda>\sigma>1$, we set
\[
 \varphi_{\lambda,\sigma}(t,x):=\varphi_{\lambda}(t,x)-\frac{\sigma t^2}{2}.
\]
Now if we denote $\overrightarrow{u}=e^{-\phi_{\lambda,\sigma}}\overrightarrow{v}$, we then have $\overrightarrow{u}\in \textbf{C}^2(\overline{Q})$. Also, we have
\begin{align*}
\overrightarrow{u}|_{\Sigma}=\overrightarrow{0},\ \overrightarrow{u}|_{t=0}=\partial_{t}\overrightarrow{u}|_{t=0}=\overrightarrow{0}.
\end{align*}
Next, we define the conjugated operator $\mathcal{L}_{\varphi_{\lambda,\sigma}}$ of $\mathcal{L}_{\mathcal{A},\mathcal{B},q}$ by \begin{align}
\mathcal{L}_{\varphi_{\lambda,\sigma}}\overrightarrow{u}:&=e^{-\varphi_{\lambda,\sigma}}\mathcal{L}_{\mathcal{A},\mathcal{B},q}e^{\varphi_{\lambda,\sigma}}\overrightarrow{u}\\&=\square \overrightarrow{u}-\sigma \overrightarrow{u}+(\lambda-\sigma t)^2\overrightarrow{u}-\lambda^2 \overrightarrow{u}\\&\qquad+2(\lambda -\sigma t)\partial_t\overrightarrow{u}-2\lambda \o\cdot \nabla_x\overrightarrow{u}\\&\qquad+\mathcal{A}\partial_t\overrightarrow{u}+(\lambda-\sigma t)\mathcal{A}\overrightarrow{u}-\mathcal{B}\cdot \nabla_x \overrightarrow{u}-\lambda(\o\cdot \mathcal{B})\overrightarrow{u}+(\widetilde{\mathcal{A}}+q)\overrightarrow{u}\\:& \triangleq I_1 \overrightarrow{u}+I_2 \overrightarrow{u}+I_3\overrightarrow{u}.
\end{align}
Now, using the triangle inequality, we observe that
\begin{align}\label{Def:Norm conjugated operator}
\begin{aligned}
\lVert\mathcal{L}_{\phi_{\lambda,\sigma}}\overrightarrow{u}\rVert^2_{\textbf{L}^2(Q)}&=\int_{Q} \lvert(I_1+I_2+I_3)\overrightarrow{u}\rvert^2~dxdt \\&\geq \dfrac{1}{2}\lVert(I_1+I_2) \overrightarrow{u}\rVert^2_{\textbf{L}^2(Q)}- \lVert I_3 \overrightarrow{u}\rVert^2_{\textbf{L}^2(Q)}\\&\geq \int_{Q} \Re\left(\left(I_1 \overrightarrow{u}\right)\cdot \overline{\left(
I_2 \overrightarrow{u}\right)}\right)~dxdt- \lVert I_3 \overrightarrow{u}\rVert^2_{\textbf{L}^2(Q)}.
\end{aligned}
\end{align}
We then further simplify the terms in the above expression to obtain a suitable lower bound for $\lVert\mathcal{L}_{\phi_{\lambda,\sigma}}\overrightarrow{u}\rVert^2_{\textbf{L}^2(Q)}$. To this end, we first simplify the first term on the right-hand side of the inequality above. Taking into account  $\overrightarrow{u}|_{\Sigma}=\overrightarrow{0},\ \overrightarrow{u}|_{t=0}=\partial_{t}\overrightarrow{u}|_{t=0}=\overrightarrow{0}$ and the divergence theorem, we achieve
\begin{align}
    J_1:&=\int_{Q} \Re\left(\left(I_1 \overrightarrow{u}\right)\cdot \overline{\left(
I_2 \overrightarrow{u}\right)}\right)~dxdt\\&=
 \int_Q (\lambda-\sigma t)\partial_t\left( \lvert \partial_t \overrightarrow{u}\rvert^2+\lvert \nabla_x \overrightarrow{u}\rvert^2
 \right)~dxdt-2\lambda \Re \int_{\O}\partial_t\overrightarrow{u}(T,x)\cdot (\overline{\o\cdot\nabla_x\overrightarrow{u}(T,x)})~dx\\&\qquad +\lambda \int_{Q}\o\cdot \nabla_x(\lvert \partial_t \overrightarrow{u}\rvert^2)~dxdt+\lambda \int_{\Sigma} (\o\cdot \nu)\lvert \partial_\nu \overrightarrow{u}\rvert^2~dS_x dt
 \\&\qquad +\int_{Q} \left(-\lambda \sigma +\sigma^2 t-\sigma^3t^3+3\lambda \sigma^2t^2-2\lambda^2\sigma t
 \right)\partial_t(\lvert \overrightarrow{u}\rvert^2) ~dxdt.
\end{align}
A further simplification will give us 
\begin{align}
    J_1&=\int_{\O}(\lambda-\sigma T)(\lvert \partial_t \overrightarrow{u}(T,x) \rvert^2+\lvert \nabla_x \overrightarrow{u}(T,x)\rvert^2)~dx +\sigma \int_{Q} (\lvert \partial_t \overrightarrow{u}\rvert^2+\lvert \nabla_x \overrightarrow{u}\rvert^2)~dxdt
\\ &\qquad -2\lambda \Re \int_{\O}\partial_t\overrightarrow{u}(T,x)\cdot (\overline{\o\cdot\nabla_x\overrightarrow{u}(T,x)})~dx+\lambda \int_{\Sigma} (\o\cdot \nu)\lvert \partial_\nu \overrightarrow{u}\rvert^2~dS_x dt\\ &\qquad +\int_{\O} (-\sigma^3T^3+3\lambda \sigma^2 T^2 +\sigma^2 T-2\lambda^2\sigma T-\lambda\sigma)\lvert \overrightarrow{u}(T,x)\rvert^2~dx\\ &\qquad -\int_{Q} (-3\sigma^3 t^2 +6\lambda \sigma^2 t+\sigma^2-2\lambda^2 \sigma)\lvert \overrightarrow{u}\rvert^2~dxdt.
\end{align}
Next, we determine suitable lower bounds for the coefficients appearing in the preceding expression, which are then used to derive a lower bound for $J_1$. Observe that
\begin{enumerate}
    \item If $\lambda\geq (\sigma +6\lambda \sigma T)^{1/2}$, then
    \[ 
  - ( -3\sigma^3 t^2 +6\lambda \sigma^2 t+\sigma^2-2\lambda^2 \sigma)\geq \lambda^2 \sigma.
    \]
    \item If $\lambda \geq \left(\sigma^2T^2+\frac{\lambda}{T}\right)^{1/2}$, then 
    \[-\sigma^3T^3+3\lambda \sigma^2 T^2 +\sigma^2 T-2\lambda^2\sigma T-\lambda\sigma\geq -3\lambda^2 \sigma T.\]
    \item If $\lambda \geq 2 \sigma T$, then
    \[\lambda -\sigma T\geq \frac{\lambda}{2}.\]
\end{enumerate}
Thus, we have 
\begin{align}\label{Bound: J_1}
\begin{aligned}
    J_1&\geq \frac{\lambda}{4}\int_{\O} \lvert \partial_t \overrightarrow{u}(T,x) \rvert^2~dx -\frac{7\lambda}{2}\int_{\O}\lvert \nabla_x \overrightarrow{u}(T,x)\rvert^2~dx+\sigma\int_{Q} (\lvert \partial_t \overrightarrow{u}\rvert^2+\lvert \nabla_x \overrightarrow{u}\rvert^2)~dxdt\\&\qquad+\lambda \int_{\Sigma} (\o\cdot \nu)\lvert \partial_\nu \overrightarrow{u}\rvert^2~dS_x dt+\lambda^2 \sigma \int_{Q} \lvert \overrightarrow{u}\rvert^2~dxdt-3\lambda^2 \sigma T \int_{\O}\lvert \overrightarrow{u}(T,x)\rvert^2~dx
    \end{aligned}
\end{align}
for $\lambda> \max \left\{(\sigma +6\lambda \sigma T)^{1/2}, \left(\sigma^2T^2+\frac{\lambda}{T}\right)^{1/2}, 2 \sigma T\right\}$.
Now, we seek an upper bound for the second term on the right-hand side of \eqref{Def:Norm conjugated operator}. Following Sobolev embedding, we obtain
\begin{align}\label{Bound: I3}
\begin{aligned}
    \lVert I_3 \overrightarrow{u}\rVert^2_{\textbf{L}^2(Q)}:&= \lVert\mathcal{A}\partial_t\overrightarrow{u}+(\lambda-\sigma t)\mathcal{A}\overrightarrow{u}-\mathcal{B}\cdot \nabla_x \overrightarrow{u}-\lambda(\o\cdot \mathcal{B})\overrightarrow{u}+(\widetilde{\mathcal{A}}+q)\overrightarrow{u}\rVert^2_{\textbf{L}^2(Q)}\\& \leq C\bigg(  M_0^2\lVert \partial_t\overrightarrow{u}\rVert^2_{\textbf{L}^2(Q)}+32\lambda^2M_0^2\lVert \overrightarrow{u}\rVert^2_{\textbf{L}^2(Q)}+32 m^2M_0^2\lVert \overrightarrow{u}\rVert^2_{\textbf{L}^2(Q)}\\&\qquad\quad +8M_0^2\lVert \nabla_x \overrightarrow{u}\rVert^2_{\textbf{L}^2(Q)}+326 M_0^2 \lVert \overrightarrow{u}\rVert^2_{\textbf{L}^2{(Q)}}+384M_0^4\lVert  \overrightarrow{u}\rVert^2_{\textbf{L}^2(Q)}\bigg)\\& \leq C(m,M_0,Q,\alpha)(\lVert \partial_t\overrightarrow{u}\rVert^2_{\textbf{L}^2(Q)}+\lambda^2\lVert \overrightarrow{u}\rVert^2_{\textbf{L}^2(Q)}+\lVert \overrightarrow{u}\rVert^2_{\textbf{L}^2(Q)}+\lVert \nabla_x \overrightarrow{u}\rVert^2_{\textbf{L}^2(Q)})
    \end{aligned}
\end{align}
for $\lambda \geq 2\sigma T.$ We then combine \eqref{Bound: J_1} and \eqref{Bound: I3} to achieve
\begin{align}
    \begin{aligned}
\lVert\mathcal{L}_{\phi_{\lambda,\sigma}}\overrightarrow{u}\rVert^2_{\textbf{L}^2(Q)}&\geq  \frac{\lambda}{4}\int_{\O} \lvert \partial_t \overrightarrow{u}(T,x) \rvert^2~dx -\frac{7\lambda}{2}\int_{\O}\lvert \nabla_x \overrightarrow{u}(T,x)\rvert^2~dx+\sigma\int_{Q} (\lvert \partial_t \overrightarrow{u}\rvert^2+\lvert \nabla_x \overrightarrow{u}\rvert^2)~dxdt\\&\qquad+\lambda \int_{\Sigma} (\o\cdot \nu)\lvert \partial_\nu \overrightarrow{u}\rvert^2~dS_x dt+\lambda^2 \sigma \int_{Q} \lvert \overrightarrow{u}\rvert^2~dxdt-3\lambda^2 \sigma T \int_{\O}\lvert \overrightarrow{u}(T,x)\rvert^2~dx\\&\qquad-C(m,M_0,Q,\alpha)(\lVert \partial_t\overrightarrow{u}\rVert^2_{\textbf{L}^2(Q)}+\lambda^2\lVert \overrightarrow{u}\rVert^2_{\textbf{L}^2(Q)}+\lVert \overrightarrow{u}\rVert^2_{\textbf{L}^2(Q)}+\lVert \nabla_x \overrightarrow{u}\rVert^2_{\textbf{L}^2(Q)}).
    \end{aligned}
\end{align}
By choosing $\sigma = C(m,M_0,Q,\alpha) + 6$, we consequently obtain the following bound:
\begin{align}
    \begin{aligned}
\lVert\mathcal{L}_{\phi_{\lambda,\sigma}}\overrightarrow{u}\rVert^2_{\textbf{L}^2(Q)}&\geq  \frac{\lambda}{4}\int_{\O} \lvert \partial_t \overrightarrow{u}(T,x) \rvert^2~dx -7\lambda\int_{\O}\lvert \nabla_x \overrightarrow{u}(T,x)\rvert^2~dx+2\int_{Q} (\lvert \partial_t \overrightarrow{u}\rvert^2+\lvert \nabla_x \overrightarrow{u}\rvert^2)~dxdt\\&\qquad+\lambda \int_{\Sigma} (\o\cdot \nu)\lvert \partial_\nu \overrightarrow{u}\rvert^2~dS_x dt+5\lambda^2  \int_{Q} \lvert \overrightarrow{u}\rvert^2~dxdt-6\lambda^3 \sigma T \int_{\O}\lvert \overrightarrow{u}(T,x)\rvert^2~dx.
    \end{aligned}
\end{align}
Subsequently, we substitute $\overrightarrow{u}$ by $e^{-\varphi_{\lambda,\sigma}}\overrightarrow{v}$ in the preceding expression and simplify to obtain
\begin{align}
    \begin{aligned}
       e^{\sigma T^2} \lVert e^{-\varphi_{\lambda}}\mathcal{L}_{\mathcal{A},\mathcal{B},q}\overrightarrow{v}\rVert^2_{\textbf{L}^2(Q)}&\geq \lambda \int_{\Sigma_+} e^{-2\varphi_\lambda}(\o\cdot \nu)\lvert \partial_\nu \overrightarrow{v}\rvert^2~dS_x dt-\lambda e^{\sigma T^2}\int_{\Sigma_-} e^{-2\varphi_\lambda}(\o\cdot \nu)\lvert \partial_\nu \overrightarrow{v}\rvert^2~dS_x dt\\&\quad+\frac{\lambda}{8}\int_{\O}e^{-2\varphi_{\lambda}}\lvert \partial_t \overrightarrow{v}(T,x)\rvert^2~dx-(6\sigma T+14 e^{\sigma T^2} +1)\lambda^3 \int_\O e^{-2\varphi_{\lambda}}\lvert \overrightarrow{v}(T,x)\rvert^2~dx\\&\quad+\int_{Q}e^{-2\varphi_{\lambda}} (\lvert \partial_t \overrightarrow{v}\rvert^2+\lvert \nabla_x \overrightarrow{v}\rvert^2)~dxdt+\lambda^2\int_{Q}e^{-2\varphi_{\lambda}} \lvert \overrightarrow{v}\rvert^2~dxdt\\&\quad-14\lambda e^{\sigma T^2}\int_\O e^{-2\varphi_{\lambda}}\lvert \nabla_x\cdot\overrightarrow{v}(T,x)\rvert^2~dx.
    \end{aligned}
\end{align}
Upon further simplification, and invoking the uniform bound $e^{\sigma T^{2}}$, the above expression reduces to
\begin{align}
\begin{aligned}
&\int_{Q}e^{-2\ld(t+x\cdot \og)}\left(\ld^2|\vs(t,x)|^2+|\nabla_{(t,x)}\vs(t,x)|^2\right) dxdt+\ld \int\limits_{\Sigma_{+,\omega}}e^{-2\ld(t+x\cdot \og)}|\omega\cdot \nu(x)||\partial_{\nu}\vs|^2dSdt\\
&\quad +\ld\int_{\Omega} e^{-2\ld(T+x\cdot \og)}|\dd_t\vs(T,x)|^2\ dx\le C\int_{\Omega} e^{-2\lambda(T+x\cdot\omega)}\left(\lambda^3|\vs(T,x)|^2+\ld|\nabla_x{\vs(T,x)}|^2\right)dx\\
&\quad+C \bigg(\int_{Q}e^{-2\lambda(t+x\cdot\omega)}|\mathcal{L}_{\mathcal{A},\mathcal{B},q}\vs(t,x)|^2dxdt+\lambda\int\limits_{\Sigma_{-,\omega}}{ e^{-2\lambda(t+x\cdot\omega)}|\omega\cdot \nu(x)||\partial_{\nu}\vs|^2}dS \bigg).
\end{aligned}
\end{align}
Thus the desired estimate \eqref{boundary carleman estimate} is obtained. 
\end{proof}
Afterwards, we state the interior estimate which follows from the Theorem \ref{Theorem: Boundary Carleman estimate}.
\begin{corollary} (\tb{Interior Carleman estimates})
Given $(\mc{A},\mc{B},q)\in \mc{T}(M_0,\alpha)$ and $\vs\in \CS_c^{\iy}(Q)$. Then there exist $C>0$ and $\ld_0>0$ depending only on $m,M_0,\alpha$ and $Q$ such that the following estimate
\begin{align}\label{boundary carleman estimate--1}
\int_{Q}  e^{-2\ld(t+x\cdot \og)}\left(\ld^2|\vs(t,x)|^2+|\nabla_{(t,x)}\vs(t,x)|^2\right) dxdt\le C \int_{Q} e^{-2\lambda(t+x\cdot\omega)}|\mathcal{L}_{\mathcal{A},\mathcal{B},q}\vs(t,x)|^2dxdt\nn,
\end{align}
 holds for any $\ld\ge\ld_0$.
\end{corollary}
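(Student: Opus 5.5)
The corollary should follow directly from Theorem \ref{Theorem: Boundary Carleman estimate}, applied with the same $\omega\in\Sb^{n-1}$ and the same $(\mc{A},\mc{B},q)\in\mc{T}(M_0,\alpha)$. The plan is to check that a compactly supported $\vs$ meets the hypotheses of that theorem, and then observe that every boundary and final-time term in \eqref{boundary carleman estimate} vanishes.

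\emph{Step 1: hypotheses.} Let $\vs\in\CS_c^\infty(Q)$. Its support is a compact subset $K\subset Q=(0,T)\times\Omega$. Extending by zero gives $\vs\in\CS^2(\overline Q)$. Since $K$ stays away from $\Sigma$ and from the slices $t=0$ and $t=T$, we have
\[
\vs|_\Sigma=\overrightarrow{0},\qquad \vs|_{t=0}=\partial_t\vs|_{t=0}=\overrightarrow{0},
\]
so the theorem applies for every $\lambda\ge\lambda_0$. The constants $\lambda_0$ and $C$ come from that theorem and depend only on $m,M_0,\alpha,Q$.

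\emph{Step 2: drop the extra terms.} Because $\vs$ vanishes in a neighbourhood of $\Sigma$, we get $\partial_\nu\vs=\overrightarrow{0}$ on $\Sigma$. Hence both boundary integrals, over $\Sigma_{+,\omega}$ and over $\Sigma_{-,\omega}$, are zero. Because $\vs$ vanishes near $t=T$, the functions $\vs(T,\cdot)$, $\nabla_x\vs(T,\cdot)$ and $\partial_t\vs(T,\cdot)$ are all zero. This removes the final-time integral on the left and the $\lambda^3|\vs(T,x)|^2+\lambda|\nabla_x\vs(T,x)|^2$ term on the right. What remains of \eqref{boundary carleman estimate} is exactly \eqref{boundary carleman estimate--1}.

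There is no genuine obstacle here. The one thing to watch is that the weight used in the theorem is $\varphi_\lambda=\lambda(t+x\cdot\omega)$, not the modified weight $\varphi_{\lambda,\sigma}$. The factor $e^{\sigma T^2}$ that comes from the $\sigma t^2/2$ modification is uniformly bounded and has already been absorbed into $C$ inside the theorem, so no new dependence on $\lambda$ enters.
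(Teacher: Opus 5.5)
Your argument is correct and is exactly the route the paper intends: the paper gives no separate proof and presents the corollary as an immediate consequence of Theorem~\ref{Theorem: Boundary Carleman estimate}, since a $\CS_c^{\iy}(Q)$ field satisfies that theorem's vanishing hypotheses and all the boundary and $t=T$ terms then drop out. The nonnegative left-hand terms over $\Sigma_{+,\omega}$ and at $t=T$ could even be discarded without the support condition; only the two right-hand extra terms (the $\Sigma_{-,\omega}$ integral and the $t=T$ integral) actually need $\vs$ to vanish near $\Sigma$ and near $t=T$.
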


\section{Geometric optics (GO) solutions}\label{section-GO solution}
In this section, we construct GO solutions to the operator 
$
\mathcal{L}_{\mathcal{A},\mathcal{B},q}\overrightarrow{u} = \overrightarrow{0}
$
as well as to its adjoint operator
$
\mathcal{L}^{*}_{\mathcal{A},\mathcal{B},q}\overrightarrow{v} = \overrightarrow{0}.
$
The derivation of these solutions relies on an interior Carleman estimate in negative-order Sobolev spaces, formulated in Lemmas 5.3 and 5.4 of \cite{kian2016recovery}. Before invoking these results, we introduce the function spaces employed throughout this section.

For any $k \in \mathbb{R}$, we define
\[
H^k_\lambda(\mathbb{R}^{1+n}) := \Big\{ u \in \mathcal{S}'(\mathbb{R}^{1+n}) : \big(|(\tau,\xi)|^2 + \lambda^2\big)^{\frac{k}{2}} \hat{u} \in L^2(\mathbb{R}^{1+n}) \Big\},
\]
with the norm
\begin{equation}
\|u\|^2_{H_{\lambda}^k(\mathbb{R}^{1+n})} := \int_{\mathbb{R}^{1+n}} \big(|(\tau,\xi)|^2 + \lambda^2\big)^k \lvert \hat{u}(\tau,\xi) \rvert^2 \, d\tau \, d\xi,
\label{parameter_norm}
\end{equation}
where $\mathcal{S}'$ denotes the space of tempered distributions on $\mathbb{R}^{1+n}$, and $\hat{u}$ is the space-time Fourier transform of $u$. Here we use the notation
\[
(\tau,\xi):= (\tau^2 + \lvert \xi \rvert^2)^{1/2}.
\]
In particular, we have 
\[\lVert u\rVert_{H^1_\lambda(Q)}^2:= \lambda^2\lVert u\rVert_{L^2{(Q)}}^2+\lVert \nabla_{(x,t)} u\rVert_{L^2{(Q)}}^2.\]

We are now in a position to state the existence of exponentially growing solutions to the equation \(\mathcal{L}_{\mathcal{A},\mathcal{B},q}\overrightarrow{u} = \overrightarrow{0}\), which is formulated in the following theorem.

\begin{proposition}\label{proposition for GO sol 1}
Let $(\mc{A},\mc{B},q)\in \mc{T}(M_0,\alpha)$, $\omega\in\mathbb{S}^{n-1}$, and $\phi_j\in C_c^\infty(\mathbb{R}^n)$ for each $1\leq j\leq m$. Define
\[
\mathcal{C}^{j}(t,x):=(A^j_0,\overrightarrow{A}^j)=(A^j_0,A^j_{1},A^j_{2},\cdots,A^j_{n}), \quad j=1,2,\dots,m.
\]
Then, there exist constants $C,\lambda_0>0$, depending only on $m,M_0,\alpha$ and $Q$  such that the operator
$
\mathcal{L}_{\mathcal{A},\mathcal{B},q}\overrightarrow{u}=\overrightarrow{0}
$
admits an exponentially growing GO solution of the form
\begin{equation}\label{GO solution for without adjoint operator}        
\overrightarrow{u_g}(t,x)=e^{\lambda(t+x\cdot\omega)}\Big(\overrightarrow{B_g}(t,x)+\overrightarrow{R}_{g,\lambda}(t,x)\Big),
\end{equation}
where the $j$-th component of $\overrightarrow{B_g}$ is given explicitly by
\[
B^j_g(t,x)=K^j\,\phi_j(x+t\omega)\exp\!\Bigg(-\int_0^t (1,-\omega)\cdot\mathcal{C}^{j}\big(s,x+(t-s)\omega\big)\,ds\Bigg),
\]
and $K^j$ is a constant.
Also, the remainder term $\overrightarrow{R}_{g,\lambda}$ belongs to $\HS^2(Q)$ satisfying
\begin{equation}
   \big\lVert \overrightarrow{R}_{g,\lambda}\big\rVert_{\HS^k(Q)}\leq C\,\lambda^{-1+k}\big\lVert\overrightarrow{\phi}\big\rVert_{\HS^3(\mathbb{R}^n)},\quad k=0,1,2,
\end{equation}
holds, where $\overrightarrow{\phi}=(\phi_1,\,\phi_2,\,\dots,\,\phi_m)^T$.
\end{proposition}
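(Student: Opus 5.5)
The plan is the standard WKB ansatz followed by a Carleman-duality argument that solves for the remainder. First I will substitute $\overrightarrow{u_g}=e^{\varphi_\lambda}(\overrightarrow{B_g}+\overrightarrow{R}_{g,\lambda})$, with $\varphi_\lambda=\lambda(t+x\cdot\omega)$, into $\mathcal{L}_{\mathcal{A},\mathcal{B},q}\overrightarrow{u_g}=\overrightarrow 0$ and expand the conjugated operator as in the proof of Theorem \ref{Theorem: Boundary Carleman estimate} (with $\sigma=0$):
\[
e^{-\varphi_\lambda}\mathcal{L}_{\mathcal{A},\mathcal{B},q}e^{\varphi_\lambda}\overrightarrow{w}=\mathcal{L}_{\mathcal{A},\mathcal{B},q}\overrightarrow{w}+2\lambda(\partial_t-\omega\cdot\nabla_x)\overrightarrow{w}+\lambda\big(\mathcal{A}-\omega\cdot\mathcal{B}\big)\overrightarrow{w}.
\]
The $\lambda^2$ terms cancel because $(1,\omega)$ is null. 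Since $\mathcal{A}$ and $\mathcal{B}$ are diagonal, the $O(\lambda)$ part decouples into the scalar transport equations
\[
2(\partial_t-\omega\cdot\nabla_x)B_g^j+(1,-\omega)\cdot\mathcal{C}^j\,B_g^j=0,\qquad j=1,\dots,m.
\]
The coupling matrix $q$ appears only at order $\lambda^0$. I will check directly that the stated $B_g^j$ solves these equations: $\phi_j(x+t\omega)$ is annihilated by $\partial_t-\omega\cdot\nabla_x$, and the exponential factor is differentiated along the characteristic $s\mapsto (s,x+(t-s)\omega)$. The constant in front of the integral (I expect $\tfrac12$ from the factor $2$) is absorbed into the normalization $K^j$ or the convention for $\mathcal{C}^j$. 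Smoothness and compact support of $\mathcal{C}^j$, together with the $\mathbf H^{\frac{n+1}{2}+\alpha}$ bound, give $\|\overrightarrow{B_g}\|_{\mathbf H^k(Q)}\le C\|\overrightarrow\phi\|_{\mathbf H^k}$ with $C=C(M_0,m,Q)$.

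Next, the remainder must satisfy
\[
e^{-\varphi_\lambda}\mathcal{L}_{\mathcal{A},\mathcal{B},q}e^{\varphi_\lambda}\overrightarrow{R}_{g,\lambda}=-\mathcal{L}_{\mathcal{A},\mathcal{B},q}\overrightarrow{B_g}=:\overrightarrow{F},
\]
with $\|\overrightarrow F\|_{\mathbf L^2(Q)}\le C\|\overrightarrow\phi\|_{\mathbf H^3}$; one extra derivative is reserved for the $\mathbf H^1$ and $\mathbf H^2$ bounds. To solve this I will use the duality method of \cite{kian2016recovery} (Lemmas 5.3 and 5.4 there). The interior Carleman estimate of the Corollary is applied to the formal adjoint $\mathcal{L}^*_{\mathcal{A},\mathcal{B},q}$. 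That adjoint has the same structure, with $-\mathcal{A}$, $-\mathcal{B}$ and $q^T$, so it stays in a comparable admissible class. The estimate is used with the opposite weight $e^{+\varphi_\lambda}$, which amounts to replacing $(t,x)\mapsto(-t,-x)$, and then shifted in index to weighted $H^{-1}_\lambda$–$H^{-2}$ form:
\[
\|\overrightarrow w\|_{\mathbf H^{-1}_\lambda}\le C\|e^{\varphi_\lambda}\mathcal{L}^*e^{-\varphi_\lambda}\overrightarrow w\|_{\mathbf H^{-2}_\lambda}.
\]
The shift is done by the usual pseudodifferential argument: conjugate by $(\lambda^2-\Delta_{t,x})^{-1/2}$ and treat the lower-order coefficients as commutator errors. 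Then I define a linear functional on the range of the conjugated adjoint, bound it by the estimate above, extend it by Hahn–Banach, and apply Riesz representation. This yields $\overrightarrow{R}_{g,\lambda}\in\mathbf H^1_\lambda$ solving the equation with $\|\overrightarrow R\|_{\mathbf H^1_\lambda}\le C\|\overrightarrow F\|_{\mathbf L^2}$. That gives the $k=0,1$ bounds.

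For $k=2$ I will differentiate the remainder equation, or appeal to interior elliptic-type regularity of the conjugated operator, and use $\overrightarrow F\in\mathbf H^1$ to reach $\|\overrightarrow R\|_{\mathbf H^2}\le C\lambda\|\overrightarrow\phi\|_{\mathbf H^3}$. The main obstacle I expect is the index shift of the Carleman estimate for the coupled system with first-order terms. The first-order coefficients produce commutators of size $O(\lambda)$ relative to the $H^{-1}_\lambda$ norm, and these cannot simply be absorbed by the $\lambda$ gain. My first attempt would be to reuse the $\sigma t^2/2$ convexification from Theorem \ref{Theorem: Boundary Carleman estimate}, so that $\sigma$ chosen large relative to $M_0$ absorbs them. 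The matrix coupling through $q$ is zeroth order and harmless.
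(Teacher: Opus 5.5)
Your route is the paper's: a WKB ansatz, scalar transport equations that decouple because $\mathcal{A}$ and $\mathcal{B}$ are diagonal, and Hahn--Banach/Riesz duality against a negative-order Carleman estimate for the conjugated adjoint. The paper cites \cite[Lemma 5.4]{kian2016recovery} for the shifted estimate rather than deriving it. It also keeps $q$ out of the conjugated operator and absorbs it afterwards by inverting $I+qT$ with a Neumann series, using $\|T\|_{\HS^1_\lambda\to\HS^1_\lambda}\le C/\lambda$. Putting $q^T$ into the Carleman estimate, as you do, also works, since the admissible class gives $q\in W^{1,\infty}$.

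There is, however, a genuine gap in the case $k=2$. One duality step gains exactly one level of regularity. From data $\overrightarrow F\in\LS^2$ you obtain $\overrightarrow R\in\HS^1_\lambda$ with $\|\overrightarrow R\|_{\HS^1_\lambda}\le C\|\overrightarrow F\|_{\LS^2}$. That conclusion needs an $L^2$--$H^{-1}_\lambda$ estimate, not the $H^{-1}_\lambda$--$H^{-2}_\lambda$ estimate you wrote down. Neither of your proposed upgrades to $\HS^2$ works:
\begin{itemize}
\item The conjugated operator has principal part $\square$, so there is no elliptic regularity to appeal to.
\item Differentiating $P\overrightarrow R=\overrightarrow F$ does not help. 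The Hahn--Banach solution is neither compactly supported nor unique: you can add $e^{-\lambda(t+x\cdot\omega)}\overrightarrow w$ for any solution of $\mathcal{L}_{\mathcal{A},\mathcal{B},q}\overrightarrow w=\overrightarrow 0$. So no Carleman or energy estimate controls $\partial\overrightarrow R$.
\end{itemize}
The fix is exactly the paper's argument. Use the $H^{-1}_\lambda$--$H^{-2}_\lambda$ estimate and bound the functional by $\|\overrightarrow w\|_{\HS^{-1}_\lambda}\|\overrightarrow F\|_{\HS^1_\lambda}$. Since $\|\mathcal{L}_{\mathcal{A},\mathcal{B},q}\overrightarrow{B_g}\|_{\HS^1_\lambda(Q)}\le C\lambda\|\overrightarrow\phi\|_{\HS^3(\mathbb{R}^n)}$, this gives a single remainder with $\|\overrightarrow R\|_{\HS^2_\lambda}\le C\lambda\|\overrightarrow\phi\|_{\HS^3(\mathbb{R}^n)}$. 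Because $\lambda^{2-k}\|\overrightarrow R\|_{\HS^k(Q)}\le C\|\overrightarrow R\|_{\HS^2_\lambda}$ for $\lambda\ge 1$, all three bounds $k=0,1,2$ follow at once.

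Two smaller points. First, the factor $\tfrac12$ you expect in the exponent cannot be absorbed into the constant $K^j$. It comes from the compact form \eqref{Govern operator: compact form}, which drops a factor $2$ in front of $\mathcal{A}\partial_t$ and $\mathcal{B}\cdot\nabla_x$. Conjugating the product form \eqref{Def: Govern operator} directly gives the $O(\lambda)$ term $2\lambda(1,-\omega)\cdot(\nabla_{t,x}B^j_g+\mathcal{C}^jB^j_g)$, and the stated $B^j_g$ solves the resulting transport equation exactly.

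Second, the obstacle you anticipate in the index shift does not arise. The lower-order terms of critical size are already absorbed at the $L^2$ level by the $\sigma t^2/2$ convexification in Theorem \ref{Theorem: Boundary Carleman estimate}. Their commutators with $(\lambda^2-\Delta_{t,x})^{-s/2}$ gain a factor $\lambda^{-1}$ relative to the Carleman gain, so they can be absorbed for large $\lambda$.
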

To this end, we require a Carleman estimate in negative-order Sobolev spaces. A relevant result is provided by (Lemma 5.4, \cite{kian2016recovery}), where such an estimate is established. However, an additional shift of order $-1$ is required in present setting, which can be derived by similar arguments, for the operator defined below.
\begin{lemma}\cite[Lemma 5.4]{kian2016recovery}\label{Lemma for GO sol}
For $(\mc{A},\mc{B},q)\in \mc{T}(M_0,\alpha)$, let us consider the conjugated operator $$P_{\mc{A},\mc{B},\ld,\og}=e^{-\ld(x\cdot\og+t)}(\mathcal{L}_{\mathcal{A},\mc{B},q}-q)e^{\ld(x\cdot\og+t)}.$$ There exist constants $C,\ld_0>0$ depending on $M_0,\alpha$ and $Q$ such that for $\us\in \CS_c^{\iy}(Q)$ and $\ld\ge\ld_0$
\begin{align*}
\|\us\|_{\HS^{-1}_{\ld}(\mb{R}^{1+n})}\le C\|P_{\mc{A},\mc{B},\ld,\og}\us\|_{\HS^{-2}_{\ld}(\mb{R}^{1+n})}.
\end{align*} 	
\end{lemma}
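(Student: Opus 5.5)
Since $\mc{A}$, $\mc{B}$ and $\widetilde{\mc{A}}$ are diagonal, the operator $P_{\mc{A},\mc{B},\ld,\og}$ acts componentwise, and the $j$-th component is the scalar conjugated relativistic operator
\[
P_j w = \square w + 2\ld(\partial_t - \og\cdot\nabla_x)w + A^j_0\partial_t w - \overrightarrow{A}^j\cdot\nabla_x w + \ld\big(A^j_0 - \og\cdot\overrightarrow{A}^j\big)w + \widetilde{A^j}w .
\]
The $\HS^{-1}_\ld$ and $\HS^{-2}_\ld$ norms are sums of the componentwise norms. It therefore suffices to prove the scalar estimate $\|w\|_{H^{-1}_\ld}\le C\|P_jw\|_{H^{-2}_\ld}$ for $w\in C_c^\infty(Q)$, with $C$ uniform in $j$. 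This is exactly where the coupling through $q$ would have caused trouble, and it is why $q$ is removed in the definition of $P_{\mc{A},\mc{B},\ld,\og}$. I will take as the starting point the $L^2$--$H^{-1}$ version from \cite[Lemma 5.3--5.4]{kian2016recovery}: $\|w\|_{L^2}\le C\|P_jw\|_{H^{-1}_\ld}$ for $w$ compactly supported in a slightly larger open set $Q'\Supset Q$. To obtain it on $Q'$, I extend the coefficients (already in $C_c^\infty(Q)$) by zero. That estimate is itself obtained from the interior Carleman estimate of the Corollary above, applied with the convexified weight $\varphi_{\ld,\sigma}$, followed by one shift in the same manner as described next. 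Thus the present lemma amounts to repeating that shift once more.

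The shift goes as follows. Let $\Lambda^s$ be the Fourier multiplier with symbol $(|(\tau,\xi)|^2+\ld^2)^{s/2}$, so that $\|w\|_{H^{k}_\ld}=\|\Lambda^{k}w\|_{L^2}$. Fix $\chi\in C_c^\infty(Q')$ with $\chi=1$ on a neighbourhood of $\overline{Q}$, and for $w\in C_c^\infty(Q)$ set $z:=\chi\Lambda^{-1}w\in C_c^\infty(Q')$. Applying the $L^2$--$H^{-1}_\ld$ estimate to $z$ gives
\[
\|\chi\Lambda^{-1}w\|_{L^2}\le C\|P_j(\chi\Lambda^{-1}w)\|_{H^{-1}_\ld}
\le C\|\chi\Lambda^{-1}P_jw\|_{H^{-1}_\ld}+C\|[P_j,\chi\Lambda^{-1}]w\|_{H^{-1}_\ld}.
\]
The first term on the right is bounded by $C\|P_jw\|_{H^{-2}_\ld}$, since multiplication by $\chi$ is bounded on $H^{-1}_\ld$ uniformly in $\ld$. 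On the left, $\|\Lambda^{-1}w\|_{L^2}\le\|\chi\Lambda^{-1}w\|_{L^2}+\|(1-\chi)\Lambda^{-1}w\|_{L^2}$. Since $\supp w\subset Q$ is at positive distance from $\supp(1-\chi)$, the operator $(1-\chi)\Lambda^{-1}$ restricted to such $w$ has a smooth, rapidly decaying kernel. Hence $\|(1-\chi)\Lambda^{-1}w\|_{L^2}\le C_N\ld^{-N}\|w\|_{H^{-1}_\ld}$, and this term is absorbed for $\ld$ large.

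The main obstacle is the commutator. We must show $\|[P_j,\chi\Lambda^{-1}]w\|_{H^{-1}_\ld}\le C\ld^{-1}\|w\|_{H^{-1}_\ld}$ plus terms already absorbed, so that it too can be absorbed for $\ld\ge\ld_0$. I split $P_j=P_0+P_1$, where $P_0=\square+2\ld(\partial_t-\og\cdot\nabla_x)$ has constant coefficients, and $P_1$ contains the terms with $A^j_0$, $\overrightarrow{A}^j$, $\widetilde{A^j}$.
\begin{itemize}
\item Since $P_0$ commutes with $\Lambda^{-1}$, we have $[P_0,\chi\Lambda^{-1}]=[P_0,\chi]\Lambda^{-1}$. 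This is a first-order operator plus $\ld$ times a zeroth-order operator, with coefficients supported in $\supp\nabla\chi$, which is disjoint from $\supp w$. Again by the separation of supports it is $O(\ld^{-N})$.
\item For $P_1$ I use the parameter-dependent symbol calculus, with symbols in $(\tau,\xi,\ld)$ and the order counted jointly. For a smooth coefficient $a$, the commutator $[a,\Lambda^{-1}]$ has order $-2$. Hence $[a\partial,\Lambda^{-1}]$ and $[\ld a,\Lambda^{-1}]$ have order $-1$, and map $H^{-1}_\ld\to H^{0}_\ld$. Measured in $H^{-1}_\ld$, each therefore gains a factor $\ld^{-1}$, giving a bound $C\ld^{-1}\|w\|_{H^{-1}_\ld}$.
\end{itemize}
The seminorms of the coefficients that enter these constants must be controlled only through $M_0$. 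Here I would use the bound in $H^{\frac{n+1}{2}+\alpha}(Q)$ and the Sobolev embedding into $C^{1}$, and check that finitely many derivatives of the coefficients suffice in the composition formula. If that is too tight, the fallback is to write the commutator as an explicit integral of $\nabla a$ against $\Lambda^{-2}$-type kernels. Combining the three bounds and choosing $\ld_0$ large yields $\|w\|_{H^{-1}_\ld}\le C\|P_jw\|_{H^{-2}_\ld}$, and summing over $j=1,\dots,m$ gives the lemma.
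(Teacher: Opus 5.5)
Your argument is correct and follows the route the paper intends: the paper gives no proof beyond citing \cite[Lemma 5.4]{kian2016recovery} and asserting that one further shift of order $-1$ ``can be derived by similar arguments.'' Your proposal supplies exactly that shift: the componentwise reduction (the operator without $q$ is diagonal), then the $\chi\Lambda^{-1}$ conjugation, with the off-support and constant-coefficient commutator terms of size $O(\lambda^{-N})$ and the variable-coefficient commutators gaining a factor $\lambda^{-1}$.

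One small correction to your commentary, which does not affect the proof: $q$ is removed not because of the coupling, but because the construction only assumes $q\in L^\infty$, and multiplication by such $q$ is not controlled on $\HS^{-1}_\lambda$. A zeroth-order matrix term with the regularity of $\mc{T}(M_0,\alpha)$ would simply be one more absorbable $O(\lambda^{-1})$ perturbation.
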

\begin{proof}[Proof of Proposition \ref{proposition for GO sol 1}] The proof is based on the Hahn-Banach and  Reis\'z representation theorem. 
 For this purpose, we consider the subspace 
 \[
 \mathcal{P}:=\{P_{-\mc{A},-\mc{B},-\ld,\og}\vs:\ \vs\in \CS_c^{\iy}(Q)\}
 \]
and 
define a linear map $\mc{S}: \mathcal{P}\to\R$ by
\begin{align}\label{Def: operator S}
\mc{S}(P_{-\mc{A},-\mc{B},-\ld,\og}\vs)=\int_{Q}\vs(t,x)\cdot\overrightarrow{f}(t,x)\ dxdt,\;~\overrightarrow{f}\in \HS^1_\ld(Q).
\end{align}
Following  Lemma \ref{Lemma for GO sol}, we conclude that  
\begin{align*}
|\mc{S}(P_{-\mc{A},-\mc{B},-\ld,\og}\vs)|\le \lVert \overrightarrow{v}\rVert_{\HS^{-1}_{\ld}(\mb{R}^{1+n})}\lVert \overrightarrow{f}\rVert_{\HS^{1}_{\ld}(Q)} \leq C
\|P_{-\mc{A},-\mc{B},-\ld,\og}\vs\|_{\HS^{-2}_{\ld}(\mb{R}^{1+n})}\|\overrightarrow{f}\|_{\HS^{1}_{\ld}(Q)}.
\end{align*}
Thus, $\mc{S}$ is a continuous linear operator.
Using the Hahn-Banach theorem, we now extend $\mc{S}$ to a continuous functional on $\HS^{-2}_{\ld}(\mb{R}^{1+n})$ and  still denoted by $\mc{S}$. Also, we have  $\|\mc{S}\|\le C\|\overrightarrow{f}\|_{\HS^{1}_{\ld}(Q)} $. Following the Reis\'z representation theorem, we have a unique $\us\in{\HS^{2}_{\ld}(\R^{1+n})}$ such that 
\begin{align}\label{exist1}
\mc{S}(\vs)=(\vs,\us)_{{\HS^{-2}_{\ld}(\R^{1+n})},{\HS^{2}_{\ld}(\R^{1+n})}},~~ \vs\in \CS_c^{\iy}(Q)
\end{align}
and 
\begin{align}
    \|\us\|_{{\HS^{2}_{\ld}(\R^{1+n})}}\le \|\mc{S}\|\le C\|\overrightarrow{f}\|_{\HS^{1}_{\ld}(Q)}.
\end{align}
Combining \eqref{Def: operator S} and \eqref{exist1} together, we have  
\begin{align}\label{exist2}
\begin{aligned}
\mc{S}(P_{-\mc{A},-\mc{B},-\ld,\og}\vs)=\int_{Q}\vs(t,x)\cdot\overrightarrow{f}(t,x)\ dxdt=(P_{-\mc{A},-\mc{B},-\ld,\og}\vs,\us)_{{\HS^{-2}_{\ld}(\R^{1+n})},{\HS^{2}_{\ld}(\R^{1+n})}},
\end{aligned}
\end{align}
for all $\vs\in \CS_c^{\iy}(Q)$.
From \eqref{exist2}, we conclude that there exist $\us\in{\HS^{2}_{\ld}(\R^{1+n})}$ such that 
\begin{align*}
P_{\mc{A},\mc{B},\ld,\og}\us=\overrightarrow{f}\m{ with  }\|\us\|_{{\HS^{2}_{\ld}(\R^{1+n})}}C\le\|\overrightarrow{f}\|_{\HS^{1}_{\ld}(Q)}.
\end{align*} 
Now, we seek for $\us_g$ solves $\mathcal{L}_{\mathcal{A}, \mathcal{B},q}\overrightarrow{u}=\overrightarrow{0}$, having the form
\begin{align}
&\quad \quad \us_g(t,x)=e^{\ld(x\cdot\og+t)}(\overrightarrow{B_g}(t,x)+\overrightarrow{R}_{g,\lambda}(t,x)).\nn
\end{align}
To do so, it is sufficient to find $\overrightarrow{B_g}$ and $\overrightarrow{R}_{g,\lambda}$ satisfying \[ e^{-\ld(x\cdot\og+t)}\mathcal{L}_{\mathcal{A},\mc{B},q}e^{\ld(x\cdot\og+t)}(\overrightarrow{B_g}(t,x)+\overrightarrow{R}_{g,\lambda}(t,x))=\overrightarrow{0}.\]
Equivalently,  \[P_{\mathcal{A},\mc{B},\ld,\og}\overrightarrow{R}_{g,\lambda}=-q\overrightarrow{R}_{g,\lambda}-P_{\mathcal{A},\mc{B},\ld,\og}\overrightarrow{B_g}-q\overrightarrow{B_g}=-q\overrightarrow{R}_{g,\lambda}-\Big(2\ld\big\{1,-\og\big\}\cdot\big\{\nabla_{t,x}B_g^j+\mc{C}^jB_g^j\big\}\Big)_{j=1}^m-\mc{L}_{\mc{A},\mc{B},q}\overrightarrow{B_g}.\]
Next, we set the term involving $\lambda$ to be 0, results in the following transport equation
\begin{align}
\qquad\qquad\Big(1,-\omega\Big)\cdot\Big(\nabla_{t,x}B_g^j(t,x)+\mc{C}^j(t,x)B_g^j(t,x)\Big)=0,\label{equation for GO solution}
\end{align}
for each $j=1,2,\dots,m$.
A particular solution to the above transport equation is given by 
\begin{align*}
B_g^j(t,x)=K^j\phi_j(x+t\omega)e^{-\int_0^t (1,-\omega)\cdot\mathcal{C}^{j}(s,x+(t-s)\omega)\,ds},
\end{align*}
where $K^j$ is $j^{th}$ component of an arbitrary constant vector $\overrightarrow{K}\in\mathbb{R}^m$ and $\phi_j\in C_c^\infty(\mathbb{R}^n)$ for each $1\leq j\leq m$. 
Thus, it remains to solve 
\begin{align}\label{exist3}
P_{\mathcal{A},\mc{B},\ld,\og}\overrightarrow{R}_{g,\lambda}=-q\overrightarrow{R}_{g,\lambda}-\mc{L}_{\mc{A},\mc{B},q}\overrightarrow{B_g},
\end{align}
where $\mc{L}_{\mc{A},\mc{B},q}\overrightarrow{B_g}\in \HS^1_\ld(Q)$ with $\|\mc{L}_{\mc{A},\mc{B},q}\overrightarrow{B_g}\|_{\HS^1_\ld(Q)}\le C\ld\|\overrightarrow{\phi}\|_{\HS^3(\R^n)}$.
Define $T(\overrightarrow{f})=\us$, where $P_{\mc{A},\mc{B},\ld,\og}\us=\overrightarrow{f}$. Then $\|T\|_{\HS^1_\ld(Q)\to \HS^1_\ld(Q)}\le \frac{C}{\ld}$ and $\|T\|_{\HS^1_\ld(Q)\to \HS^2_\ld(Q)}\le C$.
Thus the problem \eqref{exist3} reduces to finding $\overrightarrow{\tilde{f}}\in \HS^1_\ld(Q)$ such that 
\begin{align*}
    (I+qT)\overrightarrow{\tilde{f}}=-\mc{L}_{\mc{A},\mc{B},q}\overrightarrow{B_g}.
\end{align*}
The invertibility of $(I+qT)$ in $\HS^1_\ld(Q)$ is guaranteed for $\ld$  large enough. So, we can find $\overrightarrow{\tilde{f}}\in \HS^1_\ld(Q)$ and hence $\overrightarrow{R}_{g,\lambda}\in \HS^2_\ld(Q)$ which satisfies 
\begin{align}\label{remainder}
\|\overrightarrow{R}_{g,\lambda}\|_{\HS^2_\ld(Q)}\le C\|\overrightarrow{\tilde{f}}\|_{\HS^1_\ld(Q)}\le C\ld\|\overrightarrow{\phi}\|_{\HS^3(Q)}.
\end{align}
This completes the proof.
\end{proof}
Further, by performing a similar analysis to that of Proposition \ref{proposition for GO sol 1}, one can construct GO solution to  $\mathcal{L}^*_{\mathcal{A},\mathcal{B},q}$, the adjoint operator of $\mathcal{L}_{\mathcal{A},\mathcal{B},q}$, where 
\begin{align}\label{definition of adjoint}
\mathcal{L}^*_{\mathcal{A},\mathcal{B},q}\overrightarrow {u}(t,x) :
={\displaystyle  \begin{bmatrix} (\partial_t-A^1_0(t,x))^2u_1(t,x)-\sum\limits_{j=1}^n(\partial_j-A^1_j(t,x))^2u_1(t,x)+\sum\limits_{j=1}^{m}q_{j1}(t,x)u_j(t,x) \\  (\partial_t-A^2_0(t,x))^2u_2(t,x)-\sum\limits_{j=1}^n(\partial_j-A^2_j(t,x))^2u_2(t,x)+\sum\limits_{j=1}^{m} q_{j2}(t,x)u_j(t,x) \\ \vdots \\ (\partial_t-A^m_0(t,x))^2u_m(t,x)-\sum\limits_{j=1}^n(\partial_j-A^m_j(t,x))^2u_m(t,x)+\sum\limits_{j=1}^{m} q_{jm}(t,x)u_j(t,x) \end{bmatrix}}.
\end{align}
\begin{proposition}\label{proposition for GO sol 2}
Let $(\mc{A},\mc{B},q)\in \mc{T}(M_0,\alpha)$, $\omega\in\mathbb{S}^{n-1}$, and $\phi_j\in C_c^\infty(\mathbb{R}^n)$ for each $1\leq j\leq m$. Define
\[
\mathcal{C}^{j}(t,x):=(A^j_0,\overrightarrow{A}^j)=(A^j_0,A^j_{1},A^j_{2},\cdots,A^j_{n}), \quad j=1,2,\dots,m.
\]
Then, there exist constants $C, \lambda_0>0$, depending only on $m,M_0,\alpha$ and $Q$  such that the operator
$
\mathcal{L}^*_{\mathcal{A},\mathcal{B},q}\overrightarrow{v_d}=\overrightarrow{0}
$
admits an exponentially decaying GO solution of the form
  \begin{equation}\label{GO solution for adjoint operator}       \overrightarrow{v_d}(t,x)=e^{-\lambda(t+x\cdot\omega)}\Big(\overrightarrow{B_d}(t,x)+\overrightarrow{R}_{d,\lambda}(t,x)\Big),
    \end{equation}
where the $j$-th component of $\overrightarrow{B_d}$ is given explicitly by
\begin{align*}
    B^j_d(t,x)=K^j\phi_j(x+t\omega)e^{\int_0^t (1,-\omega)\cdot\mathcal{C}^{j}(s,x+(t-s)\omega)\,ds},
\end{align*}
and $K^j$ is a constant.
Also, the remainder term $\overrightarrow{R}_{d,\lambda}$ belongs to $\HS^2(Q)$ satisfying
\begin{equation}
   \big\lVert \overrightarrow{R}_{d,\lambda}\big\rVert_{\HS^k(Q)}\leq C\,\lambda^{-1+k}\big\lVert\overrightarrow{\phi}\big\rVert_{\HS^3(\mathbb{R}^n)},\quad k=0,1,2,
\end{equation}
holds with $\overrightarrow{\phi}=(\phi_1,\,\phi_2,\,\dots,\,\phi_m)^T$.
\end{proposition}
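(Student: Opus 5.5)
The plan mirrors the proof of Proposition \ref{proposition for GO sol 1}, with the sign of $\lambda$ reversed and $\mathcal{L}_{\mathcal{A},\mathcal{B},q}$ replaced by its adjoint \eqref{definition of adjoint}.

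\textbf{Structure of the adjoint.} Expanding $(\partial_t-A^j_0)^2-\sum_k(\partial_k-A^j_k)^2$ shows that $\mathcal{L}^*_{\mathcal{A},\mathcal{B},q}$ has the same structure as $\mathcal{L}_{\mathcal{A},\mathcal{B},q}$:
\begin{itemize}
\item the first-order coefficients $(\mathcal{A},\mathcal{B})$ become $(-\mathcal{A},-\mathcal{B})$;
\item $\widetilde{\mathcal{A}}$ is replaced by the analogous expression built from the negated coefficients;
\item $q$ is replaced by its transpose $q^T$.
\end{itemize}
The admissibility bounds defining $\mc{T}(M_0,\alpha)$ are invariant under negation and transposition. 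Hence Lemma \ref{Lemma for GO sol} applies to the conjugated operator $\widetilde P:=e^{\lambda(t+x\cdot\omega)}(\mathcal{L}^*_{\mathcal{A},\mathcal{B},q}-q^T)e^{-\lambda(t+x\cdot\omega)}$. This is $P_{-\mathcal{A},-\mathcal{B},-\lambda,\omega}$ in the notation of that lemma, and its estimate is symmetric in the sign of $\lambda$, since the lemma rests on the Carleman estimate with linear weight, which holds for $\pm\lambda$.

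\textbf{Solvability of the conjugated equation.} Run the Hahn--Banach and Riesz argument as before, now on the subspace $\{P_{\mathcal{A},\mathcal{B},\lambda,\omega}\vs:\vs\in\CS_c^\infty(Q)\}$, whose formal adjoint is $\widetilde P$. This gives, for every $\overrightarrow{f}\in\HS^1_\lambda(Q)$, some $\us\in\HS^2_\lambda(\R^{1+n})$ with $\widetilde P\us=\overrightarrow{f}$ and $\|\us\|_{\HS^2_\lambda}\le C\|\overrightarrow{f}\|_{\HS^1_\lambda}$. Consequently the solution operator $T$ satisfies $\|T\|_{\HS^1_\lambda\to\HS^1_\lambda}\le C/\lambda$.

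\textbf{Transport equation for the principal part.} Substitute the ansatz \eqref{GO solution for adjoint operator}. The $\lambda^2$ terms cancel because $(1,\omega)$ is null. The $O(\lambda)$ term in the $j$-th component is $-2\lambda(1,-\omega)\cdot(\nabla_{t,x}B^j_d-\mathcal{C}^jB^j_d)$; the sign of $\mathcal{C}^j$ flips both because $\lambda\to-\lambda$ and because $\mathcal{C}^j\to-\mathcal{C}^j$, so the net effect is as written. Setting this term to zero gives
\[
(\partial_t-\omega\cdot\nabla_x)B^j_d=(1,-\omega)\cdot\mathcal{C}^jB^j_d .
\]
The characteristics are $s\mapsto(s,x+(t-s)\omega)$. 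Along them $\phi_j(x+t\omega)$ is constant, and differentiating the exponent $\int_0^t(1,-\omega)\cdot\mathcal{C}^j(s,x+(t-s)\omega)\,ds$ along $\partial_t-\omega\cdot\nabla_x$ returns exactly $(1,-\omega)\cdot\mathcal{C}^j(t,x)$. Hence the stated $B^j_d$ solves the transport equation. Since $\mathcal{C}^j$ is smooth and compactly supported with bounds controlled by $M_0$, we get $\|\mathcal{L}^*_{\mathcal{A},\mathcal{B},q}\overrightarrow{B_d}\|_{\HS^1_\lambda(Q)}\le C\lambda\|\overrightarrow{\phi}\|_{\HS^3}$.

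\textbf{Remainder.} It remains to solve
\[
\widetilde P\overrightarrow{R}_{d,\lambda}=-q^T\overrightarrow{R}_{d,\lambda}-\mathcal{L}^*_{\mathcal{A},\mathcal{B},q}\overrightarrow{B_d}.
\]
Write $\overrightarrow{R}_{d,\lambda}=T\tilde f$, which reduces the problem to $(I+q^TT)\tilde f=-\mathcal{L}^*\overrightarrow{B_d}$. Since $q\in L^\infty$ and $\|T\|\le C/\lambda$ on $\HS^1_\lambda$, the operator $I+q^TT$ is invertible by a Neumann series for $\lambda\ge\lambda_0$. This yields $\|\overrightarrow{R}_{d,\lambda}\|_{\HS^2_\lambda}\le C\lambda\|\overrightarrow{\phi}\|_{\HS^3}$. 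Unwinding the weights in the $\HS^k_\lambda$ norms gives $\|\overrightarrow{R}_{d,\lambda}\|_{\HS^k(Q)}\le C\lambda^{k-1}\|\overrightarrow{\phi}\|_{\HS^3}$ for $k=0,1,2$.

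\textbf{Main obstacle.} The delicate point is the mapping property of $q^T$ on $\HS^1_\lambda(Q)$, since $q$ is only in $L^\infty$ a priori. It is handled by the Sobolev regularity built into $\mc{T}(M_0,\alpha)$: $\HS^{\frac{n+1}{2}+\alpha}$ embeds into $W^{1,\infty}$, so $q^T$ is a bounded multiplier on $\HS^1_\lambda$ uniformly in $\lambda$. The second step to verify is the sign bookkeeping that ensures the Carleman-based Lemma \ref{Lemma for GO sol} is available for the adjoint conjugation.
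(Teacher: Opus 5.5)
Your proposal is correct and matches the paper's argument: the paper proves this proposition only by saying one repeats the Hahn--Banach/Riesz construction of Proposition~\ref{proposition for GO sol 1} for $\mathcal{L}^*_{\mathcal{A},\mathcal{B},q}$. You carry that out with the correct transport equation $(\partial_t-\omega\cdot\nabla_x)B^j_d=(1,-\omega)\cdot\mathcal{C}^jB^j_d$ and the same Neumann-series treatment of the remainder. One small simplification: your solvability step uses Lemma~\ref{Lemma for GO sol} only for $P_{\mathcal{A},\mathcal{B},\lambda,\omega}$ (the formal adjoint of your $\widetilde P$), exactly as stated, so your remark about the estimate holding for both signs of $\lambda$ is not needed here; it is Proposition~\ref{proposition for GO sol 1} that needs the sign-flipped version.
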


\section{Proof of Theorem \ref{main theorem}}\label{Section: Proof of main result}
This section is devoted to the derivation of the main result of the article. We begin by establishing the integral identity \eqref{integral identity for stability}, which, in combination with the GO solutions and the Carleman estimate, is subsequently employed to obtain the desired stability estimate. For $i=1,2$, let
\begin{equation*}
\begin{split}
&\mathcal{A}^{(i)}=\diag \Big(A^{(i),1}_0,\,A^{(i),2}_0,\,\cdots,\,A^{(i),m}_0\Big),\\&\mathcal{B}^{(i)}=\diag \Big(\overrightarrow{A}^{(i),1},\,\overrightarrow{A}^{(i),2},\,\cdots,\,\overrightarrow{A}^{(i),m}\Big),\\&
\mathcal{C}^{(i)}=\diag \Big(\left(A^{(i),1}_0,\overrightarrow{A}^{(i),1}\right),\,\left(A^{(i),2}_0,\overrightarrow{A}^{(i),2}\right),\,\cdots,\,\left(A^{(i),m}_0,\overrightarrow{A}^{(i),m}\right)\Big),
\\
&\widetilde{\mathcal{A}^{(i)}}=\diag \Big(\widetilde{A^{(i),1}},\,\widetilde{A^{(i),2}},\,\cdots,\,\widetilde{A^{(i),m}}\Big),
\end{split}
\end{equation*}
where \[\widetilde{A^{(i),r}}=\partial_tA^{(i),r}_0+(A^{(i),r}_0)^2-\nabla_x\cdot\overrightarrow{A}^{(i),r}-|\overrightarrow{A}^{(i),r}|^2, \qquad r=1,2,\cdots,m.
\] 
Let $\overrightarrow{u}^{(i)}$ be the solution to the following IBVP
\begin{equation}\label{IBVP for u_i}
\left\{\begin{array}{l c}  \mathcal{L}_{\mathcal{A}^{(i)},\mathcal{B}^{(i)},q^{(i)}}\overrightarrow{u}^{(i)}(t,x) = \overrightarrow {0}, &\quad (t,x)\in Q,\\
\overrightarrow{u}^{(i)}(0,x)=\overrightarrow {\psi}_0(x),~~ \partial_t \overrightarrow{u}^{(i)}(0,x) = \overrightarrow {\psi}_1(x), &\quad x\in\Omega,\\
\overrightarrow{u}^{(i)}(t,x) = \overrightarrow {f}(t,x), &\quad (t,x)\in  \Sigma.
\end{array}\right.
\end{equation}
 We set
\begin{align}\label{notation used in integral identity}
\begin{aligned}
\overrightarrow{u}(t,x)&:=(\overrightarrow{u}^{(1)}-\overrightarrow{u}^{(2)})(t,x),\quad q(t,x):=\big(q^{(2)}-q^{(1)}\big)(t,x),\quad \mathcal{A}(t,x):=\Big(\mathcal{A}^{(2)}-\mathcal{A}^{(1)}\Big)(t,x),\\
\widetilde{\mathcal{A}}(t,x)&:=\Big(\widetilde{\mathcal{A}^{(2)}}(t,x)-\widetilde{\mathcal{A}^{(1)}}(t,x)\Big),\quad \mathcal{B}(t,x):=\Big(\mathcal{B}^{(2)}(t,x)-\mathcal{B}^{(1)}(t,x)\Big).
\end{aligned}
\end{align}
Also, for any $g\in L^1(\mathbb{R}^{1+n})$, we define the space-time Fourier transform of $g$ by
    \begin{equation}\label{defn:Fourier transform}
        \widehat{g}(\tau,\xi):=\int\limits_{\mathbb{R}^{n}}\int\limits_{\mathbb{R}
        }g(s,y)e^{-i(s\tau+y\cdot\xi)}\D s\D y,\quad (\tau,\xi)\in\mathbb{R}^{1+n} .
    \end{equation}
We can now formulate the integral identity, as presented in the following proposition.

 \begin{proposition}\label{propintid}
For each $i = 1,2$, let $\big(\mathcal{A}^{(i)},\mathcal{B}^{(i)},q^{(i)}\big)\in \mathcal{T}(M_0,\alpha)$ denote two admissible triples of matrix-valued potentials.  
Consider $\overrightarrow{u}^{(i)}$ to be the solution to \eqref{IBVP for u_i} associated with the potential $\big(\mathcal{A}^{(i)},\mathcal{B}^{(i)},q^{(i)}\big)$, and let $\overrightarrow{v}$ be a solution to the adjoint problem
\begin{align}\label{Def: adjoint probelm}
\mathcal{L}^{*}_{\mathcal{A}^{(1)},\mathcal{B}^{(1)},q^{(1)}} \overrightarrow{v}(t,x) = \overrightarrow{0}
\quad \text{in } Q.
\end{align}
Then the following identity holds:
\begin{align}\label{integral identity for stability}
\begin{aligned}
\int\limits_Q \Big(\mathcal{A}(t,x)\partial_t\overrightarrow{u}^{(2)}(t,x)-\mathcal{B}(t,x)\cdot\nabla_x\overrightarrow{u}^{(2)}(t,x)+\widetilde{\mathcal{A}}(t,x)\overrightarrow{u}^{(2)}(t,x)+q(t,x)\overrightarrow{u}^{(2)}(t,x)\Big)\cdot\overline{\overrightarrow{v}(t,x)}\, dxdt\\
=\int\limits_\Omega \partial_t\overrightarrow{u}(T,x)\cdot\overline{\overrightarrow{v}(T,x)}\, dx-\int\limits_\Omega\overrightarrow{u}(T,x)\cdot\overline{ \partial_t\overrightarrow{v}(T,x)}\, dx
-\int\limits_{\Sigma}\partial_\nu\overrightarrow{u}(t,x)\cdot\overline{\overrightarrow{v}(t,x)}\,dS_xdt.
\end{aligned}
\end{align}
\end{proposition}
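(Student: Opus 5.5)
The identity will follow from the equation satisfied by the difference $\us=\us^{(1)}-\us^{(2)}$ and one integration by parts against $\overline{\vs}$. I will use the notation \eqref{notation used in integral identity} throughout.

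\textbf{Step 1: equation for the difference.} Since $\mathcal{L}_{\mathcal{A}^{(1)},\mathcal{B}^{(1)},q^{(1)}}\us^{(1)}=\overrightarrow{0}$ and $\mathcal{L}_{\mathcal{A}^{(2)},\mathcal{B}^{(2)},q^{(2)}}\us^{(2)}=\overrightarrow{0}$, the compact form \eqref{Govern operator: compact form} gives
\begin{align*}
\mathcal{L}_{\mathcal{A}^{(1)},\mathcal{B}^{(1)},q^{(1)}}\us
&=\big(\mathcal{L}_{\mathcal{A}^{(2)},\mathcal{B}^{(2)},q^{(2)}}-\mathcal{L}_{\mathcal{A}^{(1)},\mathcal{B}^{(1)},q^{(1)}}\big)\us^{(2)}\\
&=\mathcal{A}\,\partial_t\us^{(2)}-\mathcal{B}\cdot\nabla_x\us^{(2)}+(\widetilde{\mathcal{A}}+q)\us^{(2)}.
\end{align*}
Both $\us^{(1)}$ and $\us^{(2)}$ solve \eqref{IBVP for u_i} with the same data $(\overrightarrow{\psi}_0,\overrightarrow{\psi}_1,\overrightarrow{f})$. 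Hence
\[
\us(0,\cdot)=\partial_t\us(0,\cdot)=\overrightarrow{0}\ \text{in }\Omega,\qquad \us|_\Sigma=\overrightarrow{0}.
\]
The left-hand side of \eqref{integral identity for stability} is therefore $\int_Q \mathcal{L}_{\mathcal{A}^{(1)},\mathcal{B}^{(1)},q^{(1)}}\us\cdot\overline{\vs}\,dx\,dt$.

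\textbf{Step 2: integration by parts.} I would first take $\us$ and $\vs$ smooth, say in $\CS^2(\overline{Q})$, and treat each term separately.

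For the wave part, integrating by parts twice in $t$ gives
\[
\int_Q\partial_t^2\us\cdot\overline{\vs}
=\int_\Omega\big[\partial_t\us\cdot\overline{\vs}-\us\cdot\overline{\partial_t\vs}\big](T,x)\,dx+\int_Q\us\cdot\overline{\partial_t^2\vs},
\]
because the traces at $t=0$ vanish. Green's formula gives
\[
-\int_Q\Delta_x\us\cdot\overline{\vs}
=-\int_\Sigma\partial_\nu\us\cdot\overline{\vs}\,dS_x\,dt+\int_\Sigma\us\cdot\overline{\partial_\nu\vs}\,dS_x\,dt-\int_Q\us\cdot\overline{\Delta_x\vs},
\]
and the second boundary integral vanishes since $\us|_\Sigma=\overrightarrow{0}$.

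For the first-order terms $\mathcal{A}^{(1)}\partial_t\us$ and $-\mathcal{B}^{(1)}\cdot\nabla_x\us$, I integrate by parts once. They produce no boundary contributions at all, because $A^{(1),j}_0$ and $\overrightarrow{A}^{(1),j}$ belong to $C_c^\infty(Q)$ and so vanish near $t=T$ and near $\Sigma$. The zeroth-order term $\big(\widetilde{\mathcal{A}^{(1)}}+q^{(1)}\big)\us\cdot\overline{\vs}$ is simply rewritten as $\us\cdot\overline{\big(\widetilde{\mathcal{A}^{(1)}}+(q^{(1)})^T\big)\vs}$, using that the coefficients are real.

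Collecting the interior terms yields $\int_Q\us\cdot\overline{\mathcal{L}^*_{\mathcal{A}^{(1)},\mathcal{B}^{(1)},q^{(1)}}\vs}$, with $\mathcal{L}^*$ given by \eqref{definition of adjoint}. To confirm this, I would expand $(\partial_t-A_0)^2$ and $(\partial_j-A_j)^2$ componentwise and check that the formal adjoint of $\partial_t+A_0$ is $-(\partial_t-A_0)$, and similarly for $\partial_j+A_j$. This interior integral vanishes by \eqref{Def: adjoint probelm}, and what remains is exactly the right-hand side of \eqref{integral identity for stability}.

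\textbf{Step 3: density (main obstacle).} In general we only know, from Theorem \ref{Theorem:well_posedness}, that $\us\in C^1([0,T];\LS^2(\Omega))\cap C([0,T];\HS^1(\Omega))$ with $\partial_\nu\us\in\LS^2(\Sigma)$. For $\vs$ I will assume $\HS^2(Q)$ regularity, which is what the GO solutions of Proposition \ref{proposition for GO sol 2} provide. The plan for removing the smoothness assumption is:
\begin{itemize}
\item approximate $(\overrightarrow{\psi}_0,\overrightarrow{\psi}_1,\overrightarrow{f})$ by smooth compatible data, so that the corresponding solutions are smooth;
\item approximate $\vs$ in $\HS^2(Q)$ by smooth fields;
\item pass to the limit in every term using the continuity estimate \eqref{regularity}, with traces of $\vs$ on $\Sigma$ and on $\{t=T\}$ controlled by the trace theorem in $\HS^2$.
\end{itemize}
The only delicate point is the boundary pairing $\int_\Sigma\partial_\nu\us\cdot\overline{\vs}$. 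It passes to the limit because $\partial_\nu\us$ depends continuously in $\LS^2(\Sigma)$ on the data by \eqref{regularity}, and $\vs|_\Sigma\in\LS^2(\Sigma)$.
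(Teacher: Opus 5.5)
Your argument follows the paper's proof. The paper writes down the difference problem \eqref{IBVP for u (defined as differences of solution)}, pairs it with $\overline{\vs}$, integrates by parts using the vanishing Cauchy data and Dirichlet trace of $\us$, and drops the interior term because $\mathcal{L}^*_{\mathcal{A}^{(1)},\mathcal{B}^{(1)},q^{(1)}}\vs=\overrightarrow{0}$. Two of your additions are details the paper leaves implicit, and both are sound: the remark that the compact support of $A^{(1),j}_0,\overrightarrow{A}^{(1),j}$ kills the extra boundary terms at $t=T$, and the density argument in Step 3.

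There is one place, inherited from the paper, where your Steps 1 and 2 do not fit together. The compact form \eqref{Govern operator: compact form} is not equal to \eqref{Def: Govern operator}. Since $(\partial_t+A_0)^2u=\partial_t^2u+2A_0\partial_tu+(\partial_tA_0)u+A_0^2u$, and similarly in $x$, the first-order part of \eqref{Def: Govern operator} is $2\mathcal{A}\partial_t-2\mathcal{B}\cdot\nabla_x$.

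\begin{itemize}
\item The operator \eqref{definition of adjoint} is the formal adjoint of \eqref{Def: Govern operator}.
\item The formal adjoint of the compact form with coefficient $1$ is instead $\square-\mathcal{A}\partial_t+\mathcal{B}\cdot\nabla_x+\diag\big((A^j_0)^2-|\overrightarrow{A}^j|^2\big)+q^T$, which is a different operator.
\end{itemize}

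So if you take the first-order terms from \eqref{Govern operator: compact form} in Step 1 and integrate them by parts in Step 2, the interior terms will not assemble into \eqref{definition of adjoint}. The expansion check you propose would reveal exactly this.

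You need to work with one definition throughout. With \eqref{Def: Govern operator}, Step 2 goes through as you describe. You move each factor $\partial_t+A^{(1),j}_0$ across as $-(\partial_t-A^{(1),j}_0)$, and likewise in $x$. The terms at $t=T$ reduce to $\int_\Omega\big(\partial_t\us\cdot\overline{\vs}-\us\cdot\overline{\partial_t\vs}\big)(T,x)\,dx$ because $A^{(1),j}_0(T,\cdot)=0$.

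However, Step 1 then produces $2\mathcal{A}\partial_t\us^{(2)}-2\mathcal{B}\cdot\nabla_x\us^{(2)}+(\widetilde{\mathcal{A}}+q)\us^{(2)}$. The identity therefore holds with a factor $2$ on the first-order terms of the left-hand side. This is consistent with the transport equation used for the GO amplitudes in Proposition \ref{proposition for GO sol 1}, which also comes from the squared form. It only changes constants later on, but the identity as printed needs the factor.
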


\begin{proof}
 Using the IBVP \eqref{IBVP for u_i} and notations introduced in \eqref{notation used in integral identity}, we observe that $\overrightarrow{u}$ satisfies the IBVP
\begin{align}\label{IBVP for u (defined as differences of solution)}\left\{
\begin{array}{lc}  \mathcal{L}_{\mathcal{A}^{(1)},\mathcal{B}^{(1)},q^{(1)}}\overrightarrow {u}(t,x) =(\mathcal{A}(t,x)\partial_t-\mathcal{B}(t,x)\cdot\nabla_x+\widetilde{\mathcal{A}}(t,x)+q(t,x))\overrightarrow{u}^{(2)}(t,x),&~(t,x)\in Q,\\
\overrightarrow {u}(0,x)=\overrightarrow {0},\qquad \partial_t \overrightarrow {u}(0,x) = \overrightarrow{0}, &~x\in\Omega,\\
\overrightarrow {u}(t,x) = \overrightarrow{0}, &~(t,x)\in  \Sigma.
\end{array}\right.
\end{align}
From the well-posedness result, we have that $\overrightarrow{u}\in \HS^1(Q)$ and $\partial_\nu\overrightarrow{u}\in \LS^2(\Sigma)$. Multiplying the above expression by $\overrightarrow{v}$, where $\overrightarrow{v}$ denotes a solution to the adjoint problem \eqref{Def: adjoint probelm}, and subsequently integrating over $Q$, we arrive at
\begin{align}\label{integral identity 1}
\int\limits_Q &\mathcal{L}_{\mathcal{A}^{(1)},\mathcal{B}^{(1)},q^{(1)}}\overrightarrow {u}(t,x)\cdot\overline{\overrightarrow{v}(t,x)}\,dxdt- \int\limits_Q \overrightarrow {u}(t,x)\cdot\overline{\mathcal{L}^*_{\mathcal{A}^{(1)},\mathcal{B}^{(1)},q^{(1)}}\overrightarrow{v}(t,x)}\,dxdt\nonumber\\
&=\int\limits_\Omega \partial_t\overrightarrow{u}(T,x)\cdot\overline{\overrightarrow{v}(T,x)}\, dx-\int\limits_\Omega\overrightarrow{u}(T,x)\cdot\overline{ \partial_t\overrightarrow{v}(T,x)}\, dx-\int\limits_{\Sigma}\partial_\nu\overrightarrow{u}(t,x)\cdot\overline{\overrightarrow{v}(t,x)}\,dS_xdt.
\end{align}
In the previous expression, we applied the integration by parts formula together with the input–output data specified in \eqref{IBVP for u (defined as differences of solution)}. Subsequently, invoking the relation $\mathcal{L}^*_{\mathcal{A}^{(1)},\mathcal{B}^{(1)},q^{(1)}}\overrightarrow{v}(t,x) = \overrightarrow{0}$, we derive the integral identity \eqref{integral identity for stability}.
\end{proof}
\subsection{Stability estimate for coefficients with first order terms}\label{Stability estimate for matrix potentials}
The stability estimates  \eqref{main result 1} and  \eqref{main result 2} are the primary results, substantiated in this section. The above integral identity motivates us to look for a suitable bound for \begin{align}
\int\limits_Q \big(\mathcal{A}(t,x)\partial_t\overrightarrow{u}^{(2)}(t,x)&-\mathcal{B}(t,x)\cdot\nabla_x\overrightarrow{u}^{(2)}(t,x)+(\widetilde{\mathcal{A}}(t,x)+q(t,x))\overrightarrow{u}^{(2)}(t,x)\big)\cdot\overline{\overrightarrow{v}(t,x)}\,dxdt
\end{align}
in terms of the input-output map.
In view of the notations \eqref{notation used in integral identity}, we first define 
\[
\mathcal{C}:=(\mc{A},\mc{B}),
\]
where its $j^{th}$ component is given by 
\[
\mathcal{C}^{j}:=(A^{(2),j}_0-A^{(1),j}_0,\overrightarrow{A}^{(2),j}-\overrightarrow{A}^{(1),j}), \m{ for }j=1,2,\cdots,m.
\] 
Also, using the Proposition \ref{proposition for GO sol 1} and \ref{proposition for GO sol 2}, we choose 
\begin{align}\label{GO solution for u_2 and v}
\begin{aligned}
\overrightarrow{u_2}(t,x)&=e^{\lambda (t+x\cdot\omega)}\left(\overrightarrow{B}^{(2)}(t,x)+\overrightarrow{R}^{(2)}_{\lambda}(t,x)\right),\\\overrightarrow{v}(t,x)&=e^{-\lambda (t+x\cdot\omega)}\left(\overrightarrow{B}^{(1)}(t,x)+\overrightarrow{R}^{(1)}_{\lambda}(t,x)\right),
\end{aligned}
\end{align}
 where the $j^{th}$ component of $\overrightarrow{B}^{(1)}(t,x)$ and $\overrightarrow{B}^{(2)}(t,x)$ are given by
\begin{align}
&B^{(1),j}(t,x)=K_*^{j}\phi_j(x+t\omega)e^{\int_0^t (1,-\omega)\cdot\mathcal{C}^{(1),j}(s,x+(t-s)\omega)\,ds},\text{ and }\label{jth component of GO solution for v}\\
&B^{(2),j}(t,x)=K^j\phi_j(x+t\omega)e^{-\int_0^t (1,-\omega)\cdot\mathcal{C}^{(2),j}(s,x+(t-s)\omega)\,ds},\label{jth component of GO solution for u2}
\end{align}
respectively, and $K^j,\,K^j_*$ are arbitrary constants. Also, for $i=1,2$, the remainder term $ \overrightarrow{R}^{(i)}_{\lambda}\in \HS^2(Q)$  with
\begin{align}\label{estimate for remainde term 3}
   \lVert \overrightarrow{R}^{(i)}_{\lambda}\rVert_{\HS^k(Q)}\leq C \lambda^{-1+k}\lVert\overrightarrow{\phi}\rVert_{\HS^3(\mathbb{R}^n)},\quad\,\text{for }\, k=0,1,2,
\end{align}
and for sufficiently large $\lambda$. Using the above decay estimates, one can see that for each $1\leq j\leq m,\,i=1,2$ and $r=0,1$,
\begin{align*}
\lVert (1,\omega)\lambda R^{(i),j}_\lambda+\nabla_{t,x}B^{(i),j}+\nabla_{t,x}R^{(i),j}_\lambda\rVert_{H^r(Q)}\leq C\lambda^r\lVert\overrightarrow{\phi}\rVert_{\HS^3(\mathbb{R}^n)}
\end{align*}
holds for $\lambda$ large enough. From the first two terms on the right-hand side of
\eqref{IBVP for u (defined as differences of solution)} in conjunction with \eqref{GO solution for u_2 and v}, we get
\begin{align}\label{estimates for non homogeneous term for u}
\begin{aligned}
\Big(\mathcal{A}&(t,x)\partial_t\overrightarrow{u}^{(2)}(t,x)-\mathcal{B}(t,x)\cdot\nabla_x\overrightarrow{u}^{(2)}(t,x)\Big)\cdot\overline{\overrightarrow{v}(t,x)}\\
&=\sum_{j=1}^m\Big(A^j_0(t,x),\,-\overrightarrow{A}^j(t,x)\Big)\cdot\Big(\big(\lambda,\lambda\omega\big)\big(B^{(2),j}+R^{(2),j}_\lambda\big)+ \nabla_{t,x}B^{(2),j}+\nabla_{t,x}R^{(2),j}_\lambda\Big)\overline{v_j(t,x)}\\
&=\sum_{j=1}^m\lambda\mathcal{C}^{j}(t,x)\cdot\Big(\big(1,-\omega\big)K^jK_*^{j}|\phi_j(x+t\omega)|^2e^{-\int_0^t (1,-\omega)\cdot\mathcal{C}^{j}(s,x+(t-s)\omega)\,ds}\Big)
 +Z_\lambda(t,x),
\end{aligned}
\end{align}
where the remaining terms obtained from the dot product have been clubbed with  $Z_\lambda(t,x)$, and it satisfies the estimate  $\lVert Z_\lambda\rVert_{L^1(Q)}\leq C \lVert\overrightarrow{\phi}\rVert_{\HS^3(\mathbb{R}^n)}$. Moreover, we have
\begin{align}\label{estimates for tilda A and q with u2 and v}
  &\Big\lVert\big(\widetilde{\mathcal{A}}(t,x)\overrightarrow{u}^{(2)}(t,x)+q(t,x)\overrightarrow{u}^{(2)}(t,x)\big)\cdot\overline{\overrightarrow{v}(t,x)}\Big\rVert_{L^1(Q)}\leq C\lVert\overrightarrow{\phi}\rVert^2_{\HS^3(\mathbb{R}^n)}.
\end{align}
We assume that $\o\in \mathbb{S}^{n-1}$ with $|\o-\o_0|<\frac{\varepsilon}{2}$.
From the definition of the input-output map given in \eqref{Def: input-output map} and the notations used in \eqref{notation used in integral identity}, we have
\begin{align*}
    (\Lambda_1-\Lambda_2)(\overrightarrow{\psi_0},\overrightarrow{\psi_1},\overrightarrow{f})=\Big(\overrightarrow{u}(T,\cdot),\,\partial_\nu\overrightarrow{u}\big|_{\Sigma_{-,\varepsilon/2,\og_0}}\Big).
\end{align*}
The trace theorem in conjunction with \eqref{GO solution for u_2 and v} and \eqref{estimate for remainde term 3} leads to
\begin{align*}
    \lVert (\overrightarrow{\psi_0},\overrightarrow{\psi_1},\overrightarrow{f})\rVert_{(\HS^1(\Omega),\LS^2(\Omega),\HS^1(\Sigma))}\leq C e^{\beta\lambda}\|\Lambda_1-\Lambda_2\|\lVert\overrightarrow{\phi}\rVert_{\HS^3(\mathbb{R}^n)},
\end{align*}
where $C,\beta>0$.
Moreover, using the energy estimates given in \eqref{regularity}, we have
\begin{align}\label{continuity of input-output map}
     \|\dd_\nu \us\|_{\LS^2(\Sigma_{-,\varepsilon/2,\og_0})}+\|\us|_{t=T}\|_{\HS^1(\Omega)}\le Ce^{\beta\ld}\|\Lambda_1-\Lambda_2\|\|\overrightarrow{\phi}\|_{\HS^3(\R^n)}.
\end{align}
Next, we bound the first term on the right-hand side of \eqref{integral identity for stability} in terms of the input-output map. Using \eqref{GO solution for u_2 and v}, \eqref{jth component of GO solution for v} and employing the H\"older's inequality and trace theorem, we obtain
\begin{align}\label{boundaryterm}
\begin{aligned}
&\left|\int_{\Omega}\partial_t\us(T,x)\cdot\overline{\vs(T,x)}\ dx\right|\\
&=\left|\int_{\Omega}e^{-\lambda(T+x\cdot\omega)}\partial_t\us(T,x)\cdot(\overrightarrow{B}^{(1)}+\overrightarrow{R}^{(1)}_{\lambda}(T,x))dx\right|\\
&=\Big|\int_{\Omega}e^{-\lambda(T+x\cdot\omega)}\partial_t\us(T,x)\cdot\Big(\big(K_*^{j}\phi_j(x+T\omega)e^{-\int_0^T (1,-\omega)\cdot\mathcal{C}^{(1),j}(s,x+(T-s)\omega)\,ds}\big)_{j=1}^n+\overrightarrow{R}^{(1)}_{\lambda}(T,x)\Big) dx\Big|\\
&\le\frac{C\|\overrightarrow{\phi}\|_{\HS^3(\mathbb{R}^n)}}{\rt\ld}\sqrt{{\int_{\Omega}\ld e^{-2\lambda(T+x\cdot\omega)} 
|\partial_t\us(T,x)|^2}\ dx}.
\end{aligned}
\end{align}
Using the Carleman estimate \eqref{boundary carleman estimate},  we have
\begin{align}\label{RHS of Carleman estimate}
\begin{aligned}
    &{\int_{\Omega}\ld e^{-2\lambda(T+x\cdot\omega)} 
|\partial_t\us(T,x)|^2}\ dx \leq \int_{Q}e^{-2\lambda(t+x\cdot\omega)}|\mathcal{L}_{\mc{A}^{(1)},\mathcal{B}^{(1)},q^{(1)}}\us(t,x)|^2dxdt\\&\quad+\lambda\int\limits_{\Sigma_{-,\omega}}{ e^{-2\lambda(t+x\cdot\omega)}|\omega\cdot \nu(x)||\partial_{\nu}\us|^2}dS_x dt
+\int_{\Omega} e^{-2\lambda(T+x\cdot\omega)}\left(\lambda^3|\us(T,x)|^2+\ld|\nabla_x{\us(T,x)}|^2\right)dx
\end{aligned}
\end{align}
for $\lambda\geq \lambda_0$.
Combining \eqref{GO solution for u_2 and v}, \eqref{IBVP for u (defined as differences of solution)}, \eqref{estimates for non homogeneous term for u} and \eqref{estimates for tilda A and q with u2 and v}, the first term of the right-hand side of the above expression is bounded by
\begin{align}\label{estimate for operator with u}
\int_{Q}e^{-2\lambda(t+x\cdot\omega)}|\mathcal{L}_{\mc{A}^{(1)},\mathcal{B}^{(1)},q^{(1)}}\us(t,x)|^2dxdt \leq C\lambda^2 \|\overrightarrow{\phi}\|^2_{\HS^3(\R^n)}.
\end{align} 
Employing the estimate \eqref{continuity of input-output map} and the continuity of the input-output map defined in \eqref{Def: input-output map},  we estimate the rest of the terms on the right-hand side of \eqref{RHS of Carleman estimate} as 
\begin{align}\label{carleman estimate at time T}
\int_{\Omega} e^{-2\lambda(T+x\cdot\omega)}\left(\lambda^3|\us(T,x)|^2+\ld|\nabla_x{\us(T,x)}|^2\right)dx \leq C e^{\beta\ld}\|\Lambda_1-\Lambda_2\|^2\|\overrightarrow{\phi}\|^2_{\HS^3(\R^n)},
\end{align}
and
\begin{align}\label{estimates on sigma-}
\begin{aligned}
\int\limits_{\Sigma_{-,\omega}}\lambda e^{-2\lambda(t+x\cdot\omega)}|\omega\cdot \nu(x)||\partial_{\nu}\us|^2dS_xdt&\leq C \int\limits_{\Sigma_{-,\varepsilon/2,\og_0}}\lambda e^{-2\lambda(t+x\cdot\omega)}|\omega\cdot \nu(x)||\partial_{\nu}\us|^2dS_xdt,\\
&\leq C e^{\beta\ld}\|\Lambda_1-\Lambda_2\|^2\|\overrightarrow{\phi}\|^2_{\HS^3(\R^n)},
\end{aligned}
\end{align}
where we have used the fact $\Sigma_{-,\omega}\subset\Sigma_{-,\varepsilon/2,\og_0}$.
By combining \eqref{estimate for operator with u}, \eqref{carleman estimate at time T}, and \eqref{estimates on sigma-}, and substituting the resulting bounds into \eqref{RHS of Carleman estimate}, we obtain
\begin{align*}
    {\int_{\Omega}\ld e^{-2\lambda(T+x\cdot\omega)} 
|\partial_t\us(T,x)|^2}\ dx  \leq C (\lambda^2+e^{\beta\ld}\|\Lambda_1-\Lambda_2\|^2)\|\overrightarrow{\phi}\|^2_{\HS^3(\R^n)}.
\end{align*}
Thus we conclude that 
\begin{align}
&\left|\int_{\Omega}\partial_t\us(T,x)\cdot\overline{\vs(T,x)}\ dx\right|
\le C\|\overrightarrow{\phi}\|^2_{\HS^3(\mathbb{R}^n)}\left({\rt\ld} +e^{\beta\lambda}\|\Lambda_1-\Lambda_2\|\right).
\end{align}
By substituting the partial derivatives of the GO solution given in \eqref{GO solution for u_2 and v} into the integral identity below, and proceeding analogously to the derivation of \eqref{boundaryterm}, we obtain
\begin{align}\label{finaltimeestimate}
\begin{aligned}
\left|\int_{\Omega}\us(T,x)\cdot\overline{\partial_t\vs(T,x)}\ dx\right|&\le C\ld\|\overrightarrow{\phi}\|_{\HS^3(\mathbb{R}^n)}\sqrt{{\int_{\Omega}e^{-2\lambda(T+x\cdot\omega)}|\us(T,x)|^2}\ dx},\\
&\le Ce^{\beta\lambda}\|\Lambda_1-\Lambda_2\|\|\overrightarrow{\phi}\|_{\HS^3(\mathbb{R}^n)}^2,
\end{aligned}
\end{align}
and
\begin{align}\label{carleman on boundary sigma}
\begin{aligned}
\left|\int_\Sigma\partial_\nu \us(t,x)\cdot\overline{\vs(t,x)}\ dS_xdt\right|&=\frac{1}{\sqrt\lambda}\ \left|\int_\Sigma\sqrt{\lambda}\partial_\nu \us(t,x)\cdot\overline{\vs(t,x)}\ dS_xdt\right|,\\
&\le \frac{C\|\overrightarrow{\phi}\|_{\HS^3(\mathbb{R}^n)}}{\sqrt\lambda}\sqrt{\int_{\Sigma}{\lambda e^{-2\lambda(t+x\cdot\omega)}}|\dd_\nu \us(t,x)|^2\ dS_xdt}.
\end{aligned}
\end{align}
We now use Carleman estimates to control boundary integrals on the portion $\Sigma_{+,\varepsilon/2,\omega_0}$ in terms of corresponding information on the complementary portion $\Sigma_{-,\varepsilon/2,\omega_0}$. More specifically, since $\Sigma_{+,\varepsilon/2,\omega_0}\subseteq \Sigma_{+,\omega}$, boundary Carleman estimate \eqref{boundary carleman estimate} yields
\begin{align}\label{boundarygreen3}
\begin{aligned}
\int\limits_{\Sigma_{+,\varepsilon/2,\omega_0}}{\ld e^{-2\lambda(t+x\cdot\omega)}}|\partial_\nu \us(t,x)|^2\ dS_xdt&\le \int\limits_{\Sigma_{+,\varepsilon/2,\omega_0}}\frac{\ld}{\varepsilon/2}{ e^{-2\lambda(t+x\cdot\omega)}}|\nu(x)\cdot\omega||\partial_\nu \us(t,x)|^2 dS_xdt,\\
&\le\frac{1}{\varepsilon/2}\int_{\Sigma_{+,\omega}}{\lambda e^{-2\lambda(t+x\cdot\omega)}}|\nu(x)\cdot\omega||\partial_\nu \us(t,x)|^2 dS_xdt,\\
&\le \frac{2C}{\varepsilon}(\lambda^2+ e^{\beta\lambda}\|\Lambda_1-\Lambda_2\|^2 )\|\overrightarrow{\phi}\|_{\HS^3(\mathbb{R}^n)}^2
\end{aligned}
\end{align}
for any $\lambda\geq \lambda_0$.
On combining estimates \eqref{estimates on sigma-}, \eqref{boundaryterm}, \eqref{finaltimeestimate}, \eqref{carleman on boundary sigma} and \eqref{boundarygreen3} into the integral identity \eqref{integral identity for stability}, we get
\begin{align}\label{final estimates with all matrix potential}
\begin{aligned}
\Big|\int\limits_Q \big(\mathcal{A}(t,x)\partial_t\overrightarrow{u}^{(2)}(t,x)&-\mathcal{B}(t,x)\cdot\nabla_x\overrightarrow{u}^{(2)}(t,x)+(\widetilde{\mathcal{A}}(t,x)+q(t,x))\overrightarrow{u}^{(2)}(t,x)\big)\cdot\overline{\overrightarrow{v}(t,x)}\,dxdt\Big|\\
&\leq C\|\overrightarrow{\phi}\|^2_{\HS^3(\mathbb{R}^n)}\left({\rt\ld} +e^{\beta\lambda}\|\Lambda_1-\Lambda_2\| \right)
\end{aligned}
\end{align}
for any $\lambda\geq \lambda_0$. We now estimate the line integrals of the component of the vector potential along the propagation direction of the light rays, using the input–output operator framework. Based on the preceding analysis, we derive the following lemma.
\begin{lemma}\label{first estimate for matrix potential}
For each $j=1,2,\dots,m$, let $\mathcal{C}^{j}=(A^j_0,\overrightarrow{A}^j)\in \CS^\infty_c(Q)$ and scalar valued function $\phi_j\in C_c^\infty(\mathbb{R}^n)$ be the $j^{th}$ component of $\overrightarrow{\phi}$. Assume that  $\omega\in\mathbb{S}^{n-1}$ with $|\omega-\omega_0|<\frac{\varepsilon}{2}$.  Then we can find positive constants $C,\,\beta,\,\lambda_0>0$ such that, the estimate
\begin{align}\label{first stability estimate}
\begin{aligned}
    \Bigg|\int_0^T\int_{\mathbb{R}^{n}}(1,-\omega)\cdot\mathcal{C}^{j}(t,y-t\omega)&e^{-\int^t_0 (1,-\omega)\cdot\mathcal{C}^{j}(s,y-s\omega)\,ds}|\phi_j(y)|^2\,dydt\Bigg|\\
    &\leq C \Bigg(\frac{1}{\sqrt{\lambda}}+e^{\beta\lambda}\lVert\Lambda_1-\Lambda_2\rVert\Bigg)\lVert\overrightarrow{\phi}\rVert^2_{\HS^3(\mathbb{R}^n)},
    \end{aligned}
\end{align}
holds  for all $\lambda\geq\lambda_0>1$.
\end{lemma}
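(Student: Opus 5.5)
The plan is to start from the estimate \eqref{final estimates with all matrix potential}, already obtained from the integral identity of Proposition \ref{propintid}, and isolate the term of order $\lambda$. I will decouple the components by choosing the constant vectors in the GO solutions \eqref{GO solution for u_2 and v}: for a fixed $j$, take $K^k=K_*^k=\delta_{jk}$, so that only the $j$-th components of $\overrightarrow{B}^{(1)},\overrightarrow{B}^{(2)}$ are nonzero (the remainders still obey \eqref{estimate for remainde term 3}). Then the integrand splits into three pieces, using \eqref{estimates for non homogeneous term for u} and \eqref{estimates for tilda A and q with u2 and v}:
\begin{enumerate}
\item the leading term $\lambda\,\mathcal{C}^j\cdot(1,-\omega)|\phi_j(x+t\omega)|^2 e^{-\int_0^t(1,-\omega)\cdot\mathcal{C}^{j}(s,x+(t-s)\omega)ds}$, where the product of the two exponential amplitudes combines into the exponential of the difference $\mathcal{C}^{(2),j}-\mathcal{C}^{(1),j}=\mathcal{C}^j$;
\item the remainder $Z_\lambda$, with $\|Z_\lambda\|_{L^1(Q)}\le C\|\overrightarrow{\phi}\|^2_{\HS^3}$ (the bound is quadratic because each of the two GO factors contributes one factor of $\|\overrightarrow{\phi}\|_{\HS^3}$);
\item the zeroth-order contribution, bounded in $L^1$ by $C\|\overrightarrow{\phi}\|^2_{\HS^3}$.
\end{enumerate}
For this I need the off-diagonal entries $q_{kj}$ to pair only $u^{(2)}_j$ with $v_k$. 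Since $v_k$ consists of remainder terms only for $k\ne j$, these cross terms are $O(\|\overrightarrow{\phi}\|^2_{\HS^3})$ in $L^1$ anyway.

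Next I move everything except the leading term to the right-hand side, which gives
\begin{align*}
\lambda\Big|\int_Q (1,-\omega)\cdot\mathcal{C}^j(t,x)\,|\phi_j(x+t\omega)|^2 e^{-\int_0^t (1,-\omega)\cdot\mathcal{C}^{j}(s,x+(t-s)\omega)ds}dxdt\Big|\le C\|\overrightarrow{\phi}\|^2_{\HS^3}\big(1+\sqrt{\lambda}+e^{\beta\lambda}\|\Lambda_1-\Lambda_2\|\big).
\end{align*}
Dividing by $\lambda\ge\lambda_0>1$ yields the factor $\lambda^{-1/2}+e^{\beta\lambda}\|\Lambda_1-\Lambda_2\|$; here I use $e^{\beta\lambda}/\lambda\le e^{\beta\lambda}$ and $1/\lambda\le\lambda^{-1/2}$. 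Finally I change variables $y=x+t\omega$, so $x=y-t\omega$. Since $\mathcal{C}^j$ is compactly supported in $Q$, it extends by zero, and the $x$-integral becomes an integral over $\mathbb{R}^n$. Inside the exponent, $x+(t-s)\omega=y-s\omega$, which reproduces exactly the integrand of \eqref{first stability estimate}.

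The step I expect to be the main obstacle is the bookkeeping in \eqref{estimates for non homogeneous term for u}. I must check that every term other than the one carrying $\lambda B^{(2),j}\overline{B^{(1),j}}$ really is $O(1)$ in $L^1$ uniformly in $\lambda$. The terms $\lambda R^{(2),j}\overline{B^{(1),j}}$ and $\lambda B^{(2),j}\overline{R^{(1),j}}$ are only $O(\lambda\cdot\lambda^{-1})$, which is fine by \eqref{estimate for remainde term 3} with $k=0$. The terms involving $\nabla R$ are $O(1)$ by the $k=1$ bound. The exponential weights $e^{\pm\lambda(t+x\cdot\omega)}$ cancel exactly in the product $u^{(2)}\overline{v}$, so no growth appears. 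One more point to verify is the sign $(1,-\omega)$ coming from $(A_0,-\overrightarrow{A})\cdot(\lambda,\lambda\omega)$: this gives $A_0-\omega\cdot\overrightarrow{A}$, which is consistent with the lemma.
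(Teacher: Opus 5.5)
Your proposal is correct and follows essentially the same route as the paper: choose $\overrightarrow{K}=\overrightarrow{K_*}=(\delta_{jr})_{r=1}^m$, isolate the order-$\lambda$ term in \eqref{final estimates with all matrix potential} using \eqref{estimates for non homogeneous term for u} and \eqref{estimates for tilda A and q with u2 and v}, divide by $\lambda$, and change variables $y=x+t\omega$ using the compact support of $\mathcal{C}^j$. Your bookkeeping is somewhat more explicit than the paper's; for instance, you state the bound on $Z_\lambda$ correctly as $\|Z_\lambda\|_{L^1(Q)}\le C\|\overrightarrow{\phi}\|^2_{\HS^3(\mathbb{R}^n)}$, whereas the paper omits the square.
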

\begin{proof}
To derive \eqref{first stability estimate}, we substitute the  GO solutions into the left-hand side of inequality \eqref{final estimates with all matrix potential}. Fix an arbitrary index $j\in\{1,2,\dots,m\}$, and select the constant vectors $\overrightarrow{K}$ and $\overrightarrow{K_*}$, introduced in \eqref{jth component of GO solution for u2} and \eqref{jth component of GO solution for v}, respectively, by
\begin{equation}\label{choice for constant K}
    \overrightarrow {K}=\left( \delta_{jr} \right)^m_{r=1},\,\,\quad \overrightarrow {K_*}=\left( \delta_{jr}\right)^m_{r=1},
\end{equation}
where $\delta_{jr}$ denotes the Kronecker delta for non-negative integers $j$ and $r$. Dividing \eqref{final estimates with all matrix potential} by $\ld$, and applying \eqref{estimates for non homogeneous term for u} with the same choice of $\overrightarrow{K}$ and $\overrightarrow{K_*}$ as in \eqref{choice for constant K} to the GO solutions defined in \eqref{GO solution for u_2 and v}, we obtain
\begin{align}\label{first stability estimate-01}
\begin{aligned}
\Bigg|\int_Q(1,-\omega)\cdot\mathcal{C}^{j}(t,x)&|\phi_j(x+t\omega)|^2e^{-\int_0^t (1,-\omega)\cdot\mathcal{C}^{j}(s,x+(t-s)\omega)\,ds}\,dxdt\Bigg|\\
&\leq C \Bigg(\frac{1}{\sqrt{\lambda}}+e^{\beta\lambda}\lVert\Lambda_1-\Lambda_2\rVert\Bigg)\lVert\phi\rVert^2_{\HS^3(\mathbb{R}^n)},
\end{aligned}
\end{align}
for each $1\leq j\leq m$ and for $\lambda\geq \lambda_0$. A change of variable $y=x+t\og$ into \eqref{first stability estimate-01} in conjunction with 
 the fact that $\mc{A},\mc{B}\in \mathbf{C}_c^\infty(Q)$, boils down the above expression to 
\begin{align*}
\Bigg|\int_0^T\int_{\mathbb{R}^{n}}(1,-\omega)\cdot\mathcal{C}^{j}(t,y-t\omega)&e^{-\int^t_0 (1,-\omega)\cdot\mathcal{C}^{j}(s,y-s\omega)\,ds}|\phi_j(y)|^2\,dydt\Bigg|\nonumber\\
&\leq C \Bigg(\frac{1}{\sqrt{\lambda}}+e^{\beta\lambda}\lVert\Lambda_1-\Lambda_2\rVert\Bigg)\lVert\overrightarrow{\phi}\rVert^2_{\HS^3(\mathbb{R}^n)},
\end{align*}
for each $1\leq j\leq m$ and for all $\lambda\geq\lambda_0>1$.
\end{proof}

Next, we seek to derive the light ray estimate by substituting particular choices of $\phi_j$. To this end, let $x\in\mb{R}^n$ be arbitrarily fixed, and  $B(0,1)$ denotes the open unit ball in $\R^n$. 
Now, we choose 
 $\phi_j\in C_c^{\iy}(B(0,1))$ with $\|\phi_j\|_{L^2(\mathbb{R}^n)}=1$ and for $h>0$, we define
\[ \phi_{j,h}(y):=h^{-\frac{n}{2}}\phi_j\Big(\frac{y-x}{h}\Big),\qquad y\in\mathbb{R}^n, \] 
for each $j=1,2,\dots,m.$ As a result, $\phi_{j,h}\in C_c^{\iy}(\R^n)$ with 
$\supp(\phi_{j,h})\subseteq B(x,h)$ and $\|\phi_{j,h}\|_{L^2(\mathbb{R}^n)}=1$. Moreover, there exists a constant $C>0$ depending on $\phi_j$ such that $\|\phi_{j,h}\|_{H^3(\mathbb{R}^n)}\le Ch^{-3}$. On substituting these mollified function into the inequality in \eqref{first stability estimate}, we get 
\begin{align}\label{estimates with mollification}
\begin{aligned}
    \Bigg|\int_0^T\int_{\mathbb{R}^{n}}(1,-\omega)\cdot\mathcal{C}^{j}(t,y-t\omega)&e^{-\int^t_0 (1,-\omega)\cdot\mathcal{C}^{j}(s,y-s\omega)\,ds}|\phi_{j,h}(y)|^2\,dydt\Bigg|\\
    &\leq C \Bigg(\frac{1}{\sqrt{\lambda}}+e^{\beta\lambda}\lVert\Lambda_1-\Lambda_2\rVert\Bigg)\lVert\overrightarrow{\phi_{h}}\rVert^2_{\HS^3(\mathbb{R}^n)},
    \end{aligned}
\end{align}
where the function $\overrightarrow{\phi_h}(y)=(\phi_{1,h}(y), \phi_{2,h}(y),\cdots,\phi_{m,h}(y))^T$ and $|\omega-\omega_0|<\frac{\varepsilon}{2}$. Further, we observe that the first term in the above expression can be written as 
\begin{align}\label{Bound: 5.25}
\begin{aligned}
    \int_0^T\int_{\mathbb{R}^{n}}&(1,-\omega)\cdot\mathcal{C}^{j}(t,y-t\omega)e^{-\int^t_0 (1,-\omega)\cdot\mathcal{C}^{j}(s,y-s\omega)\,ds}|\phi_{j,h}(y)|^2\,dydt\\&=-\int_{\mathbb{R}^n} \int_{0}^{T}
    \partial_t{\left( e^{-\int_{0}^{t}{(1,-\omega)\cdot\mathcal{C}^{j}(s,y-s\omega)ds}}\right)}|\phi_{j,h}(y)|^2\ dtdy\\&=-\int_{\mathbb{R}^n}   \left( e^{-\int_{0}^{T}{(1,-\omega)\cdot\mathcal{C}^{j}(s,y-s\omega)ds}}-1\right)|\phi_{j,h}(y)|^2~dy.
    \end{aligned}
\end{align}
Next, we combine the previous expression with  \eqref{estimates with mollification} to get
\begin{equation}
\left|\int_{\R^n}|\phi_{j,h}(y)|^2\left( e^{-\int_{0}^{T}{(1,-\omega)\cdot\mathcal{C}^{j}(s,y-s\omega)ds}}-1\right)dy\right|\le Ch^{-6}\left(\frac{1}{\sqrt{\lambda}}+e^{\beta\lambda}\|\Lambda_1-\Lambda_2\|\right).\label{approximation1}
\end{equation}
Further, for arbitrary $K>0$, we define a continuous function $g:[-K,K]\rightarrow \mathbb{R}$ by 
\begin{align*}
g(x)=\begin {cases}
\frac{e^x-1}{x},\quad &\m{for }x\neq 0,\\
1,\quad\quad &\m{for }x=0.
\end{cases}
\end{align*}		
By the extreme-value theorem, a continuous function on a compact interval must attain both its maximum and minimum. However, $g$ is non-negative. Thus, there exists a constant $C>0$ which depends on $M$ such that 
\begin{align}\label{Def: Lower bound for exp} 
|x|\le C|e^x-1|,\ \text{ for all } x\in[-M,M]. \end{align}

Using \eqref{approximation1} and \eqref{Def: Lower bound for exp} in conjunction with mean value theorem, we prove the following light ray transform.
\begin{lemma}\label{existence of delta lambda0}
Suppose that the assumptions given in Lemma \ref{first estimate for matrix potential} hold. Then there exist $\delta,\,\ld_0,$ and $C>0$ such that for each $x\in\mb{R}^n$ and  $\lambda\geq\lambda_0>1$, the following estimate
\begin{align}\label{second stability estimate:01} 
\left|\int_{\mathbb{R}}{(1,-\omega)\cdot\mathcal{C}^{j}(s,x-s\omega)\ ds}\right|\le C\left(\frac{1}{\lambda^\delta}+\rm e^{\beta\lambda}\|\Lambda_1-\Lambda_2\|\right),
\end{align}
 holds.
\end{lemma}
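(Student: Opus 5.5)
The plan is to pass from the averaged identity \eqref{approximation1} to a pointwise bound on the light ray integral. This is done by comparing $|\phi_{j,h}|^2$ with a Dirac mass at $x$ and letting $h$ shrink with $\lambda$. Write $F_j(y):=\int_0^T(1,-\omega)\cdot\mathcal{C}^{j}(s,y-s\omega)\,ds$. Since $\mathcal{C}^j$ is compactly supported in $Q$, this equals $\int_{\mathbb{R}}(1,-\omega)\cdot\mathcal{C}^{j}(s,y-s\omega)\,ds$. From the admissible class $\mathcal{T}(M_0,\alpha)$ and Sobolev embedding ($H^{\frac{n+1}{2}+\alpha}\subset W^{1,\infty}$ since $\alpha>1$), $F_j$ is bounded by some $M$ depending only on $M_0,T,Q$. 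It is also Lipschitz in $y$ with constant $L$ of the same type. Hence $G_j(y):=e^{-F_j(y)}-1$ is bounded and Lipschitz on $\mathbb{R}^n$, with constants depending only on $M_0,\alpha,Q$.

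First, I will use $\|\phi_{j,h}\|_{L^2}=1$ and $\supp\phi_{j,h}\subset B(x,h)$ to write
\[
G_j(x)=\int_{\mathbb{R}^n}|\phi_{j,h}(y)|^2G_j(x)\,dy=\int_{\mathbb{R}^n}|\phi_{j,h}(y)|^2G_j(y)\,dy+\int_{\mathbb{R}^n}|\phi_{j,h}(y)|^2\big(G_j(x)-G_j(y)\big)\,dy .
\]
By the mean value theorem the last integral is at most $\sup|\nabla G_j|\,h\le Ch$. The first integral is controlled by \eqref{approximation1}. This gives
\[
|e^{-F_j(x)}-1|\le C\Big(h+h^{-6}\big(\lambda^{-1/2}+e^{\beta\lambda}\|\Lambda_1-\Lambda_2\|\big)\Big).
\]
Next I apply \eqref{Def: Lower bound for exp} with $-F_j(x)\in[-M,M]$ to get $|F_j(x)|\le C|e^{-F_j(x)}-1|$.

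Finally I balance the terms: choose $h=\lambda^{-1/14}$, so that $h=h^{-6}\lambda^{-1/2}=\lambda^{-1/14}$. Then $h^{-6}e^{\beta\lambda}=\lambda^{3/7}e^{\beta\lambda}\le e^{(\beta+1)\lambda}$, and after renaming $\beta$ we obtain \eqref{second stability estimate:01} with $\delta=1/14$. This holds uniformly in $x\in\mathbb{R}^n$ and for $|\omega-\omega_0|<\varepsilon/2$.

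The main obstacle I expect is the uniform Lipschitz bound on $G_j$, equivalently on $\nabla_y F_j$. It must depend only on $M_0,\alpha,Q$ and not on the individual coefficients. This is where the regularity $\frac{n+1}{2}+\alpha$ in $\mathcal{T}(M_0,\alpha)$ enters, through the embedding into $C^1$. If only $L^\infty$ bounds were used, the approximation step would fail, so I would check this embedding carefully. The remaining steps are bookkeeping of constants.
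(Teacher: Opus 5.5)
Your proposal is correct and follows the paper's proof essentially step for step. Both arguments use the same splitting of $e^{-F_j(x)}-1$ against $|\phi_{j,h}|^2$, the mean-value (Lipschitz) bound giving the $O(h)$ term, then \eqref{approximation1} and \eqref{Def: Lower bound for exp}, and finally the choice $h=\lambda^{-1/14}$, which yields $\delta=1/14$. Two details that the paper leaves implicit are handled explicitly in your version: you justify the uniform Lipschitz constant via $H^{\frac{n+1}{2}+\alpha}\subset W^{1,\infty}$ (using $\alpha>1$), and you absorb the factor $h^{-6}=\lambda^{3/7}$ into $e^{\beta\lambda}$ by enlarging $\beta$.
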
	
\begin{proof}
By invoking the estimate in \eqref{Bound: 5.25}, it follows that
\begin{align}
&\left|e^{-\int_{0}^{T}(1,-\og)\cdot\mathcal{C}^{j}(s,x-s\og)ds}-1\right|=\left|\int_{\R^n}|\phi_{j,h}(y)|^2(e^{-\int_{0}^{T}-(1,-\og)\cdot\mathcal{C}^{j}(s,x-s\og)ds}-1)\ dy\right|\\
&\le\  \left|\int_{\R^n}|\phi_{j,h}(y)|^2(e^{-\int_{0}^{T}(1,-\og)\cdot\mathcal{C}^{j}(s,x-s\og)ds}-e^{-\int_{0}^{T}(1,-\og)\cdot\mathcal{C}^{j}(s,y-s\og)ds})\ dy\right|\\
&\qquad+\left|\int_{\R^n}|\phi_{j,h}(y)|^2(e^{-\int_{0}^{T}(1,-\og)\cdot\mathcal{C}^{j}(s,y-s\og)ds}-1)\ dy\right|.
\end{align}
Now  to estimate the first term on the right-hand side of the previous expression, we implement the mean value theorem twice to get
\begin{align}
 \big|e^{-\int_{0}^{T}{(1,-\omega)\cdot\mathcal{C}^{j}(s,x-s\omega)ds}}&- e^{-\int_{0}^{T}{(1,-\omega)\cdot\mathcal{C}^{j}(s,y-s\omega)ds}}\big|&\\
&\le C\left|\int_{0}^{T}{(1,-\omega)\cdot\mathcal{C}^{j}(s,x-s\omega)ds}-\int_{0}^{T}{(1,-\omega)\cdot\mathcal{C}^{j}(s,y-s\omega)ds}\right|\\&\le C|x-y|,
\end{align}
where $C>0$ depends on $Q$ and apriori bounds of $\mathcal{C}^{j}$. 
Combining the previous calculation with inequality \eqref{approximation1}, results into
\begin{align}
\left|e^{-\int_{0}^{T}(1,-\og)\cdot\mathcal{C}^{j}(s,x-s\og)ds}-1\right|
\le  C\left(h+h^{-6}\Big(\frac{1}{\sqrt{\lambda}}+ e^{\beta\lambda}\|\Lambda_1-\Lambda_2\|\Big)\right), 
\end{align}
where $|\og-\og_0|<\frac{\varepsilon}{2}$. 
Here we have also used that $\supp(\phi_{j,h})\subseteq B(x,h)$. Armed with \eqref{Def: Lower bound for exp}, we then conclude that 
\begin{align}\label{integral in CJ and exponential}
\begin{aligned}
\left|\int_{0}^{T}(1,-\og)\cdot\mathcal{C}^{j}(s,x-s\og)ds\right|&\le C\left| e^{-\int_{0}^{T}(1,-\og)\cdot\mathcal{C}^{j}(s,x-s\og)ds}-1\right|\\&\le  C\left(h+h^{-6}\Big(\frac{1}{\sqrt{\lambda}}+ e^{\beta\lambda}\|\Lambda_1-\Lambda_2\|\Big)\right).
\end{aligned}
\end{align}
Estimate \eqref{second stability estimate:01} can be accomplished by choosing sufficiently small $h > 0$ such that $h=\ld^{-\frac{1}{14}}$ with $\ld\geq\ld_0>1$.
Thus, there exists $\delta>0$ such that
\begin{align*}
\left|\int_{\mathbb{R}}{(1,-\omega)\cdot\mathcal{C}^{j}(s,x-s\omega)ds}\right|=\left|\int_{0}^{T}{(1,-\omega)\cdot\mathcal{C}^{j}(s,x-s\omega)ds}\right|\le C\left(\frac{1}{\lambda^\delta}+ e^{\beta\lambda}\|\Lambda_1-\Lambda_2\|\right),
\end{align*} 
where $|\og-\og_0|<\frac{\varepsilon}{2}$ and  $\lambda\ge\lambda_0>1$.
\end{proof}
As a corollary, we establish inequality \eqref{Fourier estimates along ray}, formulated in terms of the space-time Fourier transform. To this end, we integrate the componentwise estimates provided by \eqref{second stability estimate:01}, which yields the desired Fourier estimate. More precisely, for $\og\in\mb{S}^{n-1}$ such that $\lvert\og-\og_0\rvert<\varepsilon/2$, the space-time Fourier transform  \eqref{defn:Fourier transform} yields
\begin{align*}
\bigl|(1,-\og)\cdot\widehat{\mathcal{C}^{j}}(\og\cdot\xi,\xi)\bigr|
&=\left|\int_{\R^{1+n}}(1,-\og)\cdot\mathcal{C}^{j}(s,y)\,e^{-i(s(\og\cdot\xi)+y\cdot \xi)}\ ds\,dy\right|\\
&=\left|\int_{\R^n}e^{-iz\cdot\xi}\left(\int_{\R}(1,-\og)\cdot {\mathcal{C}^{j}}(s,z-\og s)\,ds\right)\ dz\right|,
\end{align*}
where we used the change of variables $y=z-s\og$. Since $|e^{-iz\cdot\xi}|=1$ and $\mathcal{C}^{j}\in \mathbf{C}_c^{\infty}(Q)$, estimate \eqref{second stability estimate:01} implies
\begin{align}\label{5.31}
\begin{aligned}
\bigl|(1,-\og)\cdot\widehat{\mathcal{C}^{j}}(\og\cdot\xi,\xi)\bigr|
&\le \int_{\R^n}\left|\int_{\R}(1,-\og)\cdot\mathcal{C}^{j}(s,z-s\og)\,ds\right|\ dz \\
&\le C\left(\frac{1}{\lambda^\delta}+e^{\beta\lambda}\|\Lambda_1-\Lambda_2\|\right).
\end{aligned}
\end{align}
Using the ideas of \cite{bellassoued2019stability} and employing Lemma \ref{existence of delta lambda0}, we establish Fourier estimates along light rays.
\begin{corollary}\label{existence of a cone}
Assume that the hypotheses of Lemma \ref{first estimate for matrix potential} are satisfied. Then there exists an open cone $\mc{S}\subset \mb{R}^{1+n}$ such that, for every $(\tau,\xi)\in \mc{S}$, one has
\begin{align}\label{Fourier estimates along ray}
\big|(1,-\og(\tau,\xi))\cdot\widehat{\mathcal{C}^{j}}(\tau,\xi)\big|\le C\left(\frac{1}{\lambda^\delta}+ e^{\beta\lambda}\|\Lambda_1-\Lambda_2\|\right),
\end{align}
for all $\og(\tau,\xi)\in \mb{S}^{n-1}$ satisfying $|\og-\og_0|<\frac{\varepsilon}{2}$ and the constraint $\og(\tau,\xi)\cdot\xi=\tau$.
\end{corollary}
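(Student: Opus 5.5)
The plan is to obtain the corollary directly from the pointwise Fourier bound \eqref{5.31}, which already holds for every $\xi\in\mathbb{R}^n$ and every $\og\in\mathbb{S}^{n-1}$ with $|\og-\og_0|<\varepsilon/2$. What remains is to describe the set of frequencies $(\tau,\xi)$ that can be written as $(\og\cdot\xi,\xi)$ for such an admissible $\og$, and to show that this set contains an open cone $\mc{S}$.

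I will define
\[
\mc{S}:=\Big\{(\tau,\xi)\in\mathbb{R}^{1+n}:\ \xi\neq 0,\ \Big|\tfrac{\tau}{|\xi|}-\og_0\cdot\tfrac{\xi}{|\xi|}\Big|<\eta\Big\},
\]
with $\eta>0$ small and depending only on $\varepsilon$. This set is invariant under positive dilations and is open, so it is an open cone. To check that it is nonempty, note that $(\og_0\cdot\xi,\xi)$ lies in $\mc{S}$ for every $\xi$ not parallel to $\og_0$. To stay away from degenerate cases, I will additionally intersect with the region where $\xi/|\xi|$ is not close to $\pm\og_0$.

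For $(\tau,\xi)\in\mc{S}$, I then construct $\og=\og(\tau,\xi)$ explicitly. Write $\hat\xi=\xi/|\xi|$ and $s=\tau/|\xi|$. Decompose $\og_0=(\og_0\cdot\hat\xi)\hat\xi+\og_0^\perp$, where $\og_0^\perp\neq0$ because $\hat\xi$ is kept away from $\pm\og_0$. Set
\[
\og:=s\,\hat\xi+\sqrt{1-s^2}\,\frac{\og_0^\perp}{|\og_0^\perp|}.
\]
By construction $|\og|=1$ and $\og\cdot\xi=s|\xi|=\tau$.

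The remaining step is continuity. As $s\to\og_0\cdot\hat\xi$, the vector $\og$ tends to $(\og_0\cdot\hat\xi)\hat\xi+|\og_0^\perp|\,\og_0^\perp/|\og_0^\perp|=\og_0$. This convergence is uniform in $\hat\xi$ on the compact region where $|\og_0^\perp|\ge c>0$, since $|\og_0^\perp|=\sqrt{1-(\og_0\cdot\hat\xi)^2}$. Therefore choosing $\eta$ small in terms of $\varepsilon$ and $c$ gives $|\og-\og_0|<\varepsilon/2$.

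Once this is in place, applying \eqref{5.31} with this $\og$ and $\xi$ gives \eqref{Fourier estimates along ray}, because $\widehat{\mathcal{C}^{j}}(\tau,\xi)=\widehat{\mathcal{C}^{j}}(\og\cdot\xi,\xi)$. The constants $C,\delta,\beta$ are uniform, since \eqref{second stability estimate:01} holds uniformly in $\og$ within the admissible neighbourhood and in $x$. Moreover the $z$-integral in \eqref{5.31} runs over a bounded set, because $\mathcal{C}^{j}$ is compactly supported in $Q$.

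I expect the main obstacle to be this geometric step: producing a single open cone on which an admissible $\og$ exists, and controlling $|\og-\og_0|$ uniformly. Excluding $\hat\xi$ near $\pm\og_0$ is what makes the uniform bound possible.
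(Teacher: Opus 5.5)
Your proposal is correct and follows essentially the paper's argument. Both reduce the corollary to \eqref{5.31} by exhibiting, at each point of an open cone, an admissible direction $\omega=\frac{\tau}{|\xi|}\hat\xi+\sqrt{1-\tau^2/|\xi|^2}\,e$, where $e$ is a unit vector orthogonal to $\xi$ and close to $\omega_0$.

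The only difference is the choice of $e$ and of the cone. The paper takes $e=\omega_1$, with $\xi\perp\omega_1$ and $|\omega_1-\omega_0|<\varepsilon/4$, on a cone inside $\{|\tau|<\frac{\varepsilon}{8}|\xi|\}$; that is the region where its later lemmas are applied, so downstream you would intersect your cone with it. Your canonical choice $e=\omega_0^\perp/|\omega_0^\perp|$ instead gives a cone around the hyperplane $\tau=\omega_0\cdot\xi$.

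Two small points to make explicit. Small $\eta$ must also force $|s|<1$, so that $\sqrt{1-s^2}$ is defined. And you should impose $|\omega_0^\perp|>c$ strictly, so that $\mathcal{S}$ is open.
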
  
\begin{proof}
Armed with \eqref{5.31}, we next identify a subset of $\R^{1+n}$ on which this estimate \eqref{Fourier estimates along ray} holds. Define
\[
M_{\varepsilon}=\displaystyle \cup_{|\og-\og_0|<{\varepsilon/2}}\og^\perp\m{ and } N_{\varepsilon}=\{(\tau,\xi)\in\R\times M_{\varepsilon/2};\ |\tau|<\tfrac{\varepsilon}{8}|\xi|\}.
\]
Since the interior of $M_\varepsilon$ is nonempty, one may select an open cone $\mc{S}\subset N_{\varepsilon}$. Assume $(\tau_0,\xi_0)\in \mc{S}$. Then there exists $\og_1\in\mb{S}^{n-1}$ with $|\og_1-\og_0|<\frac{\varepsilon}{4}$ such that $\xi_0\cdot\og_1=0$. We then define
\[
\og=\frac{\tau_0}{|\xi_0|^2}\xi_0+\rt{1-\frac{\tau_0^2}{|\xi_0|^2}}\og_1,
\]
so that $\tau_0=\xi_0\cdot \og$. Moreover, using $|\tau_0|<\frac{\varepsilon}{8}|\xi_0|$ (since $(\tau_0,\xi_0)\in\mc{S}\subset N_\varepsilon$) and $|\og_1-\og_0|<\frac{\varepsilon}{4}$, we obtain
\begin{align}
|\og-\og_0|
&\le\left|\frac{\tau_0}{|\xi_0|^2}\right||\xi_0|
+\rt{1-\frac{\tau_0^2}{|\xi_0|^2}}|\og_1-\og_0|
+\left|\left(\rt{1-\frac{\tau_0^2}{|\xi_0|^2}}-1\right)\og_0\right|\nn\\
&\le\frac{\varepsilon}{8}+\frac{\varepsilon}{4}+\frac{\varepsilon}{8}
=\frac{\varepsilon}{2}.\nn
\end{align}
Because $(\tau_0,\xi_0)\in\mc{S}$ was arbitrary, it follows that for every $(\tau,\xi)\in\mc{S}$ there exists $\og\in\mb{S}^{n-1}$ satisfying $|\og-\og_0|<\varepsilon/2$ and $\tau=\og\cdot\xi$, and consequently
\[
\bigl|(1,-\og)\cdot\widehat{\mc{C}^{j}}(\tau,\xi)\bigr|
\le C\left(\frac{1}{\lambda^\delta}+ e^{\beta\lambda}\|\Lambda_1-\Lambda_2\|\right).
\]
Hence the result follows.
\end{proof}
Since 
$
\div_{(t,x)}\big(\mc{A}^{(1)},\mc{B}^{(1)}\big)= \div_{(t,x)}\big(\mc{A}^{(2)},\mc{B}^{(2)}\big),
$
application of the space–time Fourier transform \eqref{defn:Fourier transform} implies the transversality condition
\[
(\tau,\xi)\cdot\widehat{\mc{C}^{j}}(\tau,\xi)=0.
\]
We next derive uniform bounds for the vector potential on a cone by exploiting this divergence-free constraint. In particular, we invoke the following estimates established in \cite{senapati2020stability}.

\begin{lemma}\cite{senapati2020stability}\label{set of equation and bound for G}
Fix $(\tau,\xi)\in\mb{R}^{1+n}$ such that $|\tau|< \frac{1}{2}|\xi|$, and consider the system
\begin{align}\label{unit vector}
\begin{aligned}
(1,-\omega(\tau,\xi))\cdot\widehat{\mc{C}^{j}}(\tau,\xi)&= G(\xi,\omega(\tau,\xi)),\\
(\tau,\xi)\cdot\widehat{\mc{C}^{j}}(\tau,\xi)&=0, \\
\m{ where } \omega(\tau,\xi)\in\mb{S}^{n-1} &\m{ satisfies } \omega(\tau,\xi)\cdot\xi=\tau.
\end{aligned}
\end{align}
Then there exists a constant $C>0$, independent of $\{(\tau,\xi)\in\mb{S}^{n}:|\tau|<\frac{1}{2}|\xi|\}$, such that for a suitable selection of vectors $\omega^k(\tau,\xi)$ satisfying \eqref{unit vector}, one has
\begin{align*}
\widehat{C^j}_i(\tau,\xi)&=\sum_{k=1}^{n}c_{k,i}(\tau,\xi)G(\xi,\omega^k(\tau,\xi)),\quad\text{ for all } i\in\{0,\ldots,n\},
\end{align*}
where
$
c_{k,i}(\tau,\xi)=\frac{1}{\det M_{(\tau,\xi)}}\,C_{i,k}(\tau,\xi),
$
and $C_{i,k}(\tau,\xi)$ denotes the $(i,k)^{\mathrm{th}}$ cofactor of the matrix $M_{(\tau,\xi)}$ given by 
\begin{align*}
    M_{(\tau,\xi)}=\begin{pmatrix}
1 & -\og_1^1(\tau,\xi) & \cdots & -\og_1^n(\tau,\xi) \\
1 & -\og_2^1(\tau,\xi) & \cdots & -\og_2^n(\tau,\xi) \\
\vdots  & \vdots  & \ddots & \vdots  \\
1 & -\og_n^1(\tau,\xi) & \cdots & -\og_n^n(\tau,\xi) \\
\frac{\tau}{\rt{{\tau}^2+|\xi|^2}} & \frac{\xi^1}{\rt{{\tau}^2+|\xi|^2}} & \cdots & \frac{\xi^n}{\rt{{\tau}^2+|\xi|^2}} 
\end{pmatrix}.
\end{align*}
Moreover, we have
\[
|\widehat{C^{j}}_i(\tau,\xi)|\le C\max\limits_{1\le k\le n}\big|G(\xi,\omega^k(\tau,\xi))\big|, \quad \text{for all } i\in\{0,1,\dots,n\}.
\]
\end{lemma}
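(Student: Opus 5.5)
This is a finite-dimensional linear-algebra statement at a fixed frequency, so the plan is to build an invertible $(n+1)\times(n+1)$ system and apply Cramer's rule with uniform control of the determinant. Fix $(\tau,\xi)$ with $|\tau|<\frac12|\xi|$, $\xi\neq 0$. By homogeneity we may normalize $(\tau,\xi)\in\mathbb{S}^n$; the last row of $M_{(\tau,\xi)}$ is already normalized, and the unknowns $\widehat{C^j}_i(\tau,\xi)$ enter linearly.

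\textbf{Step 1: choose the directions.} We need $\omega^1,\dots,\omega^n\in\mathbb{S}^{n-1}$ with $\omega^k\cdot\xi=\tau$, so that the rows $(1,-\omega^k)$ together with $(\tau,\xi)/|(\tau,\xi)|$ are linearly independent. Writing $\omega=\frac{\tau}{|\xi|^2}\xi+\sqrt{1-\tau^2/|\xi|^2}\,\eta$ with $\eta\in\xi^\perp\cap\mathbb{S}^{n-1}$, the admissible $\omega$ form an $(n-2)$-sphere of radius $r=\sqrt{1-\tau^2/|\xi|^2}\ge\sqrt3/2$ inside $\xi^\perp$ shifted along $\xi$. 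Take an orthonormal basis $e_1,\dots,e_{n-1}$ of $\xi^\perp$ and set $\eta^k=e_k$ for $k\le n-1$, $\eta^n=-e_1$ (or $-\frac{1}{\sqrt{n-1}}\sum e_k$). Each $\omega^k$ then satisfies the constraint in \eqref{unit vector}, and the system \eqref{unit vector} reads $M_{(\tau,\xi)}\widehat{C^j}=(G(\xi,\omega^1),\dots,G(\xi,\omega^n),0)^T$.

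\textbf{Step 2: lower bound on $|\det M_{(\tau,\xi)}|$.} This is the main obstacle. Subtract row $n$ from rows $1,\dots,n-1$. The rows $(0,-r(e_k-\eta^n))$ are then purely spatial in $\xi^\perp$ and linearly independent with size $\sim r$. The remaining rows $(1,-\frac{\tau}{|\xi|^2}\xi-r\eta^n)$ and $(\tau,\xi)/|(\tau,\xi)|$ reduce modulo $\xi^\perp$ to the $2\times2$ block $\begin{pmatrix}1&-\tau/|\xi|\\ \tau/\rho&|\xi|/\rho\end{pmatrix}$ in the $(t,\hat\xi)$ coordinates, where $\rho=|(\tau,\xi)|$. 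Its determinant is $(|\xi|^2+\tau^2)/(|\xi|\rho)=\rho/|\xi|\ge1$. Hence $|\det M|\ge c\,r^{n-1}\ge c(\sqrt3/2)^{n-1}$, uniformly on the set $\{|\tau|<\frac12|\xi|\}$. This is where the hypothesis $|\tau|<\frac12|\xi|$ is used, since it keeps $r$ away from $0$. I would first check the triangular structure carefully in rotated coordinates, because the $\eta^n$ contributions must be absorbed correctly.

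\textbf{Step 3: Cramer's rule and the bound.} Cramer's rule gives $\widehat{C^j}_i=\sum_k c_{k,i}G(\xi,\omega^k)$ with $c_{k,i}=C_{i,k}/\det M$. The last right-hand-side entry is $0$, so only $k\le n$ appear. All entries of $M$ lie in $[-1,1]$, so each cofactor is bounded by $n!$. Combined with Step 2, this gives $|c_{k,i}|\le C(n)$, and therefore $|\widehat{C^j}_i|\le C\max_k|G(\xi,\omega^k)|$. Finally, by $0$-homogeneity of $\omega^k$ and of the normalized last row, the constant extends from the unit sphere to the whole cone $\{|\tau|<\frac12|\xi|\}$.
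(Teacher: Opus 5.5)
Your proof is correct. The point you flagged in Step~2 does go through. Work in the orthonormal coordinates $(t,\xi/|\xi|,e_1,\dots,e_{n-1})$. After subtracting row $n$ from rows $1,\dots,n-1$, list rows $n,n+1$ and the columns $t,\xi/|\xi|$ first. The matrix is then block upper triangular. The term $-r\eta^n$ in row $n$ sits in the off-diagonal block, so it plays no role. The remaining $(n-1)\times(n-1)$ block has rows $-r(e_k+e_1)$, so it is triangular with determinant of modulus $2r^{n-1}$. Hence
\[
|\det M_{(\tau,\xi)}|=\frac{\rho}{|\xi|}\,2r^{n-1}\ \ge\ 2\Big(\frac{\sqrt3}{2}\Big)^{n-1}
\]
for every choice of orthonormal basis of $\xi^\perp$.

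There is a quicker way to see this structure. Since $(1,-\omega^k)\cdot(\tau,\xi)=\tau-\omega^k\cdot\xi=0$, the last row of $M_{(\tau,\xi)}$ is a unit normal to the span of the first $n$ rows. So $|\det M_{(\tau,\xi)}|$ is simply the $n$-dimensional volume spanned by the vectors $(1,-\omega^k)$.

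One cosmetic point about indices. Cramer's rule gives $\widehat{C^j}_i=\frac{1}{\det M_{(\tau,\xi)}}\sum_k C_{k,i}\,G(\xi,\omega^k)$, where $C_{k,i}$ is the cofactor of the entry in row $k$, column $i$. The statement's $C_{i,k}$ should therefore be read as the $(i,k)$ entry of the adjugate. This does not affect the bound.

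The paper itself gives no proof of this lemma; it imports it from \cite{senapati2020stability}. The analogous step it does carry out is in the proof of Lemma~\ref{matrix potential estimate in cone}, where it bounds $|\det M_{(\tau,\xi)}|$ from below non-constructively. There the $\omega_k(\tau,\xi)$ are chosen continuously near a base point $(\tau_0,\xi_0)$. The extreme value theorem then gives a positive lower bound on a compact sub-neighbourhood, and $0$-homogeneity spreads this bound to a cone.

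Your explicit computation gives one constant, depending only on $n$, on the whole cone $\{|\tau|<\frac12|\xi|\}$. Because $|\det M_{(\tau,\xi)}|$ does not depend on the frame chosen in $\xi^\perp$, you need no continuity of $\omega^k$ in $(\tau,\xi)$ at all; a global continuous frame of $\xi^\perp$ need not even exist. The paper's softer argument is the one adapted to the partial-data setting. There the $\omega_k$ must lie in the cap $|\omega-\omega_0|<\varepsilon$, and your frame $\pm e_k$ is no longer admissible.
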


To control the Fourier transform of the vector potentials on large balls, we subsequently employ the Vessella’s analytic continuation result. As a preliminary step, we first establish Fourier estimates uniform on a cone and, in particular, independent of individual light-ray directions; for this purpose, we follow the approach developed in \cite{salazar2014stability}.
Moreover, when $n=2$, punctured spherical caps are disconnected, which accounts for the failure of the partial data strategy in that dimension. Consequently, throughout this argument we restrict attention to dimensions $n\ge 3$. Let $(\tau,\xi)\in\mb{R}^{1+n}$ satisfy $|\tau|<\frac{\varepsilon}{8}|\xi|$, and define the subset $\mc{B}_{\tau,\xi}\subset\mb{S}^{n-1}$ by
\[
\mc{B}_{\tau,\xi}:=\{\omega\in\mb{S}^{n-1}:\ \omega\cdot\xi=\tau \ \mbox{and}\ |\omega-\omega_0|<\varepsilon\}.
\]
The following geometric property, proved in \cite{senapati2020stability}, will be used repeatedly.

\begin{lemma}\cite{senapati2020stability}\label{existence of independent vectors}
Assume $n\geq 3$ and that $\mc{B}_{\tau,\xi}$ is nonempty. Then $\mc{B}_{\tau,\xi}$ contains $n$ linearly independent vectors.
\end{lemma}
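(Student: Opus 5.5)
The plan is to reduce the question to the geometry of a punctured spherical cap. Write $\xi\neq 0$ and set $\hat\xi:=\xi/|\xi|$. The condition $\omega\cdot\xi=\tau$ says that $\omega\cdot\hat\xi=s$, where $s:=\tau/|\xi|$ and $|s|<\varepsilon/8$. So the set
\[
\{\omega\in\mb{S}^{n-1}:\ \omega\cdot\xi=\tau\}
\]
is the $(n-2)$-dimensional sphere $S_s:=\{s\hat\xi+\sqrt{1-s^2}\,\eta:\ \eta\in\mb{S}^{n-1},\ \eta\perp\hat\xi\}$. This sphere has dimension at least $1$ because $n\geq 3$. Its affine span is the hyperplane $\{y:\ y\cdot\hat\xi=s\}$. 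The set $\mc{B}_{\tau,\xi}$ is the intersection of $S_s$ with the open ball $\{|\omega-\omega_0|<\varepsilon\}$, so it is a relatively open subset of $S_s$, and by hypothesis it is nonempty.

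First I would pick $\omega^1\in\mc{B}_{\tau,\xi}$. Since $\mc{B}_{\tau,\xi}$ is relatively open in $S_s$, there is a small geodesic neighbourhood $U\subset S_s$ of $\omega^1$ contained in $\mc{B}_{\tau,\xi}$. The key claim is that any nonempty relatively open subset of a round sphere of dimension $k=n-2\geq1$ linearly spans the ambient space $\mb{R}^n$, not only the affine hyperplane. To see this, note that $U$ affinely spans the hyperplane $H_s=\{y\cdot\hat\xi=s\}$, because an open piece of a $k$-sphere is not contained in any affine subspace of dimension $<k+1$ (a $k$-sphere meets a proper affine subspace of its span in a lower-dimensional sphere, which has empty relative interior). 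This gives $n$ affinely independent points $\omega^1,\dots,\omega^n\in U$ whose affine hull is $H_s$.

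It remains to upgrade affine independence to linear independence. I split into two cases. If $s\neq0$, then $H_s$ does not pass through the origin, and affinely independent points spanning a hyperplane that misses $0$ are linearly independent. If $s=0$, then $H_0=\hat\xi^\perp$ has dimension $n-1$, so only $n-1$ linearly independent vectors can be taken from $S_0$. In that case, however, $S_0$ is a great sphere, and the statement must be read as in \cite{senapati2020stability}: one needs $n$ linearly independent vectors of the form $(1,-\omega^k)$. These are linearly independent as soon as the $\omega^k$ are affinely independent, which is exactly the case $s=0$ above. So in both cases I would prove the affine statement and record that it is what makes the matrix $M_{(\tau,\xi)}$ of Lemma \ref{set of equation and bound for G} nonsingular. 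Concretely, the $n$ rows $(1,-\omega^k)$ together with the normalized $(\tau,\xi)$ row are independent. Indeed, $(\tau,\xi)\cdot(1,-\omega^k)=\tau-\xi\cdot\omega^k=0$, so the last row is orthogonal to all the others.

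The main obstacle is not the independence itself but quantitative control. The later use, namely a uniform bound on $c_{k,i}$, requires $|\det M_{(\tau,\xi)}|$ to be bounded below uniformly over the cone. I would handle this by choosing the $\omega^k$ explicitly rather than abstractly. Fix an orthonormal basis $e_1,\dots,e_{n-1}$ of $\hat\xi^\perp$ with $\eta^1:=(\omega^1-s\hat\xi)/\sqrt{1-s^2}=e_1$. Then set
\[
\omega^k=s\hat\xi+\sqrt{1-s^2}\big(\cos\theta\, e_1+\sin\theta\, e_k\big)\quad\text{for }k=2,\dots,n-1,
\]
with a fixed small $\theta$ comparable to $\varepsilon$. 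One more point, for instance with $-\sin\theta\, e_2$, gives the $n$-th vector. For $\theta$ small these points stay in $\mc{B}_{\tau,\xi}$ whenever $\omega^1$ lies well inside it, which the margin $|\omega^1-\omega_0|<\varepsilon/2$ in Corollary \ref{existence of a cone} provides. The determinant is then an explicit nonzero function of $\theta$ and $s$, bounded below on $|s|<\varepsilon/8$.
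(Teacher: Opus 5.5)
Your proof is correct, and it is more careful than the statement. The paper gives no argument of its own; it only cites \cite{senapati2020stability}. As literally written, the lemma is false for $\tau=0$. In that case $\mc{B}_{0,\xi}\subset\xi^{\perp}$ with $\xi\neq0$, so it contains at most $n-1$ linearly independent vectors. Yet $\tau=0$ is allowed by the hypotheses, and $\mc{B}_{0,\xi}\ni\omega_0$ whenever $\xi\perp\omega_0$.

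Your route is as follows. A nonempty relatively open piece of the $(n-2)$-sphere $S_s$ lies in no proper affine subspace of $H_s$, so it contains $n$ affinely independent points. These are linearly independent when $s\neq0$. This proves the lemma for $\tau\neq0$. For every $\tau$, it proves what the paper actually uses right afterwards: the vectors $(1,-\omega_k)$ are linearly independent, so $M_{(\tau,\xi)}$ is invertible. Only your first phrasing of the ``key claim'' (that an open piece linearly spans $\mb{R}^n$) is too strong, and your case split fixes it.

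The real difference from the paper is in the quantitative step. In Lemma \ref{matrix potential estimate in cone}, the paper asserts without proof a continuous selection $\omega_k(\tau,\xi)$ near a single point $(\tau_0,\xi_0)$. It then gets $|\det M_{(\tau,\xi)}|\ge c$ only on a compact neighbourhood of that point, which shrinks the cone. Your explicit frame instead gives
\[
|\det M_{(\tau,\xi)}|=\sqrt{1+s^2}\,(1-s^2)^{\frac{n-1}{2}}\,|\det D_\theta|,
\]
where $D_\theta$ is the matrix with rows $\eta^k-\eta^1$. This is bounded below uniformly for $|s|<\varepsilon/8$, and no continuity of the selection is needed.

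One caveat for that downstream use. The estimate \eqref{Fourier estimates along ray} is only established for $|\omega-\omega_0|<\varepsilon/2$, not on all of $\mc{B}_{\tau,\xi}$, whose radius is $\varepsilon$. So take $\theta$ below the margin actually left by the proof of Corollary \ref{existence of a cone}. That construction gives $|\omega^1-\omega_0|<\frac{3\varepsilon}{8}+\frac{\varepsilon^2}{64}$, which leaves room inside the $\varepsilon/2$-ball.
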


Employing Lemma \ref{existence of independent vectors}, we denote the set of $n$ linearly independent vectors of $\mc{B}_{\tau_0,\xi_0}$ by $\{\omega_k(\tau_0,\xi_0)\}_{1\le k\le n}\subset\mathbb{S}^{n-1}$, where $(\tau_0,\xi_0)\in \mc{S}\cap\mb{S}^n$. In addition, invoking Corollary \ref{existence of a cone}, we have $(\tau_0,\xi_0)\cdot(1,-\omega_k(\tau_0,\xi_0))=0,$ for each $k=1,2,\dots,n,$ and the set $\{(1,-\omega_k(\tau_0,\xi_0))\}_{1\le k\le n}$ is linearly independent in $\mathbb{R}^{1+n}$. Therefore the matrix $M_{(\tau_0,\xi_0)}$ given by
    \begin{align*}
M_{(\tau_0,\xi_0)}=\begin{pmatrix}
1 & -\og_1^1(\tau_0,\xi_0) & \cdots & -\og_1^n(\tau_0,\xi_0) \\
1 & -\og_2^1(\tau_0,\xi_0) & \cdots & -\og_2^n(\tau_0,\xi_0) \\
\vdots  & \vdots  & \ddots & \vdots  \\
1 & -\og_n^1(\tau_0,\xi_0) & \cdots & -\og_n^n(\tau_0,\xi_0) \\
\frac{\tau_0}{\rt{{\tau_0}^2+|\xi_0|^2}} & \frac{\xi_0^1}{\rt{{\tau_0}^2+|\xi_0|^2}} & \cdots & \frac{\xi_0^n}{\rt{{\tau_0}^2+|\xi_0|^2}} 
\end{pmatrix} 	
\end{align*} 
    is invertible, where $\xi_0=(\xi_0^1, \xi_0^2,\cdots,\xi_0^n)$. The conclusion of Corollary \ref{existence of a cone} can be strengthened by establishing a Fourier-transform estimate that is independent of light-ray directions in an open cone subset of $\mathcal{S}$. To do so, we invoke Lemma \ref{existence of independent vectors}, which allows us to construct an invertible matrix from a collection of linearly independent vectors. The non-vanishing determinant of this matrix is essential for guaranteeing the existence of an open cone. In particular, we establish the following lemma.
\begin{lemma}\label{matrix potential estimate in cone}
Let $\mc{S}$ be the open cone as described in Corollary \ref{existence of a cone}. Then there exists an open cone $\mc{S}_0\subseteq\mc{S}$ such that the following holds
\[|\widehat{\mc{C}^{j}}(\tau,\xi)|\le C\left(\frac{1}{\lambda^\delta}+ e^{\beta\lambda}\|\Lambda_1-\Lambda_2\|\right),\quad \m{for }(\tau,\xi)\in\mc{S}_0.\]
\end{lemma}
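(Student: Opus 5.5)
The plan is to combine Corollary \ref{existence of a cone} with the transversality relation $(\tau,\xi)\cdot\widehat{\mc{C}^{j}}(\tau,\xi)=0$, which follows from the divergence hypothesis. Together they form a linear system for the $n+1$ components of $\widehat{\mc{C}^{j}}(\tau,\xi)$. We then need a cone on which the coefficient matrix $M_{(\tau,\xi)}$ stays uniformly invertible.

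\textbf{Base point and linear system.} Fix $(\tau_0,\xi_0)\in\mc{S}\cap\mb{S}^n$. By Lemma \ref{existence of independent vectors} (this is where $n\ge3$ enters), $\mc{B}_{\tau_0,\xi_0}$ contains $n$ linearly independent vectors $\omega_1,\dots,\omega_n$. As observed before the statement, $M_{(\tau_0,\xi_0)}$ is then invertible: the first $n$ rows $(1,-\omega_k)$ are independent and orthogonal to $(\tau_0,\xi_0)$, and the last row is the unit vector $(\tau_0,\xi_0)/|(\tau_0,\xi_0)|$. Let $d_0:=|\det M_{(\tau_0,\xi_0)}|>0$.

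\textbf{Perturbing to a cone.} For $(\tau,\xi)\in\mb{S}^n$ near $(\tau_0,\xi_0)$, I would build $\omega_k(\tau,\xi)$ depending continuously on $(\tau,\xi)$. Following the construction in the proof of Corollary \ref{existence of a cone}, write
\[
\omega_k(\tau,\xi)=\frac{\tau}{|\xi|^2}\xi+\sqrt{1-\tfrac{\tau^2}{|\xi|^2}}\;\frac{\omega_k^{\perp}}{|\omega_k^{\perp}|},
\]
where $\omega_k^\perp$ is the projection of $\omega_k$ onto $\xi^\perp$. These vectors satisfy $\omega_k\cdot\xi=\tau$, depend continuously on $(\tau,\xi)$, and coincide with $\omega_k$ at $(\tau_0,\xi_0)$. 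Shrinking the neighbourhood keeps $|\omega_k-\omega_0|<\varepsilon/2$, so Corollary \ref{existence of a cone} applies to each direction. Since the determinant is continuous, there is a neighbourhood $U\subset\mb{S}^n$ of $(\tau_0,\xi_0)$ on which $|\det M_{(\tau,\xi)}|\ge d_0/2$, and with $U$ small enough, $U\subset\mc{S}$. The cofactors are bounded by a constant depending only on $n$, since all entries have modulus at most $1$. Define $\mc{S}_0:=\{r\theta:\ r>0,\ \theta\in U\}$, an open cone contained in $\mc{S}$.

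\textbf{Homogeneity and the final bound.} Scaling by $r>0$ preserves everything: replacing $\xi$ by $r\xi$ and $\tau$ by $r\tau$, the constraint $\omega\cdot\xi=\tau$ is homogeneous, the normalized last row is unchanged, and so the same $\omega_k$ and the same $M$ serve along each ray. Hence the cofactor bound and $|\det M|\ge d_0/2$ hold on all of $\mc{S}_0$. Apply Lemma \ref{set of equation and bound for G} with
\[
G(\xi,\omega_k)=(1,-\omega_k)\cdot\widehat{\mc{C}^{j}}(\tau,\xi),
\]
together with the second equation $(\tau,\xi)\cdot\widehat{\mc{C}^{j}}(\tau,\xi)=0$. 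Cramer's rule gives
\[
|\widehat{\mc{C}^j}_i(\tau,\xi)|\le C\max_k|G(\xi,\omega_k)|,
\]
and each $|G(\xi,\omega_k)|$ is bounded by Corollary \ref{existence of a cone}. This yields the stated estimate with $C$ independent of $(\tau,\xi)\in\mc{S}_0$.

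The main obstacle is \textbf{uniformity}: the $\omega_k(\tau,\xi)$ must be chosen continuously, stay within $\varepsilon/2$ of $\omega_0$, and keep the determinant bounded below simultaneously on the whole cone. The continuity-plus-homogeneity argument above handles all three at once, and it is the step I would check most carefully.
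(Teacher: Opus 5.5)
Your proposal is correct and follows essentially the same route as the paper: a base point in $\mc{S}\cap\mb{S}^n$, $n$ independent directions from Lemma \ref{existence of independent vectors}, a continuous choice of $\omega_k(\tau,\xi)$ that keeps $|\det M_{(\tau,\xi)}|$ bounded below near the base point, extension to the cone $\mc{S}_0=\bigcup_{r>0}rU$ by degree-zero homogeneity, and Cramer's rule fed by the bound from Corollary \ref{existence of a cone}; your explicit formula for $\omega_k(\tau,\xi)$ and your direct continuity argument for the determinant just make precise what the paper only asserts. One small repair, which the paper also glosses over: $\mc{B}_{\tau_0,\xi_0}$ only gives $|\omega_k-\omega_0|<\varepsilon$, whereas Corollary \ref{existence of a cone} needs $|\omega_k-\omega_0|<\varepsilon/2$ already at the base point, so you should apply Lemma \ref{existence of independent vectors} to the cap of radius $\varepsilon/2$ (and take $\tau_0\neq 0$, since for $\tau_0=0$ that set lies in the $(n-1)$-dimensional space $\xi_0^\perp$ and cannot contain $n$ linearly independent vectors).
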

\begin{proof}
Let $\tilde{\mc{S}}$ denote the set of parameters $(\tau,\xi)\in\mathbb{S}^{n}$ contained in a sufficiently small neighborhood of $(\tau_0,\xi_0)$ such that the hyperplane
$
\{x\in\mathbb{R}^n;\ \xi\cdot x=\tau\}
$
depends continuously on $(\tau,\xi)$. More precisely, as $(\tau,\xi)$ varies continuously in $\tilde{\mc{S}}$, one may choose a family of $n$ linearly independent vectors $\{\omega_k(\tau,\xi)\}_{1\le k\le n}\subset \mc{B}_{\tau,\xi}$ depending continuously on $(\tau,\xi)$. Consequently, the matrix $M_{(\tau,\xi)}$ varies continuously with $(\tau,\xi)$ and retains its non-singular behaviors on $\tilde{\mc{S}}$, i.e. $\det M_{(\tau,\xi)}\neq 0$ for all $(\tau,\xi)\in\tilde{\mc{S}}$.
Moreover, due to the extreme-value theorem, there exists a constant  $c>0$ such that
\[
0<c\le |\det M_{(\tau,\xi)}|,\quad \text{for all }(\tau,\xi)\in\tilde{\tilde{\mc{S}}}\Subset \tilde{\mc{S}}.
\]
For $(\tau,\xi)\in\tilde{\tilde{\mc{S}}}$, consider a linear system
\begin{align}
\label{system1}(1,-\og_k(\tau,\xi))\cdot \widehat{\mc{C}^{j}}(\tau,\xi)&=G(\xi,\omega_k(\tau,\xi)),\quad\m{ for }k=1,\ldots,n,\\
\frac{1}{\sqrt{\tau^2+\xi^2}}(\tau,\xi)\cdot \widehat{\mc{C}^{j}}(\tau,\xi)&=0.
\end{align}
We can write the above system in matrix form as $M_{(\tau,\xi)} X=B$, where 
\begin{align*}
M_{(\tau,\xi)}=\begin{pmatrix}
1 & -\og_1^1(\tau,\xi) & \cdots & -\og_1^n(\tau,\xi) \\
1 & -\og_2^1(\tau,\xi) & \cdots & -\og_2^n(\tau,\xi) \\
\vdots  & \vdots  & \ddots & \vdots  \\
1 & -\og_n^1(\tau,\xi) & \cdots & -\og_n^n(\tau,\xi) \\
\frac{\tau}{\rt{{\tau}^2+|\xi|^2}} & \frac{\xi^1}{\rt{{\tau}^2+|\xi|^2}} & \cdots & \frac{\xi^n}{\rt{{\tau}^2+|\xi|^2}} 
\end{pmatrix},\quad X=\begin{pmatrix}
    \widehat{C^j}_0\\\widehat{C^j}_1\\ \vdots\\\widehat{C^j}_n
\end{pmatrix},\quad B=\begin{pmatrix}
    G(\xi, \o_1\\G(\xi, \o_1)\\ \vdots\\ G(\xi, \o_n)\\0
\end{pmatrix}.
\end{align*} 
Corollary \ref{existence of a cone} provides an upper bound for $G$ on the cone $\mc{S}$. Also,  $M_{(\tau,\xi)}$ is a homogeneous matrix of degree zero that is invertible on $\tilde{\tilde{\mc{S}}}$. Combining these properties, one can transfer local estimates from $\tilde{\tilde{\mc{S}}}$ to corresponding estimates on the open cone in $\mathbb{R}^{1+n}$ generated by $\tilde{\tilde{\mc{S}}}$.
In particular, by Lemma \ref{set of equation and bound for G}, for $(\tau,\xi)\in\tilde{\tilde{\mc{S}}}$ and $r>0$ we obtain
\begin{align*}
\widehat{C^j}_i(r\tau,r\xi)&=\sum_{k=1}^{n}c_{k,i}(r\tau,r\xi)G(r\xi,\omega^k(r\tau,r\xi)),\quad i\in\{0,\ldots,n\}\\
&=\sum_{k=1}^{n}c_{k,i}(\tau,\xi)G(r\xi,\omega^k(\tau,\xi)),
\end{align*}
where
$
c_{k,i}(\tau,\xi)=\frac{1}{\det M(\tau,\xi)}\,C_{i,k}(\tau,\xi),
$
and $C_{i,k}(\tau,\xi)$ denotes the $(i,k)^{\mathrm{th}}$ cofactor of $M_{(\tau,\xi)}$. Now for the open cone $\mc{S}_0\ (\equiv\cup_{r>0}r\tilde{\tilde{\mc{S}}})$ in $\mathbb{R}^{1+n}$, Corollary \ref{existence of a cone} along with previous calculation ensures the existence of $C>0$ such that the estimate
\begin{align}
|\widehat{C^j}_i(\tau,\xi)|\le C\left(\frac{1}{\lambda^\delta}+ e^{\beta\lambda}\|\Lambda_1-\Lambda_2\|\right),\label{oncone}
\end{align}
holds for all $i\in\{0,\ldots,n\}$.
This completes the proof.
\end{proof}
As discussed earlier, Vessella's analytic continuation argument (refer to \cite{vessella1999continuous}) is now required to derive Fourier estimates of vector potentials over arbitrarily large balls.
To this end, we define an analytic function \[f_k(t,x)= 
\widehat{\mc{C}^{j}}(kt,kx)\quad\m{ for }k>0\m{ and }(t,x)\in\R^{1+n}.\]   
Indeed, for any multi-index $\gamma$, an application of the Fourier transform leads to
\begin{align}      
\quad|\partial_{(t,x)}^{\gamma}f_k(t,x)|&=\ |\partial_{(t,x)}^{\gamma}\widehat{\mc{C}^{j}}(kt,kx)|,\\
&=\left|\int_{\R^{1+n}}e^{-\mathrm{i}k(s,y)\cdot (t,x)}(-\mathrm{i}k)^{|\gamma|}(s^2+|y|^2)^{\frac{|\gamma|}{2}}\mc{C}^{j}(s,y)\ dsdy\right| \\
&\le(2T^2)^{\frac{|\gamma|}{2}}k^{|\gamma|}\int_{\R^{1+n}}|\mc{C}^{j}(s,y)|\ dsdy  \\
&\le C\ T^{|\gamma|}\ k^{|\gamma|}
,
\end{align} 
where we have used $ T>\diam(\Omega)$ and $\mc{C}^{j}\in C_c^\infty(\mathbb{R}^{1+n})$. On further simplification, the previous expression boils down to
\begin{align}
&\quad|\partial_{(t,x)}^{\gamma}f_k(t,x)|\le C \ T^{|\gamma|}k^{|\gamma|}\frac{|\gamma|!}{|\gamma|!}\leq  C \ T^{|\gamma|}e^{k}|\gamma|!\ \m{ for }(t,x)\in\R^{1+n}\m{ and multi-index }\gamma. \label{prevessellla1}  
\end{align}
Since $f_k$ satisfies \eqref{prevessellla1}, we may invoke the conditional stability estimate of Vessella \cite{vessella1999continuous} for $f_k$. In particular, there exists $\theta\in(0,1)$ such that
\begin{align}
\|f_k\|_{\LS^\infty(B(0,1))}\le C e^{k(1-\theta)}\,\|f_k\|_{\LS^\infty(\mc{S}_0\cap B(0,1))}^{\theta}.
\end{align}
Substituting $f_k(t,x)=\widehat{\mc{C}^{j}}(kt,kx)$ into the preceding inequality and applying Lemma \ref{matrix potential estimate in cone}, we obtain
\begin{align}
\|\widehat{\mc{C}^{j}}\|_{\LS^\infty(B(0,k))}\le C e^{k(1-\theta)}\left(\frac{1}{\lambda^\delta}+ e^{\beta\lambda}\|\Lambda_1-\Lambda_2\|\right)^{\theta}.
\label{vassella2}
\end{align}
We next derive an upper bound for $\mc{C}^{j}$ in the Sobolev space $\HS^{-1}(\R^{1+n})$ in terms of the input--output operator. This estimate, combined with the Sobolev embedding theorem, yields the desired stability bound for $(\mathcal{A},\mathcal{B})$ which is in a much stronger Sobolev norm. Specifically, using \eqref{vassella2} together with a priori assumptions on the potentials, we estimate $\|\mc{C}^{j}\|_{\HS^{-1}(\R^{1+n})}$ in terms of the norm of the input--output operator.
\begin{align}
    \|\mc{C}^{j}\|_{\HS^{-1}(\R^{1+n})}^{\frac{2}{\theta}}=\left(\int_{\R^{1+n}}(1+s^2+|y|^2)^{-1}|\widehat{\mc{C}^{j}}(s,y)|^2dsdy\right)^{\frac{1}{\theta}}.
\end{align}
We split the inner integral over the disjoint sets $B(0,k)\cup B(0,k)^c$ and exploit the elementary bound $(1+s^2+\lvert y\rvert^2)^{-1}\leq 1$. Moreover, for any $(s,y)\in B(0,k)^c$, one has the lower estimate $s^2+\lvert y\rvert^2\geq k^2$. Finally, invoking the convexity of the function $h(x)=x_{+}^{1/\theta}$ for $\theta\in(0,1)$, together with \eqref{vassella2}, yields
\begin{align}\label{last1}    
\begin{aligned}
\|\mc{C}^{j}\|_{\HS^{-1}(\R^{1+n})}^{\frac{2}{\theta}}&=\left(\int\limits_{B(0,k)}(1+s^2+|y|^2)^{-1}|\widehat{\mc{C}^{j}}(s,y)|^2dsdy+\int\limits_{B(0,k)^c}(1+s^2+|y|^2)^{-1}|\widehat{\mc{C}^{j}}(s,y)|^2dsdy\right)^{\frac{1}{\theta}}\\
&\le C\left(k^{n+1}\|\widehat{\mc{C}^{j}}\|_{L^\infty(B(0,k))}^2+\frac{1}{k^2}\right)^{\frac{1}{\theta}}\\
&\le C\left(k^{\frac{n+1}{\theta}}\|\widehat{\mc{C}^{j}}\|_{L^\infty(B(0,k))}^{\frac{2}{\theta}}+\frac{1}{k^{\frac{2}{\theta}}}\right)\\
&\le C\left( k^{\frac{n+1}{\theta}}e^{\frac{2k(1-\theta)}{\theta}}\left(\frac{1}{\lambda^{2\delta}}+ e^{\beta\lambda}\|\Lambda_1-\Lambda_2\|^2\right)+\frac{1}{k^{\frac{2}{\theta}}}\right)\\ 
&\le C\left(\underset{I}{\underbrace{\frac{k^{\frac{n+1}{\theta}}e^{\frac{2k(1-\theta)}{\theta}}}{\ld^{2\delta}}}}
+\underset{II}{\underbrace{k^{\frac{n+1}{\theta}}e^{\beta\ld+\frac{2k(1-\theta)}{\theta}}\|\Lambda_1-\Lambda_2\|^2}}+\underset{III}{\underbrace{\frac{1}{k^{\frac{2}{\theta}}}}}\right).
\end{aligned}
\end{align}
Now we choose $k$ sufficiently large and \[\ld=\exp\left({\dfrac{k(1-\theta)}{\theta\delta}}\right)k^{\frac{n+3}{2\theta\delta}}\] so that $I$ and $III$ are comparable. With this choice of $\lambda$, $II$ reduces to
\[{k^{\frac{n+1}{\theta}}\exp\left({\beta\left(\exp\left({\frac{k(1-\theta)}{\theta\delta}}k^{\frac{n+3}{2\theta\delta}}\right)\right)+\frac{2k(1-\theta)}{\theta}}\right)\|\Lambda_1-\Lambda_2\|^2}.\]
Hence there exists $\varsigma>0$, independent of $k$ such that 
\begin{align}\label{kchoice}
k^{\frac{n+1}{\theta}}\exp\left({\beta\left(\exp\left({\frac{k(1-\theta)}{\theta\delta}}k^{\frac{n+3}{2\theta\delta}}\right)\right)+\frac{2k(1-\theta)}{\theta}}\right)\|\Lambda_1-\Lambda_2\|^2\le C e^{e^{\varsigma k}}\|\Lambda_1-\Lambda_2\|^2.
\end{align}
	
 Choosing $k=\frac{1}{\varsigma}\log\big|\log\|\Lambda_1-\Lambda_2\|\big|$ (provided \(\|\Lambda_1 - \Lambda_2\| \) is small enough) will leads to
\begin{align}    
&\|\mc{C}^{j}\|_{\HS^{-1}(\mathbb{R}^{1+n})}^{\frac{2}{\theta}} \le C\left(\|\Lambda_1-\Lambda_2\|
+\left(\log|\log\|\Lambda_1-\Lambda_2\||\right)^{-\frac{2}{\theta}}\right).
\end{align}    
Summing over the indices $1\leq j\leq m$, we obtain
\begin{align}\label{last0}  \|\mc{C}\|_{\HS^{-1}(\mathbb{R}^{1+n})} \le C\left(\|\Lambda_1-\Lambda_2\|^{\frac{\theta}{2}}+\big|\log|\log\|\Lambda_1-\Lambda_2\||\big|^{-1}\right).
\end{align}
However, for \(\|\Lambda_1 - \Lambda_2\|\geq c \), where $c>0$ is fixed, we can proceed with the following steps.
\begin{align}    
\|\mc{C}^{j}\|_{\HS^{-1}(\mathbb{R}^{1+n})}^{\frac{2}{\theta}} \le C\|\mc{C}^{j}\|_{\LS^\infty(\mathbb{R}^{1+n})}^{\frac{2}{\theta}}\le \frac{C}{c}{c}&\le \frac{C}{c}\|\Lambda_1-\Lambda_2\|\ .
\end{align}
Again, summing over the indices $1\leq j\leq m$ to get
\begin{align}\label{last2}   
\|\mc{C}\|_{\HS^{-1}(\mathbb{R}^{1+n})}^{\frac{2}{\theta}} \le C\|\mc{C}\|_{\LS^\infty(\mathbb{R}^{1+n})}^{\frac{2}{\theta}}\le \frac{C}{c}\|\Lambda_1-\Lambda_2\|\ .
\end{align}
So, form both \eqref{last0} and \eqref{last2}, we conclude 
\begin{align}  \|\mc{C}\|_{\HS^{-1}(\mathbb{R}^{1+n})} \le C\left(\|\Lambda_1-\Lambda_2\|^{\frac{\theta}{2}}+\big|\log|\log\|\Lambda_1-\Lambda_2\||\big|^{-1}\right).\label{last3} 
\end{align}

As discussed earlier, we now translate the above norm estimates into stronger Sobolev norms using the argument of convexity. To this end, let $\alpha>1$ be arbitrary. Then there exists $\varrho\in(0,1)$ satisfying \[
\frac{1+n}{2}+\frac{\al}{2}=\varrho\left(\frac{n+1}{2}+\al\right)+(1-\varrho)(-1).
\]
Using $\|\mc{C}^{j}\|_{\HS^{\frac{n+1}{2}+\al}(\R^{1+n})}\le 4M_0$ (due to admissible set) in conjunction with Sobolev embedding and logarithmic convexity of Sobolev norms, we obtain 
\begin{align} 
\|\mc{C}^{j}\|_{\LS^\infty{(\mathbb{R}^{1+n})}}&\le C\|\mc{C}^{j}\|_{\HS^{\frac{n+1}{2}+\frac{\al}{2}}(\R^{1+n})}\\
&\le C\|\mc{C}^{j}\|_{\HS^{\frac{n+1}{2}+\al}(\R^{1+n})}^{\varrho}\|\mc{C}^{j}\|_{\HS^{-1}(\mathbb{R}^{1+n})}^{1-\varrho}\\
&\le C\left(\|\Lambda_1-\Lambda_2\|^{\frac{\theta}{2}}+\big|\log|\log\|\Lambda_1-\Lambda_2\||\big|^{-1}\right)^{1-\varrho}\\&\leq C\left(\|\Lambda_1-\Lambda_2\|^{\frac{\theta}{2}{(1-\varrho)}}+\big|\log|\log\|\Lambda_1-\Lambda_2\||\big|^{-1({1-\varrho})}\right),
\end{align}
where we have used \eqref{last3}.
Hence there exists $\mu_1,\mu_2\in(0,1)$ such that 
\begin{align}
\|\mc{C}^{j}\|_{\LS^\infty{(\mathbb{R}^{1+n})}}\le C\left(\|\Lambda_1-\Lambda_2\|^{\mu_1}+\big|\log|\log\|\Lambda_1-\Lambda_2\||\big|^{-\mu_2}\right).
\end{align}	
Taking maximum over $1\leq j\leq m$, we get
\begin{align}\label{LAST1}
\|\mc{C}\|_{\LS^\infty{(\mathbb{R}^{1+n})}}\le C\left(\|\Lambda_1-\Lambda_2\|^{\mu_1}+\big|\log|\log\|\Lambda_1-\Lambda_2\||\big|^{-\mu_2}\right).
\end{align}	
Similarly, one can get	
\begin{align}\label{bound: H1 norm of Cj}
\|\mc{C}^{j}\|_{\HS^{1}{(\mathbb{R}^{1+n})}}\le C\|\mc{C}^{j}\|_{H^{-1}(\mathbb{R}^{1+n})}^{1-\varrho}\le C\left(\|\Lambda_1-\Lambda_2\|^{\kappa_1}+\big|\log|\log\|\Lambda_1-\Lambda_2\||\big|^{-\kappa_2}\right),
\end{align}
for any $j\in\{1,2,\cdots,m\}$, and $ \kappa_1,\kappa_2\in(0,1)$.	The last estimate plays an important role during the establishment of the stability estimate for potential matrix $q$.
\subsection{Stability estimate for potential matrix \it{q}}\label{Stability estimate for potential matrix q}
This subsection is devoted to proving the stability estimate for matrix-valued potential $q$. To this end, we start by recalling the operator in
\eqref{IBVP for u (defined as differences of solution)}, that is $$\mathcal{L}_{\mathcal{A}^{(1)},\mathcal{B}^{(1)},q^{(1)}}\overrightarrow {u}(t,x) =(\mathcal{A}(t,x)\partial_t-\mathcal{B}(t,x)\cdot\nabla_x+\widetilde{\mathcal{A}}(t,x)+q(t,x))\overrightarrow{u}^{(2)}(t,x), $$
or, \begin{align}
    e^{-\lambda(t+x\cdot\og)}\mathcal{L}_{\mathcal{A}^{(1)},\mathcal{B}^{(1)},q^{(1)}}\overrightarrow {u}=   e^{-\lambda(t+x\cdot\og)} (\mathcal{A}(t,x)\partial_t-\mathcal{B}(t,x)\cdot\nabla_x+\widetilde{\mathcal{A}}(t,x)+q(t,x))\overrightarrow{u}^{(2)}.
\end{align}
Using the GO solution \eqref{GO solution for u_2 and v} and invoking \eqref{estimates for non homogeneous term for u}, we achieve
\begin{align}\label{carleman estimate for operator}
\|e^{-\lambda(t+x\cdot\og)}\mathcal{L}_{\mathcal{A}^{(1)},\mathcal{B}^{(1)},q^{(1)}}\overrightarrow {u}||_{\LS^2(Q)}^2\le  C\big(\ld^2\|\mathcal{A}\|_{\LS^\infty(Q)}^2+\ld^2\|\mathcal{B}\|_{\LS^\infty(Q)}^2+1\big)\|\overrightarrow{\phi}\|_{\HS^3(\R^n)}^2.
\end{align}
Also, for the notation convention, let us denote the right-hand side of the Carleman estimate by $\mathcal{Z}$, that is, 
\begin{align}
\mathcal{Z}:=\int_{Q}e^{-2\lambda(t+x\cdot\omega)}|\mathcal{L}_{\mathcal{A}^{(1)},\mathcal{B}^{(1)},q^{(1)}}\us(t,x)|^2dxdt
&+\int_{\Omega} e^{-2\lambda(T+x\cdot\omega)}\left(\lambda^3|\us(T,x)|^2+\ld|\nabla_x{\us(T,x)}|^2\right)dx\\&+\lambda\int\limits_{\Sigma_{-,\omega}}{ e^{-2\lambda(t+x\cdot\omega)}|\omega\cdot \nu(x)||\partial_{\nu}\us|^2}dS_x dt.   
\end{align}
Recalling \eqref{carleman estimate at time T} and \eqref{estimates on sigma-}, we observe that the second and third integrals in the previous expression have the following bounds: 
    \begin{align}
\int_{\Omega} e^{-2\lambda(T+x\cdot\omega)}\left(\lambda^3|\us(T,x)|^2+\ld|\nabla_x{\us(T,x)}|^2\right)dx \leq C e^{\beta\ld}\|\Lambda_1-\Lambda_2\|^2\|\overrightarrow{\phi}\|^2_{\HS^3(\R^n)},
\end{align}
and
\begin{align}
\begin{aligned}
\int\limits_{\Sigma_{-,\omega}}\lambda e^{-2\lambda(t+x\cdot\omega)}|\omega\cdot \nu(x)||\partial_{\nu}\us|^2dS_xdt\leq C e^{\beta\ld}\|\Lambda_1-\Lambda_2\|^2\|\overrightarrow{\phi}\|^2_{\HS^3(\R^n)}.
\end{aligned}
\end{align}
The above bounds together with \eqref{carleman estimate for operator} lead to   
\begin{align}\label{Revised estimate for RHS of carleman estimate}
\mc{Z}\le C\big(\ld^2\|\mathcal{A}\|_{\LS^\infty(Q)}^2+\ld^2\|\mathcal{B}\|_{\LS^\infty(Q)}^2+1+ e^{\beta\lambda}\|\Lambda_1-\Lambda_2\|^2\big)\|\overrightarrow{\phi}\|_{\HS^3(\mathbb{R}^n)}^2. 
\end{align}
Using \eqref{Revised estimate for RHS of carleman estimate}, we modify the estimate \eqref{final estimates with all matrix potential} as follows
\begin{align}
   \Bigg|\int\limits_Q \big(\mathcal{A}(t,x)\partial_t\overrightarrow{u}^{(2)}(t,x)-\mathcal{B}(t,x)\cdot\nabla_x\overrightarrow{u}^{(2)}(t,x)+(\widetilde{\mathcal{A}}(t,x)+q(t,x))\overrightarrow{u}^{(2)}(t,x)\big)\cdot\overline{\overrightarrow{v}(t,x)}\,dxdt\Bigg|\\
\leq C\|\overrightarrow{\phi}\|^2_{\HS^3(\mathbb{R}^n)}\left(\sqrt{\frac{\mc{Z}}{\lambda}} +e^{\beta\lambda}\|\Lambda_1-\Lambda_2\|\right).
\end{align}
On further simplification, we obtain
\begin{align}
    \bigg|\int_{Q}(\widetilde{\mathcal{A}}(t,x)+q(t,x))&\overrightarrow{u}^{(2)}(t,x)\big)\cdot\overline{\overrightarrow{v}(t,x)}\,dxdt\bigg|\\
&\le C\left(\ld\|\mathcal{A}\|_{\LS^\infty(Q)}+\ld\|\mathcal{B}\|_{\LS^\infty(Q)}+\frac{1}{\rt{\ld}}+ e^{\beta\lambda}\|\Lambda_1-\Lambda_2\|\right)\|\overrightarrow{\phi}\|_{\HS^3(\mathbb{R}^n)}^2,\nn\\
\end{align}
or,
\begin{align}
&\Big|\int_{Q}\sum_{\ell=1}^m\Big(\widetilde{A^\ell}(t,x)u^{(2)}_\ell(t,x)\overline{v_\ell(t,x)}\,dxdt+\sum_{k=1}^m q_{\ell k}(t,x)u^{(2)}_k(t,x)\overline{v_\ell(t,x)}\Big)dxdt\Big|\\
&\qquad\qquad\qquad\le C\left(\ld\|\mathcal{A}\|_{\LS^\infty(Q)}+\ld\|\mathcal{B}\|_{\LS^\infty(Q)}+\frac{1}{\rt{\ld}}+ e^{\beta\lambda}\|\Lambda_1-\Lambda_2\|\right)\|\overrightarrow{\phi}\|_{\HS^3(\mathbb{R}^n)}^2.
\end{align}
Substituting the GO solution \eqref{GO solution for u_2 and v} into the previous expression to get
\begin{align}\label{R1}
\begin{aligned}
&\Big|\int\limits_{Q}\sum_{\ell=1}^m \Big(\widetilde{A^\ell}(t,x)K^\ell K_*^{\ell}|\phi_\ell(x+t\omega)|^2e^{-\int_0^t (1,-\omega)\cdot\mathcal{C}^{\ell}(s,x+(t-s)\omega)\,ds}+\sum_{k=1}^m q_{\ell k}u^{(2)}_k(t,x)\overline{v_\ell(t,x)}\Big)dxdt\Big|\\
&\qquad\qquad\qquad\le C\left(\ld\|\mathcal{A}\|_{\LS^\infty(Q)}+\ld\|\mathcal{B}\|_{\LS^\infty(Q)}+\frac{1}{\rt{\ld}}+ e^{\beta\lambda}\|\Lambda_1-\Lambda_2\|\right)\|\overrightarrow{\phi}\|_{\HS^3(\mathbb{R}^n)}^2.
\end{aligned}
\end{align}
\subsubsection{Case: $i\ne j$}
We start by choosing constant vectors $\overrightarrow {K}$ and $\overrightarrow {K_*}$ introduced in \eqref{jth component of GO solution for u2} and \eqref{jth component of GO solution for v}, respectively.
For arbitrary but fixed $i,j\in\{1,2,\cdots,m\}$, we choose $\overrightarrow {K}$ and $\overrightarrow {K_*}\in \mathbb{R}^m$ as follows:
\begin{equation*}
    \overrightarrow {K}=\left( \delta_{jr} \right)^m_{r=1},\,\,\quad \overrightarrow {K_*}=\left( \delta_{ir}\right)^m_{r=1},
\end{equation*}
where $\delta_{kr}$ represents the Kronecker delta function for non-negative integers $k$ and $r$. Therefore, we have
\begin{align*}
&\left|\int_{Q}q_{ij}(t,x)u^{(2)}_j(t,x)\overline{v_i(t,x)}\Big)dxdt\right|\\&\qquad=\left|\int_{Q}q_{ij}(t,x)\phi_j(x+t\omega)\phi_i(x+t\omega)e^{-\int_0^t (1,-\omega)\cdot(\mathcal{C}^{(2),j}-\mathcal{C}^{(1),i})(s,x+(t-s)\omega)\,ds}dxdt\right|\\
&\qquad\le C\left(\ld\|\mathcal{A}\|_{\LS^\infty(Q)}+\ld\|\mathcal{B}\|_{\LS^\infty(Q)}+\frac{1}{\rt{\ld}}+ e^{\beta\lambda}\|\Lambda_1-\Lambda_2\|\right)\|\overrightarrow{\phi}\|_{\HS^3(\mathbb{R}^n)}^2.
\end{align*} 
Further, we observe that
\begin{align*}  
&\left|\int_{Q}q_{ij}(t,x)\phi_j(x+t\omega)\phi_i(x+t\omega)\ dxdt\right|\\
&\qquad\le \left|\int_{Q}q_{ij}(t,x)\phi_j(x+t\omega)\phi_i(x+t\omega)\left(e^{-\int_0^t (1,-\omega)\cdot(\mathcal{C}^{(2),j}-\mathcal{C}^{(1),i})(s,x+(t-s)\omega)\,ds}-1\right)dxdt\right|\\
& \qquad\qquad +\left|\int_{Q}q_{ij}(t,x)\phi_j(x+t\omega)\phi_i(x+t\omega)e^{-\int_0^t (1,-\omega)\cdot(\mathcal{C}^{(2),j}-\mathcal{C}^{(1),i})(s,x+(t-s)\omega)\,ds}dxdt\right|.
\end{align*}
To simplify the first term on the right-hand side of the previous expression, we use the 
mean value theorem and a priori bounds of vector potentials. Therefore, using \eqref{Def: extra condition}, we get
\begin{align}\label{M.V.T}
\begin{aligned}
&\big|e^{-\int_{0}^{t}(1,-\og)\cdot(\mathcal{C}^{(2),j}-\mathcal{C}^{(1),i})(s,x+(t-s)\omega)\,ds}-1\big|\le C\|\mathcal{C}^{(2),j}-\mathcal{C}^{(1),i}\pm\mathcal{C}^{(2),i}\|_{L^{\infty}(Q)}\le C \lVert \mathcal{C}\rVert_{\LS^{\infty}(Q)}.
\end{aligned}
\end{align}
Thus, we have
\begin{align*}  
&\left|\int_{Q}q_{ij}(t,x)\phi_j(x+t\omega)\phi_i(x+t\omega)\ dxdt\right|\\
&\qquad\le C\left(\ld\|\mathcal{A}\|_{\LS^\infty(Q)}+\ld\|\mathcal{B}\|_{\LS^\infty(Q)}+\frac{1}{\rt{\ld}}+ e^{\beta\lambda}\|\Lambda_1-\Lambda_2\|\right)\|\overrightarrow{\phi}\|_{\HS^3(\mathbb{R}^n)}^2.
\end{align*}
We set $\phi_i=\phi_j$, and perform the change of variables $y=x+t\omega$, which yields
\begin{align*}
&\left|\int_{0}^T\int_{\mathbb{R}^n}q_{ij}(t,y-t\omega)|\phi_i(y)|^2 \ dydt\right|\\
&\qquad\le C\left(\ld\|\mathcal{A}\|_{\LS^\infty(Q)}+\ld\|\mathcal{B}\|_{\LS^\infty(Q)}+\frac{1}{\rt{\ld}}+ e^{\beta\lambda}\|\Lambda_1-\Lambda_2\|\right)\|\overrightarrow{\phi}\|_{\HS^3(\mathbb{R}^n)}^2.
\end{align*}
By arguments analogous to those employed in Lemma \ref{existence of delta lambda0} and Corollary \ref{existence of a cone}, there exist $\gamma,\delta>0$ and an open cone $\mathcal{S}$ such that
\begin{align}\label{R} 
\begin{aligned}
\Big|\int_{0}^{T}q_{ij}(t,y-t\omega)dt\Big|& \m{ and,}\ \|\widehat{q_{ij}}\|_{L^\infty(\mc{S})}\\&\le C\left(\ld^{\gamma}\big(\|\mc{A}\|_{\LS^\infty(Q)}+\|\mathcal{B}\|_{\LS^\infty(Q)}\big)+\frac{1}{\ld^{\delta}}+ e^{\beta\lambda}\|\Lambda_1-\Lambda_2\|\right).
\end{aligned}
\end{align}
Moreover, note that for $\widetilde{q} := q + \widetilde{\mathcal{A}}$ (see notational convention \eqref{notation used in integral identity}), the equality $\widetilde{q} = q$ holds whenever $i \neq j$. Consequently, the preceding expression can be reformulated as
\begin{align}
\begin{aligned}
\Big|\int_{0}^{T}\widetilde{q_{ij}}(t,y-t\omega)dt\Big|& \m{ and,}\ \|\widehat{\widetilde{q_{ij}}}\|_{L^\infty(\mc{S})}\\&\le C\left(\ld^{\gamma}\big(\|\mc{A}\|_{\LS^\infty(Q)}+\|\mathcal{B}\|_{\LS^\infty(Q)}\big)+\frac{1}{\ld^{\delta}}+ e^{\beta\lambda}\|\Lambda_1-\Lambda_2\|\right).
\end{aligned}
\end{align}

\subsubsection{Case: $i=j$}
Let us denote $\widetilde{q}:= q+\widetilde{\mathcal{A}}$ and choose $ \overrightarrow {K}=\left( \delta_{jr} \right)^m_{r=1},~\overrightarrow {K_*}=\left( \delta_{jr}\right)^m_{r=1}$ in \eqref{R1}
then 
\begin{align}
\begin{aligned}
\Big|\int_{0}^{T}\widetilde{q_{ii}}(t,y-t\omega)dt\Big|& \m{ and,}\ \|\widehat{\widetilde{q_{ii}}}\|_{L^\infty(\mc{S})}\\&\le C\left(\ld^{\gamma}\big(\|\mc{A}\|_{\LS^\infty(Q)}+\|\mathcal{B}\|_{\LS^\infty(Q)}\big)+\frac{1}{\ld^{\delta}}+ e^{\beta\lambda}\|\Lambda_1-\Lambda_2\|\right).
\end{aligned}
\end{align}
where $\widetilde{q_{ii}}$ denotes the $(i,i)^{th}$ entry of $\widetilde{q}$. Also, we have
\begin{align}\label{bound: q in term of tilde q}
    \lVert q_{ij}\rVert_{L^{\infty}(\mathbb{R}^{1+n})}\leq\begin{cases}
        C\left(\lVert \widetilde{q_{ii}}\rVert_{L^{\infty}(\mathbb{R}^{1+n})}+\lVert \mathcal{C}^i\rVert_{H^1(\mathbb{R}^{1+n})}+\lVert \mathcal{C}^i\rVert_{L^{\infty}(\mathbb{R}^{1+n})}, 
    \right), & i= j,\\  C\lVert \widetilde{q_{ij}}\rVert_{L^{\infty}(\mathbb{R}^{1+n})}, & i\neq j.
    \end{cases} 
\end{align}
We first analyze $\widetilde{q_{ii}}$, subsequently, by invoking the relation \eqref{bound: q in term of tilde q}, we deduce the desired estimate.
Using the Vessella's analytic continuation result as done in \eqref{vassella2}, we obtain
\begin{align}
&\|\widehat{\widetilde{q_{ij}}}\|_{L^\infty(B(0,k))}\ \le Ce^{k(1-\theta)}\left(\ld^{\gamma}\big(\|\mc{A}\|_{L^\infty(Q)}+\|\mathcal{B}\|_{L^\infty(Q)}\big)+\frac{1}{\ld^{\delta}}+ e^{\beta\lambda}\|\Lambda_1-\Lambda_2\|\right)^{\theta},
\end{align}
for large $k$ and $1\leq i,j\leq m$.
Then, we have
\begin{align}\label{sc1}
\begin{aligned}
&\|\widetilde{q_{ij}}\|_{H^{-1}{(\R^{1+n})}}^{\frac{2}{\theta}}\\&\qquad
\le C\left(k^{\frac{n+1}{\theta}}e^{\frac{2k(1-\theta)}{\theta}}\left(\ld^{2\gamma}\big(\|\mc{A}\|^2_{L^\infty(Q)}+\|\mathcal{B}\|^2_{L^\infty(Q)}\big)+\frac{1}{\ld^{2\delta}}+e^{\beta\ld}\|\Lambda_1-\Lambda_2\|^2\right)+k^{-\frac{2}{\theta}}\right)
\end{aligned}
\end{align}
for large $k$.
 Repeating a similar argument as in $\eqref{last1}$, we choose $\ld=e^{\frac{k(1-\theta)}{\theta\delta}}k^{\frac{n+3}{2\theta\delta}}$. Next, we use the estimate \eqref{LAST1} in \eqref{sc1}, then there exists $\varrho>0$ such the right-hand side of \eqref{sc1} can be bounded by
\begin{align}
e^{\varrho k}\|\Lambda_1-\Lambda_2\|^{2\mu_1}+e^{\varrho k}\big|\log|\log\|\Lambda_1-\Lambda_2\||\big|^{-2\mu_2}+e^{e^{\varrho k}}\|\Lambda_1-\Lambda_2\|^2+k^{-\frac{2}{\theta}}.
\end{align}
Now, if we choose $k=\frac{\mu_2}{\varrho}\log\log\big|\log\|\Lambda_1-\Lambda_2\|\big|$ (provided $\|\Lambda_1-\Lambda_2\|$ is sufficiently small), the foregoing expression reduces to    
\begin{align}\label{sc2} 
\begin{aligned}
\|\Lambda_1-\Lambda_2\|^{2\mu_1}\big|\log|\log\|\Lambda_1-\Lambda_2\||\big|^{\mu_2}+{\big|\log|\log\|\Lambda_1-\Lambda_2\||\big|^{-\mu_2}}\\
+{{\|\Lambda_1-\Lambda_2\|^{2}\left|\log\|\Lambda_1-\Lambda_2\|\right|}}+\big|\log|\log\|\Lambda_1-\Lambda_2\||\big|^{-\frac{2}{\theta}}.
\end{aligned}
\end{align}
From above, we observe that the condition $\|\Lambda_1-\Lambda_2\|$ is sufficiently small, by itself, is inadequate to derive the desired bound. Consequently, we impose an additional assumption. Specifically, we assume that $\|\Lambda_1-\Lambda_2\|$ is small enough- say, that $\|\Lambda_1-\Lambda_2\|<c$ so that the choice of $k$ is valid and the following holds:
\begin{enumerate}
    \item $\|\Lambda_1-\Lambda_2\|^{2\mu_1}\le C(\log|\log\|\Lambda_1-\Lambda_2\||)^{-2\mu_2}.$
    \item $\left(\log|\log\|\Lambda_1-\Lambda_2\||\right)^{-\mu_2}\le (\log|\log\left|\log\|\Lambda_1-\Lambda_2\|\right||)^{-\mu_2}$.
    \item $\|\Lambda_1-\Lambda_2\|\left|\log\|\Lambda_1-\Lambda_2\|\right|\le C$.
\end{enumerate}
With these assumption,
we may dominate \eqref{sc2} by the following term \begin{align}\|\Lambda_1-\Lambda_2\|^{a_1}+\left(\log|\log\left|\log\|\Lambda_1-\Lambda_2\|\right||\right)^{-a_2}\m{ for some }a_1,a_2>0.\label{scalar1}
\end{align} 
For $\|\Lambda_1-\Lambda_2\|\ge c$, using the argument as in \eqref{last2}, we obtain a similar estimates as \eqref{scalar1}. Finally, invoking Sobolev embedding and logarithmic convexity of Sobolev norms, we conclude 
\begin{align*}
\|\widetilde{q_{ij}}\|_{L^\infty{(\R^{1+n})}}\le C\left(\|\Lambda_1-\Lambda_2\|^{\kappa_3}+\Big|\log\big|\log|\log\|\Lambda_1-\Lambda_2\||\big|\Big|^{-\kappa_4}\right)\quad \ \m{for some } \kappa_3,\kappa_4 >0,
\end{align*}
where $i,j\in\{1,2,3,\dots,m\}$.
The use of \eqref{LAST1}, \eqref{bound: H1 norm of Cj} and \eqref{bound: q in term of tilde q} in the above expression, yields 
\begin{align*}
\|q_{ij}\|_{L^\infty{(\R^{1+n})}}\le C\left(\|\Lambda_1-\Lambda_2\|^{\mu_3}+\Big|\log\big|\log|\log\|\Lambda_1-\Lambda_2\||\big|\Big|^{-\mu_4}\right)\quad \ \m{for some } \mu_3,\mu_4 >0,
\end{align*}
where $1\leq i,j\leq m$.
Taking maximum over $1\leq i,j\leq m$, we have 
\begin{align*}
\|q\|_{\LS^\infty{(\R^{1+n})}}\le C\left(\|\Lambda_1-\Lambda_2\|^{\mu_3}+\Big|\log\big|\log|\log\|\Lambda_1-\Lambda_2\||\big|\Big|^{-\mu_4}\right)\quad \ \m{for some } \mu_3,\mu_4 >0.
\end{align*}
This completes the proof.
 \section*{Acknowledgments}
 \noindent 
  Parveen Kumar is pleased to acknowledge CSIR, India, for the financial support provided through the Senior Research Fellowship (File No. 09/1005(19269)/2024-EMR-I). Parveen Kumar also thanks the FIST Programme of the Department of Science and Technology (DST), Government of India (Reference No. SR/FST/MS-I/2018/22(C)) for partial financial support. 
 The authors sincerely thank Manmohan Vashisth for his insightful remarks and valuable suggestions.
\bibliography{references}
\bibliographystyle{amsplain}
\end{document}